\numberwithin{equation}{section}
\numberwithin{figure}{section}
\newenvironment{lyxlist}[1]
{\begin{list}{}
{\settowidth{\labelwidth}{#1}
 \setlength{\leftmargin}{\labelwidth}
 \addtolength{\leftmargin}{\labelsep}
 }}
{\end{list}}
\def\subsection{\@startsection{subsection}{2}%
  \z@{.5\linespacing\@plus.7\linespacing}{.3\linespacing}%
  {\normalfont\bfseries}}
\theoremstyle{plain}
\newtheorem{lemma}{Lemma}[section]
\newtheorem{prop}[lemma]{Proposition}
\newtheorem{theorem}[lemma]{Theorem}
\newtheorem{corollary}[lemma]{Corollary}
\newtheorem{fact}[lemma]{Fact}
\newtheorem{claim}[lemma]{Claim}
\newtheorem*{titulemma*}{Titu's Lemma}
\newtheorem*{corollary*}{Corollary}
\theoremstyle{remark}
\newtheorem*{claim*}{\textmd{\textit{Claim}}}
\def\R{\mathbb R}
\def\Z{\mathbb Z}
\DeclareMathOperator{\Position}{Pos}
\DeclareMathOperator*{\argmax}{arg\,max}
\DeclareMathOperator{\IxSetOfVec}{IndexSet}
\DeclareMathOperator{\length}{Length}
\DeclareMathOperator{\maxLength}{MaxLength}
\newcommand{\nop}{}
\newcommand{\Ivecs}{\ensuremath{\mathbb{I}}}
\newcommand{\ixv}{\ensuremath{\iota}}
\newcommand{\IxSetNotContaining}[1]{\overline{\mspace{1mu}\mathrm{I}\mspace{1mu}}(#1)}
\newcommand{\IxSetContaining}[1]{\mathrm{I}(#1)}
\newcommand{\IvecsNotInvolving}[1]{\overline{\mspace{1mu}\mathbb{I}\mspace{1mu}}(#1)}
\newcommand{\IvecsAllInvolving}[1]{\mathbb{I}(#1)}
\newcommand{\IvecsEpsNotInvolving}[1]{\overline{\mathbb{I}_\epsilon}(#1)}
\newcommand{\IvecsEpsAllInvolving}[1]{\mathbb{I}_\epsilon(#1)}
\newcommand{\downmapsto}{\rotatebox[origin=c]{-90}{\scalebox{1.5}{$\longmapsto$}\!\!\!\!}\mkern2mu}
\newcommand{\wmapsto}[1]{\,\overset{#1}{\scalebox{1.5}{$\longmapsto$}}\,}
\begin{document}

\title{String-Averaging Algorithms for Convex Feasibility with Infinitely
Many Sets}

\author{T. Yung Kong}

\address{Computer Science Department, Queens College, City University of New
York, Flushing, NY 11367, USA}

\email{tkong@qc.cuny.edu}

\author{Homeira Pajoohesh}

\address{Mathematics Department, Medgar Evers College, City University of
New York, Brooklyn, NY 11225, USA}

\email{hpajoohesh@mec.cuny.edu}

\author{Gabor T. Herman}

\address{Computer Science Ph.D. Program, The Graduate Center, City University
of New York, New York, NY 10016, USA}

\email{gabortherman@yahoo.com}
\begin{abstract}
\noindent Algorithms for convex feasibility find or approximate a
point in the intersection of given closed convex sets. Typically there
are only finitely many convex sets, but the case of infinitely many
convex sets also has some applications. In this context, a \emph{string}
is a finite sequence of points each of which is obtained from the
previous point by considering one of the convex sets. In a \emph{string-averaging}
algorithm, an iterative step from a first point to a second point
computes a number of strings, all starting with the first point, and
then calculates a (weighted) average of those strings' end-points:
This average is that iterative step's second point, which is used
as the first point for the next iterative step. For string-averaging
algorithms based on strings in which each point either is the projection
of the previous point on one of the convex sets or is equal to the
previous point, we present theorems that provide answers to the following
question: ``How can the iterative steps be specified so that the
string-averaging algorithm generates a convergent sequence whose limit
lies in the intersection of the sets of a given convex feasibility
problem?'' This paper focuses on the case where the given collection
of convex sets is infinite, whereas prior work on the same question
that we are aware of has assumed the collection of convex sets is
finite (or has been applicable only to a small subset of the algorithms
we consider). The string-averaging algorithms that are shown to generate
a convergent sequence whose limit lies in the intersection are also
shown to be perturbation resilient, so they can be superiorized to
generate sequences that also converge to a point of the intersection
but in a manner that is superior with respect to some application-dependent
criterion.

\bigskip{}

\noindent Keywords: convex feasibility, iterative algorithm, convergence,
string-averaging, superiorization
\end{abstract}

\maketitle

\section{Introduction}

\label{sec:intro} 

The \textit{convex feasibility problem} is that of finding or approximating
a point that belongs to every member of a collection of closed convex
sets. This paper is concerned with the application of projection-based
\emph{string-averaging} (a methodology that will be further described
in sec.~\ref{subsec:Notation-and-Definitions}) to the convex feasibility
problem. Whereas most work on convex feasibility has assumed that
the collection of convex sets has only finitely many members,\footnote{\label{fn:A-notable-exception}A notable exception in the context
of our results is the paper \cite{Comb97b}, which studies a class
of iterative schemes called \emph{EMOPP}. It is shown in \cite{Comb97b}
that, under certain conditions, an EMOPP iterative scheme will converge
to a solution of the convex feasibility problem for a finite or countably
infinite collection of convex sets in a Hilbert space. However, most
of the string-averaging iterative schemes considered in the present
paper do not belong to the EMOPP class: Indeed, a string-averaging
iterative scheme cannot belong to the EMOPP class unless the ``strings''
it uses all have length 1 (i.e., its weight functions satisfy condition
H2 of sec.~\ref{subsec:Outline} with $m=1$) and its active averaging
weights are bounded away from 0. The results of \cite{Comb97b} include
results (for EMOPP iterative schemes) that are similar in nature to
our first and third convergence theorems, but no results akin to our
fourth or fifth convergence theorems. } our primary focus will instead be on the case where the collection
is (countably) infinite.

Typical practical instances of the convex feasibility problem are
inverse problems. The point we wish to find or approximate may, for
example, be a point in $\R^{n}$ whose coordinates are the values
of $n$ numerical attributes (sometimes referred to as ``model parameters'')
of our model of some physical object, and the convex sets to which
the point is assumed to belong may be determined by measurements taken
of this object (sometimes referred to as ``observed data''). In
this context, the ``forward'' problem is to use our model to predict
the values of the observed data from values of the $n$ model parameters.
But our concern is the inverse of that problem: Given the values of
the observed data, find model parameters that are consistent with
the data. This is a convex feasibility problem when the $n$ model
parameters are regarded as the coordinates of a point $p$ in $\R^{n}$
and each observed data value constrains that point $p$ to lie in
a certain closed convex subset of $\R^{n}$. For examples of such
convex feasibility problems, see the papers \cite{CDH10,KSP09}. Much
other work that relates to convex feasibility can be found in the
literature on inverse problems.\footnote{At the time of writing, searching for the topic ``convex feasibility''
on the IOPScience website returns 30 articles, of which 24 were published
in the journal \emph{Inverse Problems}; a Google Scholar search for
``convex feasibility'' \emph{AND} ``inverse problems'' returns
many hundreds of articles.}

As mentioned above, prior work on the convex feasibility problem has
for the most part assumed the collection of convex sets is finite.
Nevertheless, there has been some interest in convex feasibility problems
with infinitely many convex sets. Indeed, the case of infinitely many
sets is considered in some early seminal papers on the subject published
more than two decades ago. For example, the 1967 paper of Gurin, Polyak
and Raik \cite{GPR67} begins with: ``MANY mathematical and applied
problems can be reduced to finding some common point of a system (finite
or infinite) of convex sets.'' Other examples are two 1997 papers
by Combettes \cite{Comb97,Comb97b}: The paper \cite{Comb97b} was
mentioned in footnote~\ref{fn:A-notable-exception}, while the paper
\cite{Comb97} specifies that the collection of convex sets in its
application of convex feasibility to image recovery may be ``finite
or countably infinite''. The usefulness of the infinite case is demonstrated
in \cite{YaOg04} by showing its applicability to set theoretic adaptive
filtering schemes for nonstationary random processes. A rather more
recent publication is \cite{Nedi10}, in which the iterative algorithm
for finding a point in the intersection of (infinitely many) given
closed convex sets selects the $k+1$st iterate by moving from the
$k$th iterate in the direction of the latter's projection on the
$j$th convex set, where $j$ is a stochastically selected index.
In a broader context, there has been significant interest in algorithms
that converge to common fixed points of infinite families of operators
satisfying various conditions. Results relating to such algorithms
are presented in, e.g., \cite{AlCe05,BaRZ18,BuRZ08,NiAb15,NiAb17,PhSu13,YYZ07}.

The case of infinitely many sets is useful in applications where there
is a potential infinity of samples or measurements, each one of which
gives rise to a convex set that contains the point we wish to recover.
For example, in \cite{BlHe06} the problem of locating an acoustic
source in a sensor network is formulated as a convex feasibility problem
and necessary and sufficient conditions are given under which, when
the number of samples increases to infinity, the convex feasibility
problem has a unique solution at the true source location. In \cite{KSP09},
it is shown that a particular algorithm of the kind we consider in
the present paper is applicable to the problem of estimating the orientation
distribution function (used to characterize crystallographic texture)
from diffraction data. In one experiment discussed in \cite{KSP09},
the algorithm is used to solve a convex feasibility problem in $\R^{1\thinspace372\thinspace000\thinspace000}$
in which the number of convex sets is potentially infinite; each convex
set is the set of points that satisfy a randomly generated equation,
and an iterative step is carried out as soon as a new equation is
available. The result reported in the paper for that experiment is
the one obtained after $1\thinspace000\thinspace000\thinspace000$
iterations. While our theorems have no immediate application to the
specific algorithm of \cite{KSP09}, the reported work demonstrates
that the algorithms we consider in the present paper can be used to
provide solutions even to enormously large problems.

\subsection{Notation and Definitions\label{subsec:Notation-and-Definitions}}

Let $C_{1},C_{2},C_{3},\dots$ be an infinite sequence of closed convex
subsets of $\R^{n}$ (where $n$ is an arbitrary but fixed positive
integer) such that $\bigcap_{i=1}^{\infty}C_{i}$ is nonempty and
at least one of the sets $C_{i}$ is not $\R^{n}$. We write $\mathcal{C}$
to denote the collection of all the members of the sequence $(C_{i})_{i=1}^{\infty}$.
Thus $\mathcal{C=}\{C_{i}\mid1\leq i<\infty\}$ and $\R^{n}\supsetneq\bigcap\mathcal{C}=\bigcap_{i=1}^{\infty}C_{i}\neq\emptyset$.
We use the term \emph{index} to mean a positive integer that should
be thought of as the index of one of the $C_{i}$s.

We write $\|\bm{v}\|$ to denote the Euclidean norm of a vector $\bm{v}$,
and for any $q\in\R^{n}$ and any nonempty set $Z\subseteq\R^{n}$
we write $d(q,Z)$ to denote the Euclidean distance between $q$ and
$Z$; thus $d(q,Z)=\inf_{z\in Z}\|q-z\|$.

Since every member of $\mathcal{C}$ is closed and convex, for each
$q\in\R^{n}$ and each index $j$ there will be a unique point $c\in C_{j}$
that minimizes $\|c-q\|$; we define $P_{j}:\R^{n}\rightarrow C_{j}$
to be the function that maps $q$ to that point of $C_{j}$. As a
simple example, if $n=2$ and $C_{6}$ is the $y$-axis in $\R^{n}=\R^{2}$,
then $P_{6}$ maps each point $(x,y)\in\R^{2}$ to the point $(0,y)\in C_{6}$.
We call $P_{j}$ a \emph{projection} operator. Note that $P_{j}(q)=q$
if and only if $q\in C_{j}$; an important special case of this is
that if $C_{i}=\R^{n}$ then $P_{i}$ is the identity map.

Let $\Ivecs$ be the set of all nonempty finite sequences of indices.
The members of $\Ivecs$ will be called \emph{index vectors}. Note
that $(i)$ is an index vector for every index $i$. If $\ixv=(\ixv_{1},\ixv_{2},\dots,\ixv_{m})$
is any index vector, then the integer $m$ will be denoted by $\length(\ixv)$
and the set of indices $\{\ixv_{1},\ixv_{2},\dots,\ixv_{m}\}$ that
occur in $\ixv$ will be denoted by $\IxSetOfVec(\ixv)$. The cardinality
of the set $\IxSetOfVec(\ixv)$ cannot exceed $\length(\ixv)$, and
is exactly $\length(\ixv)$ if and only if no two of the indices $\ixv_{1}$,
$\ixv_{2}$, \dots , $\ixv_{m}$ are equal. 

For any index vector $\ixv=(\ixv_{1},\ixv_{2},\dots,\ixv_{m})$ we
write $P[\ixv]$ to denote the operator $P_{\ixv_{m}}\circ P_{\ixv_{m-1}}\circ\dots\circ P_{\ixv_{1}}$;
thus $P[\ixv](x)=P_{\ixv_{m}}(P_{\ixv_{m-1}}(\dots(P_{\ixv_{1}}(x))\dots))\in C_{\ixv_{m}}$
for all $x\in\R^{n}$. 

The results of this paper relate to the following question: 
\begin{itemize}
\item How can we use operators of the form $P[\ixv]$, or weighted averages
of such operators, to iteratively generate a convergent sequence $x^{0},x^{1},x^{2},\dots\in\R^{n}$
whose limit lies in every member of $\mathcal{C}$?
\end{itemize}
Here $x^{0}$ is a user-specified ``seed'' point (which may be any
point in $\R^{n}$), and the points $x^{1},x^{2},\dots$ are to be
computed one at a time, with $x^{k+1}$ being computed from the preceding
point $x^{k}$ for $k=0,1,2,\dots$. Each point $x^{k}$ will be called
an \emph{iterate}, and the process of computing $x^{k+1}$ from $x^{k}$
will be referred to as \emph{iteration $k$}. In this context the
integer $k\geq0$ will be called the \emph{iteration number}. 

Our goal is to prove five convergence theorems. The first three of
these theorems will concern sequences $x^{0},x^{1},x^{2},\dots$ in
which each iterate $x^{k+1}$ is a weighted average of some finite
subset of the set of points $\{P[\ixv](x^{k})\mid\ixv\in\Ivecs\}$.
By this we mean that the iterates satisfy 
\begin{equation}
{\textstyle {x^{k+1}=\sum_{\ixv\in\Ivecs}w^{k}(\ixv)P[\ixv](x^{k})}\qquad\text{for all \ensuremath{k\geq0}}}\label{alg}
\end{equation}
for some functions $w^{0},w^{1},w^{2},\dots:\Ivecs\rightarrow[0,1]$
such that the following are true when $w$ is any $w^{k}$: 
\begin{lyxlist}{00.000.}
\item [{\qquad{}A.}] $w$ has finite support\textemdash i.e., $\{\ixv\in\Ivecs\mid w(\ixv)\neq0\}$
is a finite set. 
\item [{\qquad{}B.}] $\sum_{\ixv\in\Ivecs}w(\ixv)=1$. 
\end{lyxlist}
We will think of the sequence $x^{0},x^{1},x^{2},\dots$ defined by
(\ref{alg}) as being generated by a \textit{string-averaging }\textit{\emph{algorithm}}.
Such algorithms were first proposed in \cite{CEH01}, but only for
the case of finitely many convex sets. In this context, a \emph{string}
is a finite sequence of points, each one of which is obtained from
the previous point by considering exactly one of the convex sets.
A string-averaging algorithm computes an iterate $x^{k+1}$ by first
computing a finite number of strings, each starting with the preceding
iterate $x^{k}$, and then calculates a weighted average of the last
points of those strings; this weighted average is taken as $x^{k+1}$.
Although \cite{CEH01} did not consider the possibility of using different
weighted averaging operators at different iterations, other works
have commonly allowed this \cite{BaRZ18,CeZa13,CeZa15,Comb97b,NiAb15,ReZa16},
just as we will. Indeed, using the same weighted averaging operator
at every iteration would be unsatisfactory when the collection $\mathcal{C}$
is infinite, as it would ignore all but finitely many of the members
of $\mathcal{C}$.

Any function $w:\Ivecs\rightarrow[0,1]$ that satisfies conditions
A and B will be called a \emph{weight function} on $\Ivecs$ or just
a \emph{weight function}. If $w$ is a weight function and $\ixv$
an index vector such that $w(\ixv)>0$, we say $w$ \emph{uses} $\ixv$
and, for each index $i\in\IxSetOfVec(\ixv)$, we similarly say that
$w$ \emph{uses }$i$; we also refer to each member of the set $\{w(\ixv)\mid\ixv\in\Ivecs\}\setminus\{0\}$
as a \emph{weight} or an \emph{averaging weight }that is\emph{ used}
by $w$. In the context of equation (\ref{alg}), we call $w^{k}$
the weight function used at iteration $k$, and say iteration $k$
\emph{uses} an index vector $\ixv$, an index $i$, or a weight $\mathsf{w}$
if $w^{k}$ uses $\ixv$, $i$, or $\mathsf{w}$. If $k$ and $x^{k}$
are such that $w^{k}(\ixv)=\mathsf{w}>0$ and $P[\ixv]x^{k}\neq x^{k}$
for some index vector $\ixv$, then we say the weight $\mathsf{w}$
is \emph{active }at iteration $k$.

We do not assume the sets $C_{1},C_{2},C_{3},\dots$ are all known
in advance; they may, for example, be presented to the algorithm one
at a time. Similarly, we do not assume the weight functions $w^{0},w^{1},w^{2},\dots$
are fixed in advance. To compute $x^{k+1}$ from $x^{k}$ an algorithm
might, for example, compute $P[\ixv](x^{k})$ for various index vectors
$\ixv$ and then use the results to decide what weight function $w^{k}$
to use at iteration $k$. Moreover, the choice of $w^{k}$ need not
be deterministic.

Before describing our theorems in more detail, we mention two noteworthy
special cases of our results. The first special case, which we will
call the \emph{finite case}, is that in which there are only finitely
many indices $i$ for which $C_{i}\neq\R^{n}$. In the finite case
our convergence theorems can be understood as results about projection-based
string-averaging algorithms for a finite sequence of sets $C_{1},\dots,C_{K}$.

The second special case is that in which $C_{1}=\R^{n}$ and $w^{k}((1))>0$
for at least one iteration number $k$. In this special case let us
write $\lambda_{k}=1-w^{k}((1))$ for every iteration number $k$,
so that $\lambda_{k}=\sum_{\ixv\in\Ivecs\setminus\{(1)\}}w^{k}(\ixv)$.
Then our first three theorems may be construed as convergence theorems
for \emph{under-relaxed} versions of algorithms that generate sequences
specified by (\ref{alg}); $\lambda_{k}$ is the relaxation parameter
at iteration $k$. This is because we can rewrite (\ref{alg}) as
\[
{\textstyle {x^{k+1}=x^{k}+\lambda_{k}\left((\sum_{\ixv\in\Ivecs}\omega^{k}(\ixv)P[\ixv](x^{k}))-x^{k}\right)}\qquad\text{for all \ensuremath{k\geq0}},}
\]
where $\omega^{k}:\Ivecs\rightarrow[0,1]$ is a weight function such
that $\lambda_{k}\omega^{k}(\ixv)=w^{k}(\ixv)$ for $\ixv\in\Ivecs\setminus\{(1)\}$
and $\omega^{k}((1))=0$.

\subsection{Outline of the Main Results\label{subsec:Outline}}

Our first three theorems imply that certain conditions (on $\mathcal{C}=\{C_{i}\mid1\leq i<\infty\}$
or on the weight functions $w^{0},w^{1},w^{2},\dots$) are sufficient
for the sequence $x^{1},x^{2},x^{3}\dots$ defined by equation (\ref{alg})
to converge to a point in $\bigcap\mathcal{C}$ (regardless of what
point is chosen as the seed point $x^{0}$). 

In the finite case, the second and third of these theorems are generalizations
of the previously known fact that $x^{1},x^{2},x^{3}\dots$ converges
to a point in $\bigcap\mathcal{C}$ if the weight functions satisfy
the following three additional hypotheses (which are \emph{not} assumed
in this paper):

\begin{lyxlist}{00......}
\item [{\quad{}H1.}] \noindent There is a positive lower bound on the
nonzero averaging weights used by $(w^{k})_{k=0}^{\infty}$\textemdash i.e.,
there is some $\epsilon>0$ such that $w^{k}(\ixv)>\epsilon$ for
every $k\geq0$ and every $\ixv\in\Ivecs$ such that $w^{k}(\ixv)\neq0$.\smallskip{}
\item [{\quad{}H2.}] There is an upper bound on the lengths of all the
index vectors used by $(w^{k})_{k=0}^{\infty}$\textemdash i.e., there
is an integer $m$ such that, for every $k\geq0$, if $\ixv\in\Ivecs$
and $\length(\ixv)>m$ then $w^{k}(\ixv)=0$. \smallskip{}
\item [{\quad{}H3.}] There is a positive integer $s$ such that each index
$i$ for which $C_{i}\neq\R^{n}$ is used by at least one member of
every $s$ consecutive members of the sequence $(w^{k})_{k=0}^{\infty}$\textemdash i.e.,
there is a positive integer $s$ such that ${\bigcup_{k=\kappa}^{\kappa+s-1}\{\IxSetOfVec(\ixv)\mid\ixv\in\Ivecs\text{\,\ and\ \,}w^{k}(\ixv)>0\}\supseteq\{i\mid C_{i}\neq\R^{n}\}}$
for every integer $\kappa\geq0$. 
\end{lyxlist}
That $x^{1},x^{2},x^{3}\dots$ converges to a point in $\bigcap\mathcal{C}$
in the finite case if H1 \textendash{} H3 are all satisfied follows
from \cite[Thm. 4.1]{ReZa16}; under the additional hypothesis that
H3 holds with $s=1$, it also follows from \cite[Thm. 12]{CeZa13}.
Moreover, \cite[Prop. 3.8]{NiAb15} implies that in the finite case
$x^{1},x^{2},x^{3}\dots$ converges to a point in $\bigcap\mathcal{C}$
whenever there are iteration numbers $n_{0}<n_{1}<n_{2}<\dots$ such
that, if we replaced $w^{k}$ with $w^{n_{k}}$ in H1~\textendash ~H3,
then H1 and H2 would hold and H3 would hold with $s=1$.

But while the hypotheses H1 \textendash{} H3 may seem quite mild in
the finite case, when there are infinitely many indices $i$ for which
$C_{i}\neq\R^{n}$ the hypothesis H3 cannot be satisfied\textemdash as
each iteration uses only finitely many indices\textemdash and it would
be quite restrictive to assume H1 or H2. (Hypothesis H1 would exclude
the algorithm described in Example~1 of sec.~\ref{subsec:Examples},
while H2 would exclude the algorithms described in Examples 2 and
3 of sec.~\ref{subsec:Examples}.) Accordingly, whereas other authors
have frequently assumed one or more of H1 \textendash{} H3, none of
these hypotheses will be assumed in this paper.

Our first convergence theorem assumes instead that $\bigcap\mathcal{C}$
is an $n$-dimensional subset of $\R^{n}$\textemdash i.e., $\bigcap\mathcal{C}$
contains an $n$-dimensional simplex. Under this hypothesis it says
that, for any weight functions $w^{0},w^{1},w^{2},\dots$ and any
seed point $x^{0}$, the sequence $x^{1},x^{2},x^{3}\dots$ defined
by equation (\ref{alg}) converges, and its limit lies in every $C_{i}$
for which the sum (over all iterations) of the weights of the index
vectors that contain index $i$ is infinite. In particular, $x^{1},x^{2},x^{3}\dots$
must converge to a point in $\bigcap\mathcal{C}$ if the weight functions
satisfy the following condition: For every index $i$ such that $C_{i}\neq\R^{n}$,
the sum (over all iterations) of the weights of the index vectors
that contain index $i$ is infinite. (When H1 holds and H2 holds with
$m=1$, this fact is a special case of \cite[Thm. 5.3(i)]{Comb97b}.)

The conclusions of the first convergence theorem do not always hold
when the hypothesis that $\bigcap\mathcal{C}$ is an $n$-dimensional
subset of $\R^{n}$ is not satisfied. However, our second convergence
theorem gives sufficient conditions on $w^{0},w^{1},w^{2},\dots$
for $x^{1},x^{2},x^{3},\dots$ to converge to a point in $\bigcap\mathcal{C}$
even if the dimension of $\bigcap\mathcal{C}$ may be less than $n$.
These sufficient conditions may be thought of as a greatly weakened
version of the conditions H1 \textendash{} H3 above. We mention that
the convergence of standard methods due to Cimmino \cite[Algorithm 2.1]{CeEl02}
(assuming the relaxation parameters $\lambda_{k}$ are all in $(0,1]$
and satisfy $\sum_{k=0}^{\infty}\lambda_{k}=\infty$) and to Kaczmarz
\cite{Kacz37} for solving finite systems of linear equations follows
quite easily from the second convergence theorem. 

The convergence of these standard methods will follow even more easily
from our third convergence theorem, which assumes that the collection
$\mathcal{C}$ satisfies the following additional condition:
\begin{lyxlist}{00.00.0000}
\item [{$\qquad\qquad\qquad\qquad\quad\textbf{UASC}(\mathcal{C})$:}] $d\left(p,\bigcup\{C\in\mathcal{C}\mid p\not\in C\}\right)>0$
for all $p\in\R^{n}\setminus\bigcap\mathcal{C}$. 
\end{lyxlist}
The condition $\textbf{UASC}(\mathcal{C})$ is so called because it
holds just if the collection $\mathcal{C}$ has the property that
the \textbf{\textit{u}}nion of \textbf{\textit{a}}ny \textbf{\textit{s}}ubcollection
is \textbf{\textit{c}}losed. It is plain that $\textbf{UASC}(\mathcal{C})$
is always satisfied if $\mathcal{C}$ has only finitely many members,
and this is probably the case of greatest practical interest.

However, $\textbf{UASC}(\mathcal{C})$ may hold even if $\mathcal{C}$
has infinitely many members. For example, $\textbf{UASC}(\mathcal{C})$
holds if the $C$s form a descending chain in the sense that $C_{i}\supseteq C_{i+1}$
for every $i$. $\textbf{UASC}(\mathcal{C})$ also holds if each $C_{i}$
is the closed triangle in $\R^{2}$ whose vertices are $(-1,0)$,
$(1,0)$, and $(\cos\pi/2^{i},\sin\pi/2^{i})$. It is worth noting
that if $\mathcal{C}$ and $\mathcal{C}'$ are collections such that
$\textbf{UASC}(\mathcal{C})$ and $\textbf{UASC}(\mathcal{C}')$ hold,
then $\textbf{UASC}(\mathcal{C}\cup\mathcal{C}')$ holds too.

Our third convergence theorem says that if $\textbf{UASC}(\mathcal{C})$
holds then the conclusion of the first convergence theorem holds regardless
of the dimensionality of $\bigcap\mathcal{C}$: For any weight functions
$w^{0},w^{1},w^{2},\dots$ and any seed point $x^{0}$, the sequence
$x^{1},x^{2},x^{3}\dots$ defined by equation (\ref{alg}) converges,
and its limit lies in every $C_{i}$ for which the sum (over all iterations)
of the weights of the index vectors that contain index $i$ is infinite.
In particular, $x^{1},x^{2},x^{3}\dots$ must converge to a point
in $\bigcap\mathcal{C}$ if $\textbf{UASC}(\mathcal{C})$ holds and
there is no index $i$ such that $C_{i}\neq\R^{n}$ for which the
sum (over all iterations) of the weights of the index vectors that
contain index $i$ is finite. (When H1 holds and H2 holds with $m=1$,
and $\mathcal{C}$ has only finitely many members, this fact is a
special case of \cite[Thm. 5.3(v)]{Comb97b}.)

Our fourth convergence theorem concerns ``perturbed'' versions of
sequences $x^{0},x^{1},x^{2},\dots$ that satisfy equation (\ref{alg}).
More precisely, the theorem relates to sequences $x_{*}^{0},x_{*}^{1},x_{*}^{2},\dots\in\R^{n}$
that satisfy 
\begin{equation}
{\textstyle {x_{*}^{k+1}=\left(\sum_{\ixv\in\Ivecs}w^{k}(\ixv)P[\ixv](x_{*}^{k})\right)+\bm{v}^{k}}\qquad\text{for all \ensuremath{k\geq0}}}\label{algP}
\end{equation}
for some weight functions $w^{0},w^{1},w^{2},\dots:\Ivecs\rightarrow[0,1]$,
where each $\bm{v}^{k}$ is a vector in $\R^{n}$ that will be called
the \emph{perturbation vector} at iteration $k$. 

The theorem makes two assertions which together say that, if the series
$\sum_{i=0}^{\infty}\|\bm{v}^{k}\|$ converges, then the first three
convergence theorems remain valid if we substitute an arbitrary sequence
$x_{*}^{0},x_{*}^{1},x_{*}^{2},\dots$ that satisfies (\ref{algP})
for the sequence $x^{0},x^{1},x^{2},\dots$. If for every index $i$
such that $C_{i}\neq\R^{n}$ the sum (over all iterations) of the
weights of the index vectors that contain index $i$ is infinite,
and the hypotheses of one of the first three convergence theorems
are satisfied, then the fourth convergence theorem tells us that an
algorithm based on (\ref{alg}) has a property that is commonly referred
to as (\emph{bounded})\emph{ perturbation resilience}\textemdash see,
e.g., \cite{BaRZ18,CDH10,CeZa13,NiAb15,NiAb17,ReZa16}\textemdash with
respect to the problem of finding a point in $\bigcap\mathcal{C}$.
It follows that such algorithms are candidates for \emph{superiorization}
\cite{Cens15,CHJ17}, a methodology for improving the efficacy of
algorithms whose convergence is resilient to certain kinds of perturbation.

In the finite case, it is a previously known fact that our fourth
convergence theorem holds when the hypotheses H1 \textendash{} H3
above are all satisfied: That result follows from \cite[Thm. 4.5]{ReZa16};
under the additional hypothesis that H3 holds with $s=1$, it follows
from \cite[Thm. 12]{CeZa13} as well. Under the hypotheses stated
above in connection with \cite[Prop. 3.8]{NiAb15}, the result also
follows from that proposition. But we are mainly interested in the
case where $\mathcal{C}$ has infinitely many members, and so we do
not assume H1 \textendash{} H3.

Our fifth and last convergence theorem makes use of operators $P_{j}^{\epsilon}:\R^{n}\rightarrow\R^{n}$
that are defined for every $\epsilon>0$ and every index $j$ as follows:
\[
P_{j}^{\epsilon}(x)=\begin{cases}
P_{j}(x) & \text{if \ensuremath{\|x-P_{j}(x)\|\geq\epsilon}}\\
x & \text{if \ensuremath{\|x-P_{j}(x)\|<\epsilon}}
\end{cases}
\]

For any index vector $\ixv=(\ixv_{1},\ixv_{2},\dots,\ixv_{m})$, just
as $P[\ixv]$ denotes the composite operator $P_{\ixv_{m}}\circ P_{\ixv_{m-1}}\circ\dots\circ P_{\ixv_{1}}$,
we write $P^{\epsilon}[\ixv]$ to denote the operator $P_{\ixv_{m}}^{\epsilon}\circ P_{\ixv_{m-1}}^{\epsilon}\circ\dots\circ P_{\ixv_{1}}^{\epsilon}$.
The fifth convergence theorem deals with sequences that are generated
from a seed point in a way that is analogous to (\ref{algP}) but
with $P^{\epsilon}[\ixv]$ in place of $P[\ixv]$: It deals with sequences
$x_{\epsilon}^{0},x_{\epsilon}^{1},x_{\epsilon}^{2},\dots$ that satisfy
the condition 
\begin{equation}
{\textstyle {x_{\epsilon}^{k+1}=\left(\sum_{\ixv\in\Ivecs}w^{k}(\ixv)P^{\epsilon}[\ixv](x_{\epsilon}^{k})\right)+\bm{v}^{k}}\qquad\text{for all \ensuremath{k\geq0}}}\label{algEpsPerturb}
\end{equation}
for some weight functions $w^{0},w^{1},w^{2},\dots:\Ivecs\rightarrow[0,1]$
and perturbation vectors $\bm{v}^{0},\bm{v}^{1},\bm{v}^{2},\dots\in\R^{n}$.

The theorem says that if $\sum_{k=0}^{\infty}\|\bm{v}^{k}\|<\infty$
then, for any sequence of weight functions $w^{0},w^{1},w^{2},\dots$
and any seed point $x_{\epsilon}^{0}$, the sequence $x_{\epsilon}^{1},x_{\epsilon}^{2},x_{\epsilon}^{3},\dots$
defined by (\ref{algEpsPerturb}) converges, and its limit is within
distance $\epsilon$ of every $C_{i}$ for which the sum (over all
iterations) of the weights of the index vectors that contain index
$i$ is infinite. This is true regardless of the dimensionality of
$\bigcap\mathcal{C}$ and whether or not $\textbf{UASC}(\mathcal{C})$
holds.

Let us now assume that for every index $i$ such that $C_{i}\neq\R^{n}$
the sum (over all iterations) of the weights of the index vectors
that contain index $i$ is infinite. Then it follows from this fifth
convergence theorem that, for any positive integers $m$ and $N$
and any seed point $y^{0}\in\R^{n}$, we can compute successive points
in the sequence defined by (\ref{algEpsPerturb}) with some $\epsilon<1/N$
and $x_{\epsilon}^{0}=y^{0}$ until we find a point $y^{1}$ such
that $\max_{1\leq i\leq m}d(y^{1},C_{i})\leq1/N$. We can repeat this
process with $y^{1}$ as seed point (instead of $y^{0}$), new perturbation
vectors, and larger values of $m$ and $N$ to find a point $y^{2}$,
which we use as the next seed point. Repeating this process with larger
and larger values of $m$ and $N$, we obtain a sequence $y^{3},y^{4},y^{5},\dots$
that must in fact converge to a point that lies in every member of
$\mathcal{C}$, assuming the sum of the magnitudes of the perturbation
vectors used in computing this sequence is finite.\footnote{If $l_{m}$ is the number of iterations of (\ref{algEpsPerturb})
that are carried out to compute $y^{m+1}$ from $y^{m}$, and $(\bm{v}_{m}^{k})_{k=0}^{l_{m}-1}$
is the sequence of perturbation vectors that are used by those $l_{m}$
iterations, then what we are assuming here is that $\sum_{m=0}^{\infty}\sum_{k=0}^{l_{m}-1}\|\bm{v}_{m}^{k}\|<\infty$. }

\section{First Convergence Theorem}

In the rest of this paper, $w^{0},w^{1},w^{2},\dots$ will denote
arbitrary weight functions on $\Ivecs$, and $x^{0},x^{1},x^{2},\dots$
points in $\R^{n}$ that satisfy equation (\ref{alg}) for all iteration
numbers $k\geq0$. For each index $i$ the set of all the index vectors
in which $i$ occurs will be denoted by $\Ivecs\langle i\rangle$.
Thus $\Ivecs\langle i\rangle=\{\ixv\in\Ivecs\mid i\in\IxSetOfVec(\ixv)\}$.
For example, $(3,1,4,3,1,3,7)\in\Ivecs\langle i\rangle$ just if $i=1,3,4$,
or $7$. Note that $\Ivecs\langle i\rangle$ is an infinite set for
every index $i$.

Recall from the Introduction that $\mathcal{C}=\{C_{i}\mid1\leq i<\infty\}$,
where each $C_{i}$ is a closed convex subset of $\R^{n}$, $\bigcap_{i=1}^{\infty}C_{i}=\bigcap\mathcal{C}\neq\emptyset$,
and there is at least one index $i$ for which $C_{i}\neq\R^{n}$.
The main theorem of this section is:

\begin{theorem}[First Convergence Theorem] \label{firstCT} Suppose
$\bigcap\mathcal{C}$ is an $n$-dimensional subset of $\R^{n}$ (i.e.,
$\bigcap\mathcal{C}$ contains an $n$-dimensional simplex). Then
the sequence $x^{0},x^{1},x^{2},\dots$ converges, and its limit lies
in $C_{j}$ for every index $j$ such that $\sum_{k=0}^{\infty}\sum_{\ixv\in\Ivecs\langle j\rangle}w^{k}(\ixv)=\infty$.
In particular, the sequence $x^{0},x^{1},x^{2},\dots$ converges to
a point in $\bigcap\mathcal{C}$ if $\sum_{k=0}^{\infty}\sum_{\ixv\in\Ivecs\langle j\rangle}w^{k}(\ixv)=\infty$
for every index $j$ such that $C_{j}\neq\R^{n}$. \end{theorem}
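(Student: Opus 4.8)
The plan is to combine a Fejér‑monotonicity argument with a \emph{linear} (rather than quadratic) distance‑decrease estimate that exploits the full‑dimensionality hypothesis twice. First I would record the two standard facts about the metric projection $P_i$ onto a closed convex set: for every $z\in\bigcap\mathcal C$ and every $y\in\R^n$ one has $\|P_i(y)-z\|\le\|y-z\|$, and hence, since $P[\ixv]$ is a composition of such maps and $\|\cdot-z\|^2$ is convex, $\|x^{k+1}-z\|^2\le\sum_{\ixv\in\Ivecs}w^k(\ixv)\|P[\ixv](x^k)-z\|^2\le\|x^k-z\|^2$. Thus for every $z\in\bigcap\mathcal C$ the sequence $(\|x^k-z\|)_{k\ge0}$ is nonincreasing; in particular $(x^k)$ is bounded and $\rho(z):=\lim_k\|x^k-z\|$ exists.

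Next I would deduce \emph{convergence} of $(x^k)$ from the hypothesis alone. Let $S\subseteq\bigcap\mathcal C$ be an $n$-dimensional simplex with vertices $z_0,\dots,z_n$. Any accumulation point $\bar x$ of the bounded sequence $(x^k)$ satisfies $\|\bar x-z_i\|=\rho(z_i)$ for $i=0,\dots,n$. Subtracting the equation for index $0$ from that for index $i$ turns $\|x-z_i\|^2=\rho(z_i)^2$ into a \emph{linear} equation in $x$; since $z_1-z_0,\dots,z_n-z_0$ are linearly independent (the simplex is $n$-dimensional) these $n$ linear equations have a unique solution. Hence $(x^k)$ has exactly one accumulation point $x^\infty$, and being bounded it converges to $x^\infty$.

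It then remains to show $x^\infty\in C_j$ whenever $\sum_{k=0}^\infty\sum_{\ixv\in\Ivecs\langle j\rangle}w^k(\ixv)=\infty$, and this is the step I expect to be the crux. Since $S$ is $n$-dimensional I may pick $z^*\in\mathrm{int}\,S$ and $r>0$ with $\overline{B}(z^*,r)\subseteq S\subseteq\bigcap\mathcal C$, so $\overline{B}(z^*,r)\subseteq C_i$ for \emph{every} index $i$. Applying the obtuse‑angle characterization of $P_i$ at the point $z^*+r\,(y-P_i(y))/\|y-P_i(y)\|\in C_i$ I would prove the \emph{linear} estimate
\[
\|y-z^*\|^2-\|P_i(y)-z^*\|^2\ \ge\ r\,\|y-P_i(y)\|\qquad\text{for all }y\in\R^n\text{ and all indices }i .
\]
Telescoping this along the orbit $y_0=y$, $y_l=P_{\ixv_l}(y_{l-1})$ of an index vector $\ixv=(\ixv_1,\dots,\ixv_m)$ gives $\|y-z^*\|^2-\|P[\ixv](y)-z^*\|^2\ge r\sum_{l=1}^m\|y_{l-1}-y_l\|\ge r\,\|y-y_{l_0}\|$ for any $l_0\le m$; choosing $l_0$ with $\ixv_{l_0}=j$ (so that $y_{l_0}\in C_j$) yields $\|y-z^*\|^2-\|P[\ixv](y)-z^*\|^2\ge r\,d(y,C_j)$ whenever $j\in\IxSetOfVec(\ixv)$. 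The essential point is that, because we sum the \emph{displacements} (triangle inequality) rather than their squares (Cauchy--Schwarz), this bound carries no factor of $\length(\ixv)$ — which matters since no bound on the lengths of the index vectors used is assumed.

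Finally I would assemble the pieces. Using the convexity bound of the first paragraph with $z=z^*$, then discarding the nonnegative terms with $\ixv\notin\Ivecs\langle j\rangle$ and applying the displacement estimate to the rest, I get
\[
\|x^k-z^*\|^2-\|x^{k+1}-z^*\|^2\ \ge\ r\,d(x^k,C_j)\sum_{\ixv\in\Ivecs\langle j\rangle}w^k(\ixv) ,
\]
and since the left side telescopes to the finite quantity $\|x^0-z^*\|^2-\rho(z^*)^2$, we obtain $\sum_{k=0}^\infty d(x^k,C_j)\sum_{\ixv\in\Ivecs\langle j\rangle}w^k(\ixv)<\infty$. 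If $\sum_{k=0}^\infty\sum_{\ixv\in\Ivecs\langle j\rangle}w^k(\ixv)=\infty$, this forces $\liminf_k d(x^k,C_j)=0$; but $d(\cdot,C_j)$ is continuous and $x^k\to x^\infty$, so $d(x^\infty,C_j)=0$, whence $x^\infty\in C_j$ because $C_j$ is closed. The ``in particular'' assertion is immediate: if $\sum_{k=0}^\infty\sum_{\ixv\in\Ivecs\langle j\rangle}w^k(\ixv)=\infty$ for every $j$ with $C_j\ne\R^n$, then $x^\infty$ lies in every $C_j$, i.e. in $\bigcap\mathcal C$.
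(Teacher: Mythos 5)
Your proposal is correct, and its second half is essentially the paper's argument, but your convergence step takes a genuinely different route. For membership of the limit in $C_j$, your inscribed-ball estimate $\|y-z^*\|^2-\|P_i(y)-z^*\|^2\ge r\,\|y-P_i(y)\|$ is exactly Proposition~\ref{fact-one} with $d(z^*,\R^n\setminus C_i)\ge r$, your telescoping along a string with the triangle inequality (rather than Cauchy--Schwarz, so no $\length(\ixv)$ factor) is assertion~3 of Lemma~\ref{gen-fact-zero}, and your per-iteration inequality and its summed consequence are Lemma~\ref{lem-fact-another} and Corollary~\ref{cor-fact-another}; the concluding $\liminf$ argument matches the paper's. (One wording slip: for $\ixv\notin\Ivecs\langle j\rangle$ you cannot simply ``discard'' those terms from the Jensen bound --- you must bound them by $\|x^k-z^*\|^2$ via nonexpansivity, which is clearly what you intend and what your displayed inequality reflects.) Where you diverge is the proof that $(x^k)$ converges at all: the paper deduces from Lemma~\ref{lem-fact-one} and Corollary~\ref{cor-bounded-motion} that $\sum_k\|x^{k+1}-x^k\|<\infty$, so the iterates form a Cauchy sequence, whereas you use Fej\'er monotonicity with respect to the $n+1$ affinely independent simplex vertices: every accumulation point satisfies $\|x-z_i\|=\rho(z_i)$ for $i=0,\dots,n$, and differencing these squared equations yields $n$ linear equations with linearly independent normals $z_0-z_i$, so the accumulation point is unique and the bounded sequence converges. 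Both uses of full-dimensionality are sound; the paper's route is quantitatively stronger (summable step sizes), while yours is arguably more elementary, needing only monotonicity of $\|x^k-z\|$ for finitely many well-chosen $z$ and none of the linear decrease machinery until the membership step.
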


This theorem will be proved in sec.~\ref{subsec-proof-of-1st}.

\subsection{Counterexamples When $\bigcap\mathcal{C}$ is Not $n$-Dimensional}

\label{subsec-counterexamples}

The conclusions of the First Convergence Theorem may be false if $\bigcap\mathcal{C}$
is not an $n$-dimensional subset of $\R^{n}$: Even if $\sum_{k=0}^{\infty}\sum_{\ixv\in\Ivecs\langle i\rangle}w^{k}(\ixv)=\infty$
for every index $i$, a sequence $(x^{k})_{k=0}^{\infty}$ that satisfies
(\ref{alg}) may fail to converge or may converge to a point that
is not in $\bigcap\mathcal{C}=\bigcap_{i=1}^{\infty}C_{i}$. 

Example 4.1 of \cite{Comb97b} provides a counterexample in which
$(x^{k})_{k=0}^{\infty}$ fails to converge even though $\sum_{k=0}^{\infty}\sum_{\ixv\in\Ivecs\langle i\rangle}w^{k}(\ixv)=\infty$
for every index $i$. That counterexample is an instance of a class
of counterexamples we now describe. 

Let $(\theta_{i})_{i=0}^{\infty}$ be any infinite sequence of real
numbers that are not all equal for which the following conditions
all hold:

\begin{lyxlist}{00.00}
\item [{\emph{\qquad{}}\hspace{0.25em}1.}] \noindent $0\leq|\theta_{i+1}-\theta_{i}|<\pi/2$
for $0\leq i<\infty$.
\item [{\emph{\qquad{}}\hspace{0.25em}2.}] As $i\rightarrow\infty$,
$|\theta_{i+1}-\theta_{i}|$ is $O(1/i)$ (which implies $\log\cos(\theta_{i+1}-\theta_{i})$
is $O(1/i^{2})$, by Taylor's Theorem). 
\item [{\emph{\qquad{}}\hspace{0.25em}3.}] For each index $i$ there
are infinitely many indices $j$ such that $\theta_{j}\bmod2\pi=\theta_{i}\bmod2\pi$.
\end{lyxlist}
Bearing in mind that $\sum_{i=1}^{\infty}1/i=\infty$, it is easy
to construct such sequences $(\theta_{i})_{i=0}^{\infty}$. For example,
writing $\phi_{i}$ for $\sum_{j=1}^{i}1/j$ , we can obtain such
a sequence $(\theta_{i})_{i=0}^{\infty}$ from the sequence $(\phi_{i})_{i=0}^{\infty}$
by inserting additional numbers in such a way that the resulting sequence
satisfies condition 3 as well as conditions 1 and 2.\footnote{One way to do this is to let $(\theta_{i})_{i=0}^{\infty}$ be the
nondecreasing sequence obtained from $(\phi_{i})_{i=0}^{\infty}$
by inserting the sequence of numbers $(\phi_{j}\bmod2\pi+(2j+1)2^{k}\pi)_{k=1}^{\infty}$
for each $j\geq0$. Then $(\theta_{i})_{i=0}^{\infty}$ satisfies
conditions 1 and 3. For each positive even integer $2m$ there exist
unique integers $j\geq0$ and $k\geq1$ such that $(2j+1)2^{k}=2m$
(as is clear when we write $2m$ in binary notation), and so the interval
$[2m\pi,2(m+1)\pi)$ contains exactly one of the inserted numbers.
It can be seen from this that the sequence $(\theta_{i})_{i=0}^{\infty}$
satisfies condition 2.}

Now let $R_{1},R_{2},R_{3},\dots\subset\R^{2}$ be the sequence of
closed rays starting from $(0,0)$ such that each ray $R_{i}$ passes
through the point $(\cos\theta_{i},\sin\theta_{i})$. (Thus $R_{i}=R_{j}$
if and only if $\theta_{i}\bmod2\pi=\theta_{j}\bmod2\pi$.) Let $x^{0}$
be the point $(\cos\theta_{0},\sin\theta_{0})$, and for $0\leq k<\infty$
let $x^{k+1}$ be the orthogonal projection of $x^{k}$ on $R_{k+1}$. 

Let $(C_{i})_{i=1}^{\infty}$ be the sequence obtained from $(R_{i})_{i=1}^{\infty}$
by omitting all but the first occurrence of each ray in $(R_{i})_{i=1}^{\infty}$,
so that $C_{i}\neq C_{j}$ whenever $i\neq j$ and $\{C_{i}\mid1\leq i<\infty\}=\{R_{i}\mid1\leq i<\infty\}$.
For $1\leq i<\infty$, let $\alpha(i)$ be the index such that $R_{i}=C_{\alpha(i)}$.
This implies $x^{k+1}=P_{\alpha(k+1)}(x^{k})$ for $0\leq k<\infty$
(where $P_{j}:\R^{n}\rightarrow C_{j}$ is the projection map, as
before). So if for every iteration number $k$ we define 
\begin{equation}
w^{k}(\ixv)=\begin{cases}
1 & \text{if \ensuremath{\ixv=(\alpha(k+1))}}\\
0 & \text{otherwise}
\end{cases}\label{eq:wt}
\end{equation}
then $(x^{k})_{k=0}^{\infty}$ satisfies equation (\ref{alg}). Moreover,
for every index $i$ it follows from (\ref{eq:wt}) that $\sum_{\ixv\in\Ivecs\langle i\rangle}w^{k}(\ixv)=1$
whenever $\alpha(k+1)=i$ (i.e., whenever $R_{k+1}=C_{i}$), and hence
that $\sum_{k=0}^{\infty}\sum_{\ixv\in\Ivecs\langle i\rangle}w^{k}(\ixv)=\infty$
(as condition~3 above implies there are infinitely many $k$s for
which $R_{k+1}=C_{i}$). However, $(x^{k})_{k=0}^{\infty}$ fails
to converge: Indeed, as $k\rightarrow\infty$ it follows from conditions
1 and 2 that $\|x^{k}\|=\prod_{i=0}^{k-1}\cos(\theta_{i+1}-\theta_{i})$
and that this product converges to a nonzero value (since $\sum_{i=1}^{\infty}1/i^{2}$
converges), while conditions 1 and 3 imply $\theta_{k}\bmod2\pi$
does not converge.

We can give a similar counterexample to show that a sequence satisfying
(\ref{alg}) may converge to a point that is \emph{not} in $\bigcap_{i=1}^{\infty}C_{i}$
even though $\sum_{k=0}^{\infty}\sum_{\ixv\in\Ivecs\langle i\rangle}w^{k}(\ixv)=\infty$
for every index $i$. Let $y^{0},y^{1},y^{2},\dots$ be a convergent
subsequence $x^{r_{0}},x^{r_{1}},x^{r_{2}},\dots$ of the sequence
$x^{0},x^{1},x^{2},\dots$ of the above counterexample, and redefine
the weight function $w^{k}$ for each $k$ so that $w^{k}(\ixv)=1$
if $\ensuremath{\ixv=(\alpha(r_{k}+1),\alpha(r_{k}+2),\dots,\alpha(r_{k+1}))}$
and $w^{k}(\ixv)=0$ otherwise. Now the convergent sequence $y^{0},y^{1},y^{2},\dots$
satisfies equation (\ref{alg}) with $y^{k+1}$ and $y^{k}$ in place
of $x^{k+1}$ and $x^{k}$. Much as before, we have that $\sum_{k=0}^{\infty}\sum_{\ixv\in\Ivecs\langle i\rangle}w^{k}(\ixv)=\infty$
for every index $i$. But if $y^{\infty}=\lim_{k\rightarrow\infty}y^{k}$ then
we have that $\|y^{\infty}\|=\lim_{k\rightarrow\infty}\|y^{k}\|=\lim_{k\rightarrow\infty}\|x^{k}\|>0$,
and so $y^{\infty}\not\in\bigcap_{i=1}^{\infty}C_{i}=\{(0,0)\}$.

In the above counterexamples the collection $\mathcal{C}=\{C_{i}\mid1\leq i<\infty\}$
has infinitely many members. We will see from the Third Convergence
Theorem that there are no such counterexamples in the finite case.

\subsection{Preliminary Results}

We now establish some basic properties of the projection operators
$P_{i}$ and the sequence $x^{0},x^{1},x^{2},\dots$ that will be
used to prove the First Convergence Theorem. In this subsection we
do \emph{not} assume $\bigcap\mathcal{C}$ is an $n$-dimensional
subset of $\R^{n}$, so the results we establish here can also be
used to prove other convergence theorems that do not have this hypothesis.

The results of this subsection depend on the following geometrical
fact:

\begin{lemma} \label{lem-geom} Let $u$, $v$, and $a$ be points
in $\R^{n}$ such that $(a-u)\cdot(v-u)>0$. Then the line segment
that joins $u$ to $v$ contains points that are closer to $a$ than
$u$ is. \end{lemma}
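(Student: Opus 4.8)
The plan is to parametrize the segment from $u$ to $v$ and compute the squared distance to $a$ as a function of the parameter. Write $f(t) = \|u + t(v-u) - a\|^2$ for $t \in [0,1]$, so that $f(0) = \|u-a\|^2$ is the squared distance from $u$ to $a$. Expanding, $f(t) = \|u-a\|^2 - 2t\,(a-u)\cdot(v-u) + t^2\|v-u\|^2$. The linear coefficient is $-2(a-u)\cdot(v-u)$, which is negative by hypothesis (note this also forces $v \neq u$, so $\|v-u\|^2 > 0$ and $f$ is a genuine upward parabola in $t$).

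The key step is then just the observation that $f'(0) = -2(a-u)\cdot(v-u) < 0$, so $f$ is strictly decreasing at $t=0$; hence for all sufficiently small $t > 0$ we have $f(t) < f(0)$. Any such point $u + t(v-u)$ with $t \in (0,1]$ small enough lies on the segment joining $u$ to $v$ and is strictly closer to $a$ than $u$ is. (If one wants an explicit threshold: $f(t) < f(0)$ for all $t \in \bigl(0,\, \min\{1,\, 2(a-u)\cdot(v-u)/\|v-u\|^2\}\bigr)$, since on that range the term $-2t(a-u)\cdot(v-u) + t^2\|v-u\|^2 = t\bigl(t\|v-u\|^2 - 2(a-u)\cdot(v-u)\bigr)$ is negative.)

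There is essentially no obstacle here; the statement is elementary and the only thing to be careful about is recording that the hypothesis $(a-u)\cdot(v-u) > 0$ rules out $u = v$ (otherwise "the line segment that joins $u$ to $v$" and "points closer than $u$" would be vacuous), which is automatic since the dot product would be $0$ in that case. I would keep the write-up to three or four lines.
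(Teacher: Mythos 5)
Your proof is correct and is essentially the same as the paper's: both parametrize the segment as $u+t(v-u)$, expand the squared distance to $a$, and observe that the linear term $-2t\,(a-u)\cdot(v-u)$ dominates for sufficiently small $t>0$. The derivative phrasing and the explicit threshold are minor cosmetic additions, not a different argument.
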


\begin{proof} When $0\leq\epsilon\leq1$ the point $u+\epsilon(v-u)$
lies on the closed line segment that joins $u$ to $v$, and for sufficiently
small positive $\epsilon$ it is readily confirmed that this point
is closer to $a$ than $u$ is: Indeed, the squares of the distances
from $a$ to $u$ and from $a$ to $u+\epsilon(v-u)$ are respectively
$\|a-u\|^{2}$ and $\|a-u-\epsilon(v-u)\|^{2}$, and we have that
$\|a-u\|^{2}-\|a-u-\epsilon(v-u)\|^{2}=2\epsilon(a-u)\cdot(v-u)-\epsilon^{2}\|v-u\|^{2}>0$
when $\epsilon$ is sufficiently small and positive (since $(a-u)\cdot(v-u)>0$).
\end{proof}

\begin{prop}\label{fact-one} Let $i$ be an index such that $C_{i}\neq\R^{n}$,
let $p\in C_{i}$, and let $z\in\R^{n}$. Then: 
\begin{align*}
\|P_{i}(z)-p\|^{2} & \ \leq\ \|z-p\|^{2}-\|P_{i}(z)-z\|^{2}-2d(p,\R^{n}\setminus C_{i})\,\|P_{i}(z)-z\|\\
 & \ =\ \|z-p\|^{2}-d(z,C_{i})^{2}-2d(p,\R^{n}\setminus C_{i})\,d(z,C_{i})
\end{align*}
\end{prop}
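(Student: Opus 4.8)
The displayed equality is immediate: since $P_i(z)$ is the point of $C_i$ nearest $z$, we have $\|P_i(z)-z\|=d(z,C_i)$, and substituting this into the first line gives the second. So all the content lies in the inequality on the first line.

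The plan for that inequality is to begin with the identity obtained by expanding $\|z-p\|^2=\|(z-P_i(z))+(P_i(z)-p)\|^2$, namely
\[
\|P_i(z)-p\|^2=\|z-p\|^2-\|P_i(z)-z\|^2-2\,(z-P_i(z))\cdot(P_i(z)-p),
\]
which shows that the asserted inequality is equivalent to
\[
(z-P_i(z))\cdot(P_i(z)-p)\ \ge\ d\!\left(p,\R^n\setminus C_i\right)\,\|P_i(z)-z\|.
\]
If $z\in C_i$ then $P_i(z)=z$ and both sides of this last inequality are $0$, so I would assume $z\notin C_i$; put $u=P_i(z)$ and let $e=(u-z)/\|u-z\|$, a unit vector.

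Next I would record the variational characterization of the projection, which follows from Lemma~\ref{lem-geom}: for every $q\in C_i$ one has $(z-u)\cdot(q-u)\le 0$, equivalently $e\cdot(q-u)\ge 0$. Indeed, if $(z-u)\cdot(q-u)>0$ held for some $q\in C_i$, then Lemma~\ref{lem-geom} (applied with its ``$a$'' equal to $z$ and its ``$v$'' equal to $q$) would produce a point of the segment from $u$ to $q$ --- a segment lying in $C_i$ by convexity --- that is strictly closer to $z$ than $u$ is, contradicting $u=P_i(z)$. Taking $q=p$ in particular gives $e\cdot(p-u)\ge 0$; write $t$ for this nonnegative number.

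The one step that requires care --- and the only real obstacle I anticipate --- is the distance estimate, which I would carry out as follows. For each $\delta>0$ set $p_\delta=p-(t+\delta)\,e$. Then $e\cdot(p_\delta-u)=t-(t+\delta)=-\delta<0$, so by the variational inequality $p_\delta\notin C_i$; since $C_i\neq\R^n$ the set $\R^n\setminus C_i$ is nonempty (so $d(p,\R^n\setminus C_i)$ is defined) and it contains $p_\delta$, whence $d(p,\R^n\setminus C_i)\le\|p-p_\delta\|=t+\delta$. Letting $\delta\to 0$ yields $d(p,\R^n\setminus C_i)\le t=e\cdot(p-u)$, and therefore
\[
d\!\left(p,\R^n\setminus C_i\right)\,\|P_i(z)-z\|\ \le\ \|u-z\|\,\bigl(e\cdot(p-u)\bigr)\ =\ (u-z)\cdot(p-u)\ =\ (z-u)\cdot(u-p),
\]
which is exactly the inequality required, since $(z-u)\cdot(u-p)=(z-P_i(z))\cdot(P_i(z)-p)$. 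The subtlety worth flagging is that $\R^n\setminus C_i$ is open, so it need not contain a point achieving the distance $d(p,\R^n\setminus C_i)$; the auxiliary points $p_\delta$ serve merely to approximate that distance, after which one passes to the limit. Everything else is the law of cosines together with the projection inequality supplied by Lemma~\ref{lem-geom}.
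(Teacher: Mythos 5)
Your proof is correct and follows essentially the same route as the paper's: the same law-of-cosines expansion reduces the claim to the bound $(P_{i}(z)-p)\cdot(z-P_{i}(z))\ \geq\ d(p,\R^{n}\setminus C_{i})\,\|z-P_{i}(z)\|$, with Lemma~\ref{lem-geom} supplying the projection (variational) inequality, and the openness of $\R^{n}\setminus C_{i}$ handled by an approximation argument. The only cosmetic difference is how the nearby points of $\R^{n}\setminus C_{i}$ are produced: you perturb $p$ along the unit normal $e$ by $t+\delta$ and let $\delta\to0$, whereas the paper takes $y$ to be the orthogonal projection of $p$ on the hyperplane through $P_{i}(z)$ orthogonal to $z-P_{i}(z)$ and shows the points $y+\lambda(z-P_{i}(z))$, $\lambda>0$, lie outside $C_{i}$; both yield the same bound $d(p,\R^{n}\setminus C_{i})\leq(P_{i}(z)-p)\cdot\frac{z-P_{i}(z)}{\|z-P_{i}(z)\|}$.
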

\begin{proof}
We have that $\|P_{i}(z)-p\|^{2}=\|z-p\|^{2}-\|P_{i}(z)-z\|^{2}-2(P_{i}(z)-p)\cdot(z-P_{i}(z))$
because: 
\[
\|z-p\|^{2}\ =\ \|(P_{i}(z)-p)+(z-P_{i}(z))\|{}^{2}\ =\ \|P_{i}(z)-p\|^{2}+\|z-P_{i}(z)\|^{2}+2(P_{i}(z)-p)\cdot(z-P_{i}(z))
\]
So it remains only to show that: 
\begin{equation}
(P_{i}(z)-p)\cdot(z-P_{i}(z))\ \geq\ d(p,\R^{n}\setminus C_{i})\,\|z-P_{i}(z)\|\label{eq:to-be-shown}
\end{equation}
Note that $(P_{i}(z)-p)\cdot(z-P_{i}(z))\geq0$, for otherwise it
would follow from Lemma~\ref{lem-geom} (on putting $a=z$, $u=P_{i}(z)$,
and $v=p$) that the line segment joining $p$ to $P_{i}(z)$ contains
points that are strictly closer to $z$ than $P_{i}(z)$ is, which
would contradict the definition of $P_{i}(z)$ since the line segment
lies in the convex set $C_{i}$. 

If $z\in C_{i}$, then $P_{i}(z)=z$ and (\ref{eq:to-be-shown}) is
true. Now let us assume $z\not\in C_{i}$. Let $y$ be the orthogonal
projection of $p$ on the hyperplane through $P_{i}(z)$ that is orthogonal
to $z-P_{i}(z)$. Then $(z-P_{i}(z))\cdot(y-P_{i}(z))=0$, and so
$(z-P_{i}(z))\cdot(y+\lambda(z-P_{i}(z))-P_{i}(z))>0$ for all $\lambda>0$.
By Lemma~\ref{lem-geom}, this implies that for all $\lambda>0$
the line segment with endpoints $P_{i}(z)$ and $y+\lambda(z-P_{i}(z))$
contains points that are strictly closer to $z$ than $P_{i}(z)$
is. Such points are not in $C_{i}$ (as $P_{i}(z)$ is the point in
$C_{i}$ that is closest to $z$), so we see from the convexity of
$C_{i}$ (and the fact that $P_{i}(z)\in C_{i}$) that $y+\lambda(z-P_{i}(z))\notin C_{i}$
if $\ensuremath{\lambda>0}.$ Thus, there exist points arbitrarily
close to $y$ that are not in $C_{i}$, whence $d(p,\R^{n}\setminus C_{i})\leq\|y-p\|$.
Moreover, it follows from the definition of $y$ that the vector projection
of $P_{i}(z)-p$ on $z-P_{i}(z)$ is $y-p$, and we have seen that
$(P_{i}(z)-p)\cdot(z-P_{i}(z))\geq0$. Hence $(P_{i}(z)-p)\cdot\frac{z-P_{i}(z)}{\|z-P_{i}(z)\|}=\|y-p\|\geq d(p,\R^{n}\setminus C_{i})$,
which implies (\ref{eq:to-be-shown}).
\end{proof}
We now deduce a few lemmas from Proposition~\ref{fact-one}.

\begin{lemma} \label{gen-fact-zero} Let $\ixv=(\ixv_{1},\dots,\ixv_{m})\in\Ivecs$
be such that $\bigcap_{j\in\IxSetOfVec(\ixv)}C_{j}=\bigcap_{j=1}^{m}C_{\ixv_{j}}\neq\R^{n}$,
let $s\in\bigcap_{j=1}^{m}C_{\ixv_{j}}$, and let $z\in\R^{n}$. Then: 
\begin{lyxlist}{00.000}
\item [{\emph{\qquad{}}\hspace{0.25em}1.}] $\|P[\ixv](z)-s\|\leq\|z-s\|$,
with equality just if $z\in\bigcap_{j=1}^{m}C_{\ixv_{j}}$. 
\item [{\emph{\qquad{}}\hspace{0.25em}2.}] $\|P[\ixv](z)-s\|^{2}\leq\|z-s\|^{2}-2d(s,\R^{n}\setminus\bigcap_{j=1}^{m}C_{\ixv_{j}})\,\|P[\ixv](z)-z\|$. 
\item [{\emph{\qquad{}}\hspace{0.25em}3.}] $\|P[\ixv](z)-s\|^{2}\leq\|z-s\|^{2}-2d(s,\R^{n}\setminus\bigcap_{j=1}^{m}C_{\ixv_{j}})\,d(z,C_{\ixv_{q}})$
for $1\leq q\leq m$. 
\end{lyxlist}
\end{lemma}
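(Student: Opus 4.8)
The plan is to decompose $P[\ixv]$ into its constituent projections, apply Proposition~\ref{fact-one} at each stage, and telescope. Write $z_{0}=z$ and, for $1\le j\le m$, $z_{j}=P_{\ixv_{j}}(z_{j-1})$, so that $z_{m}=P[\ixv](z)$. Put $D=\bigcap_{j=1}^{m}C_{\ixv_{j}}=\bigcap_{j\in\IxSetOfVec(\ixv)}C_{j}$; by hypothesis $D\neq\R^{n}$, so $\R^{n}\setminus D\neq\emptyset$, $d(s,\R^{n}\setminus D)$ is a well-defined nonnegative real number, and $s\in D\subseteq C_{\ixv_{j}}$ for every $j$. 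The first step is the per-step estimate
\[
\|z_{j}-s\|^{2}\ \le\ \|z_{j-1}-s\|^{2}-\|z_{j}-z_{j-1}\|^{2}-2\,d(s,\R^{n}\setminus D)\,\|z_{j}-z_{j-1}\|\qquad(1\le j\le m).
\]
If $C_{\ixv_{j}}\neq\R^{n}$ this is Proposition~\ref{fact-one} (with index $\ixv_{j}$, point $p=s\in C_{\ixv_{j}}$, and $z$ replaced by $z_{j-1}$), once we observe that $D\subseteq C_{\ixv_{j}}$ gives $\R^{n}\setminus C_{\ixv_{j}}\subseteq\R^{n}\setminus D$, hence $d(s,\R^{n}\setminus C_{\ixv_{j}})\ge d(s,\R^{n}\setminus D)\ge0$, so shrinking the distance factor only enlarges the right-hand side. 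If instead $C_{\ixv_{j}}=\R^{n}$, then $P_{\ixv_{j}}$ is the identity, $z_{j}=z_{j-1}$, and the estimate holds as an equality.

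Summing the per-step estimate over $j=1,\dots,m$ and using the triangle inequality $\sum_{j=1}^{m}\|z_{j}-z_{j-1}\|\ge\|z_{m}-z_{0}\|$ yields
\[
\|z_{m}-s\|^{2}\ \le\ \|z_{0}-s\|^{2}-\sum_{j=1}^{m}\|z_{j}-z_{j-1}\|^{2}-2\,d(s,\R^{n}\setminus D)\sum_{j=1}^{m}\|z_{j}-z_{j-1}\|.
\]
Discarding the two non-positive terms on the right gives $\|P[\ixv](z)-s\|=\|z_{m}-s\|\le\|z_{0}-s\|=\|z-s\|$, which is the inequality of part~1. For the equality clause, the per-step estimate shows $(\|z_{j}-s\|^{2})_{j=0}^{m}$ is non-increasing, so $\|z_{m}-s\|=\|z_{0}-s\|$ forces this sequence to be constant; feeding that back into the per-step estimate forces $\|z_{j}-z_{j-1}\|=0$ for every $j$, i.e.\ $z=z_{0}=z_{1}=\dots=z_{m}$, whence $z=z_{j-1}=P_{\ixv_{j}}(z_{j-1})\in C_{\ixv_{j}}$ for each $j$ and so $z\in D$. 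Conversely, if $z\in D$ then $z\in C_{\ixv_{j}}$ for all $j$, so each $P_{\ixv_{j}}$ fixes $z$, hence $P[\ixv](z)=z$ and equality holds.

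Parts~2 and~3 are read off the same displayed estimate. For part~2, discard the term $-\sum_{j}\|z_{j}-z_{j-1}\|^{2}$ and bound $\sum_{j=1}^{m}\|z_{j}-z_{j-1}\|\ge\|z_{m}-z_{0}\|=\|P[\ixv](z)-z\|$ (legitimate since $d(s,\R^{n}\setminus D)\ge0$), obtaining $\|P[\ixv](z)-s\|^{2}\le\|z-s\|^{2}-2\,d(s,\R^{n}\setminus D)\,\|P[\ixv](z)-z\|$. For part~3, fix $q$ with $1\le q\le m$; since $z_{q}\in C_{\ixv_{q}}$ we have $d(z,C_{\ixv_{q}})\le\|z_{0}-z_{q}\|\le\sum_{j=1}^{q}\|z_{j}-z_{j-1}\|\le\sum_{j=1}^{m}\|z_{j}-z_{j-1}\|$, so again dropping the square-sum term and using $d(s,\R^{n}\setminus D)\ge0$ gives $\|P[\ixv](z)-s\|^{2}\le\|z-s\|^{2}-2\,d(s,\R^{n}\setminus D)\,d(z,C_{\ixv_{q}})$.

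There is no deep obstacle; the argument is careful bookkeeping with two points that need attention. First, Proposition~\ref{fact-one} presupposes $C_{\ixv_{j}}\neq\R^{n}$, so the stages at which $C_{\ixv_{j}}=\R^{n}$ must be disposed of separately — but there $P_{\ixv_{j}}$ is the identity and the per-step estimate is trivial. Second, the passage from $d(s,\R^{n}\setminus C_{\ixv_{j}})$ to $d(s,\R^{n}\setminus D)$ must go in the direction that weakens the estimate, which is valid precisely because $D\subseteq C_{\ixv_{j}}$ makes $d(s,\R^{n}\setminus D)$ the smaller quantity.
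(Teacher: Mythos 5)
Your proof is correct and follows essentially the same route as the paper's: decompose $P[\ixv]$ into the intermediate points $z_{0},z_{1},\dots,z_{m}$, apply Proposition~\ref{fact-one} at each step with $d(s,\R^{n}\setminus C_{\ixv_{j}})$ replaced by the smaller $d(s,\R^{n}\setminus\bigcap_{j}C_{\ixv_{j}})$, telescope, and finish with the triangle inequality. The only differences are cosmetic: you explicitly dispose of steps with $C_{\ixv_{j}}=\R^{n}$ (a point the paper leaves implicit), and for part~3 you bound the full sum $\sum_{j=1}^{m}\|z_{j}-z_{j-1}\|$ below by $d(z,C_{\ixv_{q}})$ instead of truncating the telescoping at $q$ and invoking monotonicity of $\|z_{r}-s\|$, which is equally valid.
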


\begin{proof} We see from Proposition~\ref{fact-one} that if $p\in C_{i}$
and $z\in\R^{n}$ then $\|P_{i}(z)-p\|\leq\|z-p\|$, with equality
just if $z\in C_{i}$. Assertion 1 follows from applying this observation
to each $P_{\ixv_{j}}$ in turn, since $s\in C_{\ixv_{j}}$ for $1\leq j\leq m$.

To prove assertions 2 and 3, we define $z_{0}=z$ and, for $1\leq r\leq m$,
we define $z_{r}=P[(\ixv_{1},\ixv_{2},\dots,\ixv_{r})](z)$, so that
$z_{m}=P[\ixv](z)$. Since 
\[
{\textstyle {d(s,\R^{n}\setminus\bigcap_{j=1}^{m}C_{\ixv_{j}})\leq d(s,\R^{n}\setminus C_{\ixv_{r}})}},
\]
the following is true for $1\leq r\leq m$: 
\begin{alignat}{2}
\|z_{r}-s\|^{2} & \ =\ \|P_{\ixv_{r}}(z_{r-1})-s\|^{2}\nonumber \\
 & \ \leq\ \|z_{r-1}-s\|^{2}-2d(s,\R^{n}\setminus C_{\ixv_{r}})\,\|z_{r}-z_{r-1}\| & \qquad & \text{by Proposition~\ref{fact-one}}\nonumber \\
 & \ \leq\ \|z_{r-1}-s\|^{2}-{\textstyle {2d(s,\R^{n}\setminus\bigcap_{j=1}^{m}C_{\ixv_{j}})\,\|z_{r}-z_{r-1}\|}}\label{eq:implies next}
\end{alignat}
Hence the following holds for $1\leq q\leq m$: 
\begin{align}
\|z_{q}-s\|^{2} & \ \leq\ \|z_{0}-s\|^{2}-{\textstyle {2d(s,\R^{n}\setminus\bigcap_{j=1}^{m}C_{\ixv_{j}})\,\sum_{r=1}^{q}\|z_{r}-z_{r-1}\|}\notag}\nonumber \\
 & \ \leq\ \|z_{0}-s\|^{2}-{\textstyle {2d(s,\R^{n}\setminus\bigcap_{j=1}^{m}C_{\ixv_{j}})\,\|z_{q}-z_{0}\|},}\label{eq:implies(ii)(iii)}
\end{align}
where the second inequality follows from the triangle inequality.
Since $z_{0}=z$ and $z_{m}=P[\ixv](z)$, we get assertion 2 from
the case $q=m$ of (\ref{eq:implies(ii)(iii)}).

Since $z_{q}\in C_{\ixv_{q}}$, we have that $\|z_{q}-z_{0}\|=\|z_{q}-z\|\geq d(z,C_{\ixv_{q}})$.
This and (\ref{eq:implies(ii)(iii)}) imply 
\[
\|z_{q}-s\|^{2}\leq\ \|z_{0}-s\|^{2}-{\textstyle {2d(s,\R^{n}\setminus\bigcap_{j=1}^{m}C_{\ixv_{j}})\,d(z,C_{\ixv_{q}})}\quad\text{for \ensuremath{1\leq q\leq m}.}}
\]
Assertion 3 follows, as $\|P[\ixv](z)-s\|=\|z_{m}-s\|\leq\|z_{q}-s\|$
for $1\leq q\leq m$ (by (\ref{eq:implies next})). \end{proof}

We see from equation (\ref{alg}), and conditions A and B of the definition
of a weight function on $\Ivecs$, that if $g:\R^{n}\rightarrow\R$
is any convex function then 
\begin{equation}
{\textstyle g(x^{k+1})\leq\sum_{\ixv\in\Ivecs}w^{k}(\ixv)g(P[\ixv](x^{k}))\qquad\text{for all \ensuremath{k\geq0}}}\label{Jensen}
\end{equation}
by Jensen's Inequality for functions on $\R^{n}$ \cite[Sec.~5.4]{Webs94-1}.

It is an easy consequence of the triangle inequality that $z\mapsto\|z-a\|$
is a convex function on $\R^{n}$ for any $a\in\R^{n}$. It follows
that $z\mapsto\|z-a\|^{2}$ is also a convex function on $\R^{n}$
(because $z\mapsto z^{2}$ is a convex function on $\R$). On applying
(\ref{Jensen}) to the function $g(z)=\|z-a\|^{2}$, we deduce that:
\begin{equation}
{\textstyle {\|x^{k+1}-a\|^{2}\leq\sum_{\ixv\in\Ivecs}w^{k}(\ixv)\|P[\ixv](x^{k})-a\|^{2}}\qquad\text{for all \ensuremath{k\geq0} and all \ensuremath{a\in\R^{n}}}}.\label{fin-alg}
\end{equation}
We use this to prove:

\begin{lemma} \label{lem-fact-one} Let $c\in\bigcap\mathcal{C}$.
Then for every iteration number $k$ we have that: 
\[
{\textstyle {\|x^{k+1}-c\|^{2}\leq\|x^{k}-c\|^{2}-2d(c,\R^{n}\setminus\bigcap\mathcal{C})\,\|x^{k+1}-x^{k}\|}}
\]
\end{lemma}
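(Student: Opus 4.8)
The plan is to derive the inequality by combining the Jensen-type estimate~(\ref{fin-alg}), assertion~2 of Lemma~\ref{gen-fact-zero}, and the convexity of the Euclidean norm. First I would fix an iteration number $k$ and a point $c\in\bigcap\mathcal{C}$, noting that $\R^{n}\setminus\bigcap\mathcal{C}\neq\emptyset$ by the standing assumption that $\bigcap\mathcal{C}\neq\R^{n}$, so that $d(c,\R^{n}\setminus\bigcap\mathcal{C})$ is a well-defined nonnegative real number.

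The heart of the argument is the following per-string bound, claimed to hold for \emph{every} index vector $\ixv\in\Ivecs$:
\[
\|P[\ixv](x^{k})-c\|^{2}\ \leq\ \|x^{k}-c\|^{2}-2\,d(c,\R^{n}\setminus\bigcap\mathcal{C})\,\|P[\ixv](x^{k})-x^{k}\|.
\]
To prove this I would distinguish two cases. If $\bigcap_{j\in\IxSetOfVec(\ixv)}C_{j}=\R^{n}$, then each entry $\ixv_{l}$ of $\ixv$ satisfies $C_{\ixv_{l}}=\R^{n}$, so each $P_{\ixv_{l}}$ is the identity and hence so is $P[\ixv]$; the claimed bound then reads $\|x^{k}-c\|^{2}\leq\|x^{k}-c\|^{2}$ and is trivially true. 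If instead $\bigcap_{j\in\IxSetOfVec(\ixv)}C_{j}\neq\R^{n}$, then since $c\in\bigcap\mathcal{C}\subseteq\bigcap_{j\in\IxSetOfVec(\ixv)}C_{j}$ I may apply assertion~2 of Lemma~\ref{gen-fact-zero} with $s=c$ and $z=x^{k}$; this gives the same inequality but with $d(c,\R^{n}\setminus\bigcap_{j\in\IxSetOfVec(\ixv)}C_{j})$ in place of $d(c,\R^{n}\setminus\bigcap\mathcal{C})$, and since $\bigcap\mathcal{C}\subseteq\bigcap_{j\in\IxSetOfVec(\ixv)}C_{j}$ implies $\R^{n}\setminus\bigcap_{j\in\IxSetOfVec(\ixv)}C_{j}\subseteq\R^{n}\setminus\bigcap\mathcal{C}$, we have $d(c,\R^{n}\setminus\bigcap_{j\in\IxSetOfVec(\ixv)}C_{j})\geq d(c,\R^{n}\setminus\bigcap\mathcal{C})$; the claimed bound follows.

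To conclude, I would multiply this per-string bound by the nonnegative weight $w^{k}(\ixv)$ and sum over $\ixv\in\Ivecs$ (a finite sum, by condition~A); using $\sum_{\ixv\in\Ivecs}w^{k}(\ixv)=1$ this yields
\[
\textstyle\sum_{\ixv\in\Ivecs}w^{k}(\ixv)\,\|P[\ixv](x^{k})-c\|^{2}\ \leq\ \|x^{k}-c\|^{2}-2\,d(c,\R^{n}\setminus\bigcap\mathcal{C})\sum_{\ixv\in\Ivecs}w^{k}(\ixv)\,\|P[\ixv](x^{k})-x^{k}\|.
\]
By (\ref{fin-alg}) with $a=c$ the left-hand side is at least $\|x^{k+1}-c\|^{2}$, and since $x^{k}=\sum_{\ixv}w^{k}(\ixv)x^{k}$ the triangle inequality gives $\sum_{\ixv}w^{k}(\ixv)\,\|P[\ixv](x^{k})-x^{k}\|\geq\|\sum_{\ixv}w^{k}(\ixv)P[\ixv](x^{k})-x^{k}\|=\|x^{k+1}-x^{k}\|$, the last equality by~(\ref{alg}). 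Combining these gives exactly the asserted inequality.

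I do not expect a substantial obstacle. The only points needing care are the degenerate case in which the string $\ixv$ involves only indices $i$ with $C_{i}=\R^{n}$ (where Lemma~\ref{gen-fact-zero} does not formally apply, but the per-string bound is trivial), and the monotonicity of the distance under passing from $\R^{n}\setminus\bigcap_{j\in\IxSetOfVec(\ixv)}C_{j}$ to the larger set $\R^{n}\setminus\bigcap\mathcal{C}$ --- which is exactly where the global quantity $d(c,\R^{n}\setminus\bigcap\mathcal{C})$ appearing in the statement of the lemma enters the estimate.
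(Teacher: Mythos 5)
Your proof is correct and follows essentially the same route as the paper's: the per-string bound from assertion~2 of Lemma~\ref{gen-fact-zero} together with the monotonicity $d(c,\R^{n}\setminus\bigcap\mathcal{C})\leq d(c,\R^{n}\setminus\bigcap_{j\in\IxSetOfVec(\ixv)}C_{j})$, then averaging with the weights, bounding the left side via (\ref{fin-alg}) and the right side via convexity of the norm (the paper invokes (\ref{Jensen}), which is the same step as your triangle-inequality argument). Your explicit treatment of the degenerate case $\bigcap_{j\in\IxSetOfVec(\ixv)}C_{j}=\R^{n}$, where $P[\ixv]$ is the identity and Lemma~\ref{gen-fact-zero} does not formally apply, is a small but welcome extra care that the paper's proof leaves implicit.
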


\begin{proof} If $\ixv=(\ixv_{1},\dots,\ixv_{m})$, then $d(c,\R^{n}\setminus\bigcap\mathcal{C})\leq d(c,\R^{n}\setminus\bigcap_{j=1}^{m}C_{\ixv_{j}})$.
Bearing this in mind, and writing $\delta$ for $d(c,\R^{n}\setminus\bigcap\mathcal{C})$,
we have that: 
\begin{alignat*}{2}
\|x^{k+1}-c\|^{2} & \,\leq\,{\textstyle {\sum_{\ixv\in\Ivecs}w^{k}(\ixv)\|P[\ixv](x^{k})-c\|^{2}}} &  & \text{by (\ref{fin-alg})}\\
 & \,\leq\,{\textstyle {\sum_{\ixv\in\Ivecs}w^{k}(\ixv)(\|x^{k}-c\|^{2}-2\delta\,\|P[\ixv](x^{k})-x^{k}\|)}} &  & \text{by statement~2 of Lemma~\ref{gen-fact-zero}}\\
 & \,=\,{\textstyle {\|x^{k}-c\|^{2}-2\delta\,\sum_{\ixv\in\Ivecs}w^{k}(\ixv)\|P[\ixv](x^{k})-x^{k}\|}} &  & \text{as \ensuremath{{\textstyle {\sum_{\ixv\in\Ivecs}w^{k}(\ixv)=1}}}}\\
 & \,\leq\,{\textstyle {\|x^{k}-c\|^{2}-2\delta\,\left\Vert \left(\sum_{\ixv\in\Ivecs}w^{k}(\ixv)P[\ixv](x^{k})\right)-x^{k}\right\Vert }} & \quad & \text{by (\ref{Jensen})}\\
 & \,=\,{\textstyle {\|x^{k}-c\|^{2}-2\delta\,\|x^{k+1}-x^{k}\|}}
\end{alignat*}
for all iteration numbers $k$. \end{proof}

\begin{corollary} \label{cor-bounded-motion} Let $c\in\bigcap\mathcal{C}$.
Then we have that 
\[
{\textstyle {\|x^{r}-c\|^{2}\leq\|x^{0}-c\|^{2}-2d(c,\R^{n}\setminus\bigcap\mathcal{C})\sum_{k=0}^{r-1}\|x^{k+1}-x^{k}\|}}
\]
for all iteration numbers $r$. \hfill{}\qedsymbol \end{corollary}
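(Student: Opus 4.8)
The plan is a straightforward induction on $r$ (equivalently, a telescoping sum of the inequalities supplied by Lemma~\ref{lem-fact-one}). Write $\delta=d(c,\R^{n}\setminus\bigcap\mathcal{C})$; note that $\delta\geq0$, and recall that $\R^{n}\setminus\bigcap\mathcal{C}$ is nonempty since at least one $C_{i}$ differs from $\R^{n}$. For the base case $r=0$ the claimed inequality reads $\|x^{0}-c\|^{2}\leq\|x^{0}-c\|^{2}$, because the sum $\sum_{k=0}^{-1}$ is empty; this is trivially true.

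For the inductive step, suppose the inequality holds for some iteration number $r\geq0$. Applying Lemma~\ref{lem-fact-one} with iteration number $k=r$ gives $\|x^{r+1}-c\|^{2}\leq\|x^{r}-c\|^{2}-2\delta\,\|x^{r+1}-x^{r}\|$. Substituting the inductive hypothesis $\|x^{r}-c\|^{2}\leq\|x^{0}-c\|^{2}-2\delta\sum_{k=0}^{r-1}\|x^{k+1}-x^{k}\|$ into the right-hand side yields $\|x^{r+1}-c\|^{2}\leq\|x^{0}-c\|^{2}-2\delta\sum_{k=0}^{r}\|x^{k+1}-x^{k}\|$, which is exactly the asserted inequality for $r+1$. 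This completes the induction.

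I expect no real obstacle here: the only ingredients are Lemma~\ref{lem-fact-one} and the fact that all terms being chained are handled with the correct (nonnegative) signs, so nothing delicate occurs. An equivalent and perhaps cleaner presentation is to rewrite Lemma~\ref{lem-fact-one} as $\|x^{k+1}-c\|^{2}-\|x^{k}-c\|^{2}\leq-2\delta\,\|x^{k+1}-x^{k}\|$, sum over $k=0,1,\dots,r-1$, and observe that the left-hand side telescopes to $\|x^{r}-c\|^{2}-\|x^{0}-c\|^{2}$; I would mention this reformulation but otherwise the argument is as above.
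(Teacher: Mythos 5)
Your proposal is correct and matches the paper's intent: the corollary is stated there without explicit proof precisely because it follows by summing (telescoping) the inequality of Lemma~\ref{lem-fact-one} over $k=0,\dots,r-1$, which is exactly your induction. Nothing further is needed.
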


\begin{lemma} \label{lem-fact-another} Let $c\in\bigcap\mathcal{C}$.
Then for every iteration number $k$ we have that: 
\[
{\textstyle {\|x^{k+1}-c\|^{2}\leq\|x^{k}-c\|^{2}-2d(c,\R^{n}\setminus\bigcap\mathcal{C})\,d(x^{k},C_{j})\sum_{\ixv\in\Ivecs\langle j\rangle}w^{k}(\ixv)}\quad\text{for all indices \ensuremath{j}.}}
\]
\end{lemma}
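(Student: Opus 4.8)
The plan is to rerun the computation from the proof of Lemma~\ref{lem-fact-one}, but this time tracking a single index $j$. Fix an iteration number $k$ and an index $j$, and write $\delta=d(c,\R^{n}\setminus\bigcap\mathcal{C})$. Starting from~(\ref{fin-alg}) with $a=c$, it suffices to bound $\|P[\ixv](x^{k})-c\|^{2}$ for each index vector $\ixv$ in such a way that the extra negative term $2\delta\,d(x^{k},C_{j})$ is present exactly when $\ixv\in\Ivecs\langle j\rangle$; averaging those bounds with the weights $w^{k}(\ixv)$ and using $\sum_{\ixv\in\Ivecs}w^{k}(\ixv)=1$ then gives the claim.

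For an index vector $\ixv\in\Ivecs\langle j\rangle$, choose $q$ with $\ixv_{q}=j$. If $\bigcap_{j'\in\IxSetOfVec(\ixv)}C_{j'}\neq\R^{n}$, statement~3 of Lemma~\ref{gen-fact-zero} (applied with $z=x^{k}$, $s=c$, and this $q$) gives
\[
\|P[\ixv](x^{k})-c\|^{2}\leq\|x^{k}-c\|^{2}-2\,d\bigl(c,\R^{n}\setminus\bigcap_{j'\in\IxSetOfVec(\ixv)}C_{j'}\bigr)\,d(x^{k},C_{j});
\]
since $\bigcap\mathcal{C}\subseteq\bigcap_{j'\in\IxSetOfVec(\ixv)}C_{j'}$ we have $d(c,\R^{n}\setminus\bigcap_{j'\in\IxSetOfVec(\ixv)}C_{j'})\geq\delta\geq0$, and $d(x^{k},C_{j})\geq0$, so $\|P[\ixv](x^{k})-c\|^{2}\leq\|x^{k}-c\|^{2}-2\delta\,d(x^{k},C_{j})$. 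If instead $\bigcap_{j'\in\IxSetOfVec(\ixv)}C_{j'}=\R^{n}$, then each $C_{\ixv_{r}}$ equals $\R^{n}$, so $P[\ixv]$ is the identity map and $C_{j}=\R^{n}$; hence $d(x^{k},C_{j})=0$ and the displayed inequality holds trivially. For $\ixv\notin\Ivecs\langle j\rangle$ I would simply use $\|P[\ixv](x^{k})-c\|^{2}\leq\|x^{k}-c\|^{2}$, which is statement~1 of Lemma~\ref{gen-fact-zero} (or an equality, with $P[\ixv]$ the identity, when $\bigcap_{j'\in\IxSetOfVec(\ixv)}C_{j'}=\R^{n}$).

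Substituting these bounds into~(\ref{fin-alg}), splitting the sum over $\Ivecs\langle j\rangle$ and its complement, and using condition~B yields
\[
\|x^{k+1}-c\|^{2}\leq\|x^{k}-c\|^{2}-2\delta\,d(x^{k},C_{j})\sum_{\ixv\in\Ivecs\langle j\rangle}w^{k}(\ixv),
\]
which is the assertion. I do not expect a genuine obstacle here: the argument is a transparent refinement of Lemma~\ref{lem-fact-one} with the single index $j$ carried through the same estimate. The only point needing care is that Lemma~\ref{gen-fact-zero} is stated only for index vectors $\ixv$ with $\bigcap_{j'\in\IxSetOfVec(\ixv)}C_{j'}\neq\R^{n}$, so one must note separately that for the remaining index vectors $P[\ixv]$ is the identity and the quantity being discounted, $d(x^{k},C_{j})$, is zero — whence such vectors cause no difficulty.
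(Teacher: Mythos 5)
Your proposal is correct and follows essentially the same route as the paper's own proof: bound $\|x^{k+1}-c\|^{2}$ via (\ref{fin-alg}), split the sum over $\Ivecs\langle j\rangle$ and its complement, apply statements 1 and 3 of Lemma~\ref{gen-fact-zero} together with $d(c,\R^{n}\setminus\bigcap_{j'\in\IxSetOfVec(\ixv)}C_{j'})\geq d(c,\R^{n}\setminus\bigcap\mathcal{C})$, and use $\sum_{\ixv}w^{k}(\ixv)=1$. Your explicit handling of the degenerate case $\bigcap_{j'\in\IxSetOfVec(\ixv)}C_{j'}=\R^{n}$ (where Lemma~\ref{gen-fact-zero} does not formally apply) is a small extra care the paper leaves implicit, and it is handled correctly.
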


\begin{proof} Let us write $\delta$ for $d(c,\R^{n}\setminus\bigcap\mathcal{C})$.
Then for every index $j$ we have that 
\begin{alignat*}{2}
\|x^{k+1}-c\|^{2} & \ \leq\ {\textstyle {\sum_{\ixv\in\Ivecs}w^{k}(\ixv)\|P[\ixv](x^{k})-c\|^{2}}} & \quad & \text{by (\ref{fin-alg})}\\
 & \ =\ {\textstyle {\sum_{\ixv\in\Ivecs\langle j\rangle}w^{k}(\ixv)\|P[\ixv](x^{k})-c\|^{2}}}\\
 & \qquad\qquad+{\textstyle {\sum_{\ixv\not\in\Ivecs\langle j\rangle}w^{k}(\ixv)\|P[\ixv](x^{k})-c\|^{2}}}\\
 & \ \leq\ {\textstyle {\sum_{\ixv\in\Ivecs\langle j\rangle}w^{k}(\ixv)(\|x^{k}-c\|^{2}-2d(x^{k},C_{j})\,\delta)}}\\
 & \qquad\qquad+{\textstyle {\sum_{\ixv\not\in\Ivecs\langle j\rangle}w^{k}(\ixv)\|x^{k}-c\|^{2}}} &  & \text{by statements 1 \& 3 of Lemma~\ref{gen-fact-zero}}\\
 & \ =\ {\textstyle {\|x^{k}-c\|^{2}-2d(x^{k},C_{j})\,\delta\sum_{\ixv\in\Ivecs\langle j\rangle}w^{k}(\ixv)}} &  & \text{as \ensuremath{{\textstyle {\sum_{\ixv\in\Ivecs}w^{k}(\ixv)=1}}}}
\end{alignat*}
for all iteration numbers $k$. \end{proof}

\begin{corollary} \label{cor-fact-another} For all indices $j$,
iteration numbers $r$, and $c\in\bigcap\mathcal{C}$ we have that
\[
{\textstyle {\|x^{r}-c\|^{2}\leq\|x^{0}-c\|^{2}-2d(c,\R^{n}\setminus\bigcap\mathcal{C})\sum_{k=0}^{r-1}\sum_{\ixv\in\Ivecs\langle j\rangle}d(x^{k},C_{j})\,w^{k}(\ixv).}}
\]
Hence $\sum_{k=0}^{\infty}\sum_{\ixv\in\Ivecs\langle j\rangle}d(x^{k},C_{j})\,w^{k}(\ixv)<\infty$
for every index $j$ if $d(c,\R^{n}\setminus\bigcap\mathcal{C})>0$
for some $c\in\bigcap\mathcal{C}$. \hfill{}\qedsymbol \end{corollary}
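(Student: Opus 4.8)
The plan is to derive both assertions by summing the per-iteration inequality of Lemma~\ref{lem-fact-another} over $k$. Fix an index $j$ and a point $c\in\bigcap\mathcal{C}$, and write $\delta=d(c,\R^{n}\setminus\bigcap\mathcal{C})$. Lemma~\ref{lem-fact-another} states that for every iteration number $k$,
\[
\|x^{k}-c\|^{2}-\|x^{k+1}-c\|^{2}\ \geq\ 2\delta\,d(x^{k},C_{j})\sum_{\ixv\in\Ivecs\langle j\rangle}w^{k}(\ixv).
\]
Summing from $k=0$ to $k=r-1$, the left-hand side telescopes to $\|x^{0}-c\|^{2}-\|x^{r}-c\|^{2}$, while the right-hand side becomes $2\delta\sum_{k=0}^{r-1}\sum_{\ixv\in\Ivecs\langle j\rangle}d(x^{k},C_{j})\,w^{k}(\ixv)$. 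Rearranging the resulting inequality gives exactly the displayed upper bound on $\|x^{r}-c\|^{2}$.

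For the ``hence'' clause, the same computation shows that
\[
2\delta\sum_{k=0}^{r-1}\sum_{\ixv\in\Ivecs\langle j\rangle}d(x^{k},C_{j})\,w^{k}(\ixv)\ \leq\ \|x^{0}-c\|^{2}-\|x^{r}-c\|^{2}\ \leq\ \|x^{0}-c\|^{2}
\]
for every iteration number $r$, where the last inequality merely uses $\|x^{r}-c\|^{2}\geq0$. If $\delta>0$, dividing by $2\delta$ shows that every partial sum of the doubly-indexed series is bounded above by $\|x^{0}-c\|^{2}/(2\delta)$. Since each summand $d(x^{k},C_{j})\,w^{k}(\ixv)$ is nonnegative (both factors being nonnegative), the partial sums increase monotonically, so a bounded-monotone-sequence argument gives convergence of the series, with sum at most $\|x^{0}-c\|^{2}/(2\delta)$.

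There is no substantive obstacle here: the corollary is a direct telescoping of Lemma~\ref{lem-fact-another} combined with the elementary fact that a nonnegative series with uniformly bounded partial sums converges. The only bookkeeping point is that the scalar factor $d(x^{k},C_{j})$ and the inner sum over $\ixv\in\Ivecs\langle j\rangle$ are both carried along unchanged through the telescoping in $k$, and that the nonnegativity used in the convergence conclusion is supplied precisely by $d(x^{k},C_{j})\geq0$ together with $w^{k}(\ixv)\geq0$.
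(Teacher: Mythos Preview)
Your proof is correct and is exactly the argument the paper intends: the corollary is stated with a bare \qedsymbol{} immediately after Lemma~\ref{lem-fact-another}, so the paper's ``proof'' is precisely the telescoping sum of that lemma's inequality over $k=0,\dots,r-1$, followed by the observation that $\|x^{r}-c\|^{2}\geq0$ bounds the partial sums when $\delta>0$. There is nothing to add.
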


\subsection{Proof of the First Convergence Theorem}

\label{subsec-proof-of-1st}

Under the hypotheses of the theorem $\bigcap\mathcal{C}$ contains
an $n$-dimensional simplex. Let $c$ be the barycenter of some $n$-simplex
in $\bigcap\mathcal{C}$, so that $d(c,\R^{n}\setminus\bigcap\mathcal{C})>0$.
Then it follows from Corollary~\ref{cor-bounded-motion} (and the
fact that $\|x^{r}-c\|^{2}\geq0$ for all $r$) that the series $\sum_{k=0}^{\infty}\|x^{k+1}-x^{k}\|$
must converge.

Since $\|x^{r_{2}}-x^{r_{1}}\|\leq\sum_{k=r_{1}}^{r_{2}-1}\|x^{k+1}-x^{k}\|$
for all iteration numbers $r_{1}<r_{2}$ (by the triangle inequality),
and since $\sum_{k=0}^{\infty}\|x^{k+1}-x^{k}\|$ converges, $x^{0},x^{1},x^{2},\dots$
is a Cauchy sequence and must therefore converge.

Let $x^{\infty}$ be the limit of the sequence and $j$ any index
for which $x^{\infty}\notin C_{j}$, so that 
\begin{equation}
d(x^{k},C_{j})>d(x^{\infty},C_{j})/2>0\quad\text{for all sufficiently large \ensuremath{k}}.\label{eq:diverges}
\end{equation}
Then the series $\sum_{k=0}^{\infty}\sum_{\ixv\in\Ivecs\langle j\rangle}w^{k}(\ixv)$
converges, for if this series were divergent then it would follow
from (\ref{eq:diverges}) that the series $\sum_{k=0}^{\infty}\sum_{\ixv\in\Ivecs\langle j\rangle}d(x^{k},C_{j})\,w^{k}(\ixv)$
also diverges, contrary to Corollary~\ref{cor-fact-another}.

\section{Second Convergence Theorem}

As we saw in subsection~\ref{subsec-counterexamples}, the First
Convergence Theorem would not be true if we dropped the hypothesis
that $\bigcap\mathcal{C}$ is an $n$-dimensional subset of $\R^{n}$.
However, the convergence theorem we establish in the present section
will not depend on this hypothesis.

As before, $w^{0},w^{1},w^{2},\dots$ will denote arbitrary weight
functions on $\Ivecs$, and $x^{0},x^{1},x^{2},\dots$ points in $\R^{n}$
that satisfy equation (\ref{alg}) for all iteration numbers $k\geq0$.
For any weight function $w$, we define 
\[
\maxLength(w)=\max\{\length(\ixv)\mid\ixv\in\Ivecs\text{ and }w(\ixv)>0\}
\]
If $\ixv=(\ixv_{1},\ixv_{2},\dots,\ixv_{m})\in\Ivecs$ and $i\in\IxSetOfVec(\ixv)=\{\ixv_{1},\ixv_{2},\dots,\ixv_{m}\}$,
then we define $\Position(i,\ixv)$ to be the least $j$ for which
$\ixv_{j}=i$. If $i\not\in\IxSetOfVec(\ixv)$, then we say $\Position(i,\ixv)=\infty$.
As examples, if $\ixv=(9,8,2,4,1,6,2,1,2)$ then $\Position(1,\ixv)=5$,
$\Position(2,\ixv)=3$, $\Position(3,\ixv)=\infty$, and $\Position(4,\ixv)=4$.
Note that $\Position(i,\ixv)>0$, and that $\Position(i,\ixv)\leq\length(\ixv)$
whenever $\ixv\in\Ivecs\langle i\rangle$.

For any real numbers $f_{1},f_{2},f_{3},\dots\in[0,1]$ such that
$\sum_{r=1}^{\infty}f_{r}=\infty$, the following convergence theorem
gives a condition on the weight functions $w^{0},w^{1},w^{2},\dots$
that is sufficient to ensure that $x^{0},x^{1},x^{2},\dots$ converges
to a point in $\bigcap\mathcal{C}$. Provided $f_{1},f_{2},f_{3},\dots\in[0,1]$
and $\sum_{r=1}^{\infty}f_{r}=\infty$, the smaller the numbers $f_{1},f_{2},f_{3},\dots$
the better (i.e., the more general) the corresponding sufficient condition
will be. For example, the sufficient condition given by $f_{r}=\frac{1}{100r\log(r+1)}$
is rather better than the sufficient condition given by $f_{r}=\frac{1}{10r}$.

\begin{theorem}[Second Convergence Theorem] \label{2ndconv} Let
$f_{1},f_{2},f_{3},\dots\in[0,1]$ be numbers such that $\sum_{r=1}^{\infty}f_{r}=\infty$.
Suppose there is an infinite strictly increasing sequence of iteration
numbers $\kappa_{1}<\kappa_{2}<\kappa_{3}<\dots$ such that both of
the following are true: 
\begin{lyxlist}{00.000}
\item [{\textbf{\hspace{1em}C1}:}] ${\displaystyle {\sum_{k=\kappa_{r}}^{\kappa_{r+1}-2}\maxLength(w^{k})\leq\frac{1}{f_{r}}}}$
for all $r\in\Z^{+}$ such that $\kappa_{r+1}\geq\kappa_{r}+2$ and
$f_{r}>0$. 
\item [{\textbf{\hspace{1em}C2}:}] For every index $i$ such that $C_{i}\neq\R^{n}$,
${\displaystyle {\max_{\kappa_{r}\leq k<\kappa_{r+1}}\sum_{\ixv\in\Ivecs\langle i\rangle}\frac{w^{k}(\ixv)}{\Position(i,\ixv)}}\geq f_{r}}$
for all sufficiently large $r\in\Z^{+}$. 
\end{lyxlist}
Then $x^{0},x^{1},x^{2},\dots$ converges to a point in $\bigcap\mathcal{C}$.
\end{theorem}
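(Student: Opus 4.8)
The plan is to run a Fej\'er-monotonicity argument while extracting, from conditions C1 and C2, a single subsequence of $(x^{k})$ that converges to a point of $\bigcap\mathcal{C}$. Fix $c\in\bigcap\mathcal{C}$ and abbreviate $a_{k}=\|x^{k}-c\|^{2}$. Since $c\in C_{j}$ for every index $j$, Proposition~\ref{fact-one} (read trivially when $C_{j}=\R^{n}$) gives $\|P_{j}(z)-c\|\le\|z-c\|$ for all $z$, hence $\|P[\ixv](x^{k})-c\|\le\|x^{k}-c\|$ for every index vector $\ixv$; with (\ref{fin-alg}) this shows $(a_{k})$ is nonincreasing, so $(x^{k})$ is bounded and $(a_{k})$ converges. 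The usual argument for Fej\'er-monotone sequences then reduces the theorem to producing \emph{some} subsequence $x^{k_{t}}\to y$ with $y\in\bigcap\mathcal{C}$: once we have this, $\|x^{k}-y\|$ is nonincreasing (same reasoning, with $y$ in place of $c$) and has $0$ as a subsequential limit, so $x^{k}\to y$.

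The engine is a pair of estimates coming from Proposition~\ref{fact-one}. Fix $k$ and an index vector $\ixv=(\ixv_{1},\dots,\ixv_{m})$ with $w^{k}(\ixv)>0$; put $z_{0}=x^{k}$, $z_{r}=P[(\ixv_{1},\dots,\ixv_{r})](x^{k})$ for $1\le r\le m$, and let $\Delta_{k}(\ixv)=a_{k}-\|P[\ixv](x^{k})-c\|^{2}$, which is nonnegative by the nonexpansiveness just noted. Discarding the nonnegative term in Proposition~\ref{fact-one} yields $\|z_{r}-z_{r-1}\|^{2}\le\|z_{r-1}-c\|^{2}-\|z_{r}-c\|^{2}$ for each $r$ (trivial when $C_{\ixv_{r}}=\R^{n}$), and since $(\|z_{r}-c\|)_{r}$ is nonincreasing, telescoping gives $\sum_{r=1}^{q}\|z_{r}-z_{r-1}\|^{2}\le\Delta_{k}(\ixv)$ for $1\le q\le m$. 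For $i\in\IxSetOfVec(\ixv)$ the point $z_{\Position(i,\ixv)}$ lies in $C_{i}$, so the triangle and Cauchy--Schwarz inequalities give
\[
d(x^{k},C_{i})\ \le\ \sum_{r=1}^{\Position(i,\ixv)}\|z_{r}-z_{r-1}\|\ \le\ \bigl(\Position(i,\ixv)\bigr)^{1/2}\,\Delta_{k}(\ixv)^{1/2},
\]
and with $q=m$ we get $\|P[\ixv](x^{k})-x^{k}\|\le\bigl(\maxLength(w^{k})\bigr)^{1/2}\Delta_{k}(\ixv)^{1/2}$. From (\ref{fin-alg}) and condition~B, $a_{k}-a_{k+1}\ge\sum_{\ixv\in\Ivecs}w^{k}(\ixv)\Delta_{k}(\ixv)$. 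Squaring the first display, dividing by $\Position(i,\ixv)$, weighting by $w^{k}(\ixv)$, and summing over $\ixv\in\Ivecs\langle i\rangle$ therefore gives, for every index $i$ and every $k$,
\[
d(x^{k},C_{i})^{2}\sum_{\ixv\in\Ivecs\langle i\rangle}\frac{w^{k}(\ixv)}{\Position(i,\ixv)}\ \le\ a_{k}-a_{k+1},
\]
while combining the second estimate with (\ref{Jensen}) for the norm $z\mapsto\|z-x^{k}\|$ and concavity of the square root gives the per-iteration motion bound $\|x^{k+1}-x^{k}\|\le\bigl(\maxLength(w^{k})(a_{k}-a_{k+1})\bigr)^{1/2}$.

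Now I pass to blocks. Put $S_{r}=a_{\kappa_{r}}-a_{\kappa_{r+1}}\ge0$, so $\sum_{r}S_{r}=a_{\kappa_{1}}-\lim_{k}a_{k}<\infty$. Fix $r$ with $f_{r}>0$. Summing the per-iteration motion bound over $\kappa_{r}\le k'<k$, then applying Cauchy--Schwarz and C1 (the partial sum of $\maxLength$ values is at most $\sum_{k'=\kappa_{r}}^{\kappa_{r+1}-2}\maxLength(w^{k'})\le 1/f_{r}$, the degenerate case $\kappa_{r+1}=\kappa_{r}+1$ being an empty sum), we get for every $k$ with $\kappa_{r}\le k\le\kappa_{r+1}-1$
\[
\|x^{\kappa_{r}}-x^{k}\|\ \le\ \Bigl(\sum_{k'=\kappa_{r}}^{k-1}\maxLength(w^{k'})\Bigr)^{1/2}(a_{\kappa_{r}}-a_{k})^{1/2}\ \le\ \bigl(S_{r}/f_{r}\bigr)^{1/2}.
\]
Let $i$ be any index with $C_{i}\neq\R^{n}$. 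For all sufficiently large $r$, C2 furnishes $k_{r}\in[\kappa_{r},\kappa_{r+1}-1]$ with $\sum_{\ixv\in\Ivecs\langle i\rangle}w^{k_{r}}(\ixv)/\Position(i,\ixv)\ge f_{r}>0$, so applying at $k=k_{r}$ the inequality $d(x^{k},C_{i})^{2}\sum_{\ixv\in\Ivecs\langle i\rangle}w^{k}(\ixv)/\Position(i,\ixv)\le a_{k}-a_{k+1}$ from the previous paragraph gives $d(x^{k_{r}},C_{i})^{2}\le(a_{k_{r}}-a_{k_{r}+1})/f_{r}\le S_{r}/f_{r}$; combining with the block bound, $d(x^{\kappa_{r}},C_{i})\le d(x^{k_{r}},C_{i})+\|x^{\kappa_{r}}-x^{k_{r}}\|\le 2(S_{r}/f_{r})^{1/2}$. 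Since $\sum_{r}S_{r}<\infty$ while $\sum_{r}f_{r}=\infty$ with $f_{r}\in[0,1]$, there are arbitrarily large $r$ with $f_{r}>0$ and $S_{r}/f_{r}$ as small as we please (else $S_{r}\ge\epsilon f_{r}$ for all large $r$, forcing $\sum_{r}S_{r}=\infty$); pick $r_{1}<r_{2}<\dots$ along which $S_{r_{t}}/f_{r_{t}}\to0$. Then $d(x^{\kappa_{r_{t}}},C_{i})\to0$ as $t\to\infty$ for \emph{every} index $i$ (trivially so when $C_{i}=\R^{n}$). The bounded sequence $(x^{\kappa_{r_{t}}})_{t}$ has a convergent sub-subsequence; continuity of $q\mapsto d(q,C_{i})$ forces its limit $y$ to satisfy $d(y,C_{i})=0$, hence $y\in C_{i}$, for every index $i$; thus $y\in\bigcap\mathcal{C}$, and by the first paragraph $x^{0},x^{1},x^{2},\dots$ converges to this point $y$ of $\bigcap\mathcal{C}$.

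The step I expect to be the main obstacle is this final extraction. The inequality $d(x^{k},C_{i})^{2}\sum_{\ixv\in\Ivecs\langle i\rangle}w^{k}(\ixv)/\Position(i,\ixv)\le a_{k}-a_{k+1}$ only tells us that, \emph{for each fixed $i$}, the iterate is close to $C_{i}$ at \emph{some} iteration of each block, and that iteration depends on $i$; the purpose of C1 is precisely to certify that within a block all iterates cluster to within $O\bigl((S_{r}/f_{r})^{1/2}\bigr)$ of each other, so that this per-$i$ closeness can be transferred simultaneously, for all $i$, to the single iteration $\kappa_{r}$. The quantitative pairing of the ``$1/f_{r}$'' bound in C1 with the ``$f_{r}$'' bound in C2, read against the summable decrements $S_{r}$, is what makes $\liminf_{r}S_{r}/f_{r}=0$ carry the argument.
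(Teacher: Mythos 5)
Your argument is correct, and its core estimates coincide with the ones the paper proves on the way to this theorem: your inequality $d(x^{k},C_{i})^{2}\sum_{\ixv\in\Ivecs\langle i\rangle}w^{k}(\ixv)/\Position(i,\ixv)\le\|x^{k}-c\|^{2}-\|x^{k+1}-c\|^{2}$ is Lemma~\ref{lem:eq-ineq1}, your per-iteration motion bound is Lemma~\ref{lem:fund inq2}, your Cauchy--Schwarz block bound paired with \textbf{C1} is Corollary~\ref{cor *}, and the Fej\'er-monotone reduction to producing one subsequential limit in $\bigcap\mathcal{C}$ is Lemma~\ref{first-claim}. Where you genuinely differ is the endgame. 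The paper packages the block analysis into Lemma~\ref{lem main2}, a two-case estimate (either the iterates moved far within the block, or the iterate is still far from $C_{i}$ at the \textbf{C2}-witness iteration) expressed as a minimum of two quantities, and then, via Claim~\ref{third-claim}, runs a separate contradiction argument for each finite index set $\{1,\dots,q\}$ to produce iterates with $\max_{1\le i\le q}d(x^{k_{q}},C_{i})\le1/q$, the contradiction coming from the divergence of $\sum_{r}f_{r}$. You instead combine the two error sources additively by the triangle inequality to obtain the explicit bound $d(x^{\kappa_{r}},C_{i})\le2(S_{r}/f_{r})^{1/2}$ (valid for all sufficiently large $r$ with $f_{r}>0$, the threshold depending on $i$), and then play the summability of the block decrements $S_{r}$ against $\sum_{r}f_{r}=\infty$ to extract a single subsequence with $S_{r_{t}}/f_{r_{t}}\to0$, chosen independently of $i$, along which the distances to all the $C_{i}$ vanish simultaneously; the quantifiers are therefore in the right order, and your handling of the degenerate cases ($\kappa_{r+1}=\kappa_{r}+1$, $f_{r}=0$, $C_{i}=\R^{n}$) is sound. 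What your route buys is a cleaner, more quantitative finish (an explicit decay estimate for $d(x^{\kappa_{r}},C_{i})$ in terms of $S_{r}/f_{r}$) with no min-based case analysis and no per-$q$ contradiction; the paper's organization, by isolating Lemma~\ref{lem main2} and the reduction of Claim~\ref{third-claim}, keeps each step elementary and mirrors the structure it later exploits when the theorem is applied to shifted weight sequences in the Fourth Convergence Theorem, but as a proof of Theorem~\ref{2ndconv} itself your version is complete and somewhat leaner.
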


As \textbf{C1} is vacuously satisfied if $\kappa_{r+1}=\kappa_{r}+1$
for all $r\in\Z^{+}$, the special case of this theorem in which $\kappa_{r}=r$
for all $r\in\Z^{+}$ can be stated more simply, as follows:

\begin{corollary} \label{cor-2ndconv} Let $f_{1},f_{2},f_{3},\dots\in[0,1]$
be such that $\sum_{r=1}^{\infty}f_{r}=\infty$, and suppose the following
is true for every index $i$ such that $C_{i}\neq\R^{n}$: 
\[
\sum_{\ixv\in\Ivecs\langle i\rangle}\frac{w^{r}(\ixv)}{\Position(i,\ixv)}\geq f_{r}\quad\text{for all sufficiently large \ensuremath{r\in\Z^{+}}}.
\]
Then $x^{0},x^{1},x^{2},\dots$ converges to a point in $\bigcap\mathcal{C}$.
\hfill{}\qedsymbol \end{corollary}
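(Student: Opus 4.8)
The plan is to derive this corollary directly as the special case of the Second Convergence Theorem (Theorem~\ref{2ndconv}) obtained by taking $\kappa_r = r$ for every $r \in \Z^+$, exactly as announced in the sentence preceding the corollary. With this choice the strictly increasing sequence $\kappa_1 < \kappa_2 < \kappa_3 < \dots$ is simply $1 < 2 < 3 < \dots$, so the entire task reduces to checking that, under this choice, hypotheses \textbf{C1} and \textbf{C2} of the theorem become, respectively, a vacuous statement and the hypothesis of the corollary; the conclusion then follows by invoking Theorem~\ref{2ndconv}.

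First I would dispose of \textbf{C1}. Because $\kappa_{r+1} = r+1 = \kappa_r + 1$ for every $r$, the precondition ``$\kappa_{r+1} \geq \kappa_r + 2$'' under which \textbf{C1} asserts anything is never satisfied. Hence \textbf{C1} holds vacuously for every $r \in \Z^+$, regardless of the values $\maxLength(w^k)$. This is precisely the remark made immediately before the corollary: the $\maxLength$ budget constraint evaporates once every block reduces to a single iteration.

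Next I would check \textbf{C2}. For $\kappa_r = r$ the set of iteration numbers $k$ with $\kappa_r \leq k < \kappa_{r+1}$, i.e.\ $r \leq k < r+1$, is the singleton $\{r\}$. A maximum over a singleton equals its one entry, so for each index $i$ with $C_i \neq \R^n$ the left-hand side of \textbf{C2} equals $\sum_{\ixv \in \Ivecs\langle i\rangle} w^r(\ixv)/\Position(i,\ixv)$. Thus \textbf{C2} asserts exactly that, for every index $i$ with $C_i \neq \R^n$, one has $\sum_{\ixv \in \Ivecs\langle i\rangle} w^r(\ixv)/\Position(i,\ixv) \geq f_r$ for all sufficiently large $r$, which is verbatim the hypothesis of the corollary.

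Having verified that both hypotheses of Theorem~\ref{2ndconv} hold for the choice $\kappa_r = r$, I would conclude by applying that theorem, whose conclusion---that $x^0, x^1, x^2, \dots$ converges to a point in $\bigcap\mathcal{C}$---is identical to the conclusion sought here. I do not expect any genuine obstacle: the proof is entirely an instantiation, and its whole content is the bookkeeping observation that the block structure of the theorem collapses to single iterations when $\kappa_r = r$, leaving only the per-iteration weighted-sum lower bound recorded in \textbf{C2}.
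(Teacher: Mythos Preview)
Your proposal is correct and matches the paper's own argument exactly: the corollary is obtained from Theorem~\ref{2ndconv} by taking $\kappa_r = r$, which makes \textbf{C1} vacuous and reduces \textbf{C2} to the displayed hypothesis.
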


\subsection{Examples\label{subsec:Examples}}

Before proving the Second Convergence Theorem, we give simple examples
of weight functions for which the theorem implies convergence of the
sequence of iterates $x^{0},x^{1},x^{2},\dots$ to a point in $\bigcap\mathcal{C}$.

In the finite case, it is easy to see that if the weight functions
satisfy the hypotheses H1, H2, and H3 of sec.~\ref{subsec:Outline}
then \textbf{C1 }and \textbf{C2} hold for $(\kappa_{r})_{r=1}^{\infty}=s,2s,3s,4s,\dots$
and $(f_{r})_{r=0}^{\infty}=d,d,d,d,\dots$, where $s$ is any sufficiently
large positive integer and $d$ is any sufficiently small positive
constant. But in the finite case we will see from the Third Convergence
Theorem that the condition ``$\sum_{k=0}^{\infty}\sum_{\ixv\in\Ivecs\langle j\rangle}w^{k}(\ixv)=\infty$
for every index $j$ such that $C_{j}\neq\R^{n}$'' is sufficient
for $x^{0},x^{1},x^{2},\dots$ to converge to a point in $\bigcap\mathcal{C}$,
and this is a much more general sufficient condition than the ones
given by the Second Convergence Theorem. 

In the rest of this subsection we give three examples which do not
assume $\mathcal{C}$ has only finitely many members. The first two
examples are analogs for infinite collections of commonly used procedures
for converging to the intersection of a finite collection of closed
convex sets. In Example 1, $x^{k+1}$ is defined in a way that is
reminiscent of Cimmino's method \cite{Cimm38} of computing approximate
solutions of systems of linear equations (and especially of Algorithm~2.1
of \cite{CeEl02} with unity relaxation). Similarly, the iterates
of Example 2 are defined in a way that is reminiscent of Kaczmarz's
method \cite{Kacz37} of computing approximate solutions of linear
equation systems.

\subsubsection*{Example 1}

\noindent Suppose the sequence of iterates $x^{0},x^{1},x^{2},x^{3},\dots$
satisfies 
\[
x^{k+1}=\frac{1}{k}P_{1}(x^{k})+\frac{1}{k}P_{2}(x^{k})+\dots+\frac{1}{k}P_{k}(x^{k})\qquad\text{for all \ensuremath{k\geq1}.}
\]
When $r\geq1$, we have that $w^{r}(\ixv)>0$ just if $\ixv$ is one
of the $r$ index vectors $(1),(2),\dots,(r)$. For any index $i$,
when $r\geq i$ there is just one $\ixv\in\Ivecs\langle i\rangle$
for which $w^{r}(\ixv)>0$, namely $\ixv=(i)$, and we have that $w^{r}(\ixv)=1/r$
and $\Position(i,\ixv)=1$ for this $\ixv$. Hence $\sum_{\ixv\in\Ivecs\langle i\rangle}\frac{w^{r}(\ixv)}{\Position(i,\ixv)}=1/r$
if $r\geq i$, and so the hypotheses of Corollary~\ref{cor-2ndconv}
are satisfied for $f_{r}=1/r$. Therefore $(x^{k})_{k=0}^{\infty}$
converges to a point in $\bigcap\mathcal{C}$.

\subsubsection*{Example 2}

\noindent For every iteration number $k\geq1$, let $\ixv^{k}$ be
some permutation of $(1,2,\dots,k)$. Suppose the sequence of iterates
$x^{0},x^{1},x^{2},x^{3},\dots$ satisfies 
\[
x^{k+1}=P[\ixv^{k}](x^{k})\qquad\text{for all \ensuremath{k\geq1}.}
\]
When $r\geq1$, we have that $w^{r}(\ixv)>0$ just if $\ixv=\ixv^{r}$.
For this $\ixv$, we have that $w^{r}(\ixv)=1$ and when $r\geq i$
we have that $\ixv\in\Ivecs\langle i\rangle$, and $\Position(i,\ixv)\leq r$.
Hence $\sum_{\ixv\in\Ivecs\langle i\rangle}\frac{w^{r}(\ixv)}{\Position(i,\ixv)}\geq1/r$
if $r\geq i$. So the hypotheses of Corollary~\ref{cor-2ndconv}
are satisfied for $f_{r}=1/r$, and $(x^{k})_{k=0}^{\infty}$ converges
to a point in $\bigcap\mathcal{C}$.

\subsubsection*{Example 3}

For odd iteration numbers $k\geq1$, let $\ixv^{k}$ be some permutation
of $(1,3,5,\dots,k)$. For even iteration numbers $k\geq2$, let $\ixv^{k}$
be some permutation of $(2,4,6,\dots,k)$. Suppose 
\[
x^{k+1}=\frac{1}{2}P_{k+2}(x^{k})+\frac{1}{2}P[\ixv^{k}](x^{k})\quad\text{for all \ensuremath{k\geq1.}}
\]
Let us verify that the hypotheses of the Second Convergence Theorem
are satisfied when we put $\kappa_{r}=2r$ and $f_{r}=\frac{1}{2(r+1)}$
for all $r\geq1$.

Now \textbf{C1} holds because: 
\[
\sum_{k=\kappa_{r}}^{\kappa_{r+1}-2}\maxLength(w^{k})=\sum_{k=2r}^{2r}\maxLength(w^{k})=\maxLength(w^{2r})=\length(\ixv^{2r})=r<\frac{1}{f_{r}}
\]
It remains to verify \textbf{C2}.

As $\kappa_{r}=2r$ and $f_{r}=\frac{1}{2(r+1)}$, \textbf{C2} is
equivalent to the statement that for every index $j$ there exists
an $r_{j}\in\Z^{+}$ such that at least one of the following is true
for all $r\geq r_{j}$: 
\[
{\displaystyle {\sum_{\ixv\in\Ivecs\langle j\rangle}\frac{w^{2r}(\ixv)}{\Position(j,\ixv)}\geq\frac{1}{2(r+1)}\text{\quad or\quad}\sum_{\ixv\in\Ivecs\langle j\rangle}\frac{w^{2r+1}(\ixv)}{\Position(j,\ixv)}\geq\frac{1}{2(r+1)}}}
\]
When $j$ is \textit{even} this holds because if $\ixv=\ixv^{2r}$
and $2r\geq j$, then $\ixv\in\Ivecs\langle j\rangle$, $w^{2r}(\ixv)=\frac{1}{2}$,
and $\Position(j,\ixv)\leq r$, whence $\frac{w^{2r}(\ixv)}{\Position(j,\ixv)}>\frac{1}{2(r+1)}$.
When $j$ is \textit{odd} it holds because if $\ixv=\ixv^{2r+1}$
and $2r+1\geq j$, then $\ixv\in\Ivecs\langle j\rangle$, $w^{2r+1}(\ixv)=\frac{1}{2}$,
and $\Position(j,\ixv)\leq r+1$, whence $\frac{w^{2r+1}(\ixv)}{\Position(j,\ixv)}\geq\frac{1}{2(r+1)}$.
Therefore \textbf{C2} holds.

So $(x^{k})_{k=0}^{\infty}$ converges to a point in $\bigcap\mathcal{C}$.

\subsection{Proof of the Second Convergence Theorem}

\label{proof}

As a consequence of Lemma~\ref{lem-fact-one} (which implies $\|x^{k+1}-c\|\leq\|x^{k}-c\|$
for any $c\in\bigcap\mathcal{C}$ and all iteration numbers $k$),
we deduce:

\begin{lemma} \label{first-claim} If $x^{0},x^{1},x^{2},\dots$
has a subsequence that converges to a point $c$ in $\bigcap\mathcal{C}$,
then $x^{0},x^{1},x^{2},\dots$ converges to $c$. \hfill{}\qedsymbol
\end{lemma}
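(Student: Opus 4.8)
The plan is to exploit the monotonicity that comes out of Lemma~\ref{lem-fact-one}. That lemma asserts $\|x^{k+1}-c\|^{2}\le\|x^{k}-c\|^{2}-2d(c,\R^{n}\setminus\bigcap\mathcal{C})\,\|x^{k+1}-x^{k}\|$ for every $c\in\bigcap\mathcal{C}$ and every iteration number $k$; since $d(c,\R^{n}\setminus\bigcap\mathcal{C})\ge 0$ and $\|x^{k+1}-x^{k}\|\ge 0$, the subtracted term is nonnegative, so in particular $\|x^{k+1}-c\|\le\|x^{k}-c\|$ for all $k\ge 0$. Thus the real sequence $(\|x^{k}-c\|)_{k=0}^{\infty}$ is nonincreasing and bounded below by $0$, hence convergent; call its limit $L\ge 0$.

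Now suppose $x^{r_{0}},x^{r_{1}},x^{r_{2}},\dots$ is a subsequence converging to $c\in\bigcap\mathcal{C}$. First I would observe that $\|x^{r_{j}}-c\|\to 0$ as $j\to\infty$, so this subsequence of $(\|x^{k}-c\|)_{k=0}^{\infty}$ tends to $0$. But a convergent sequence and any of its subsequences share the same limit, so $L=0$. Therefore $\|x^{k}-c\|\to 0$ as $k\to\infty$, i.e.\ $x^{0},x^{1},x^{2},\dots$ converges to $c$. This completes the argument.

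There is essentially no obstacle here: the whole content is packaged in the monotonicity furnished by Lemma~\ref{lem-fact-one}, and the rest is the standard elementary fact that a monotone bounded sequence converges and agrees in the limit with each of its subsequences. The only point worth stating carefully is that the monotonicity holds for \emph{every} $c\in\bigcap\mathcal{C}$ (in particular for the specific limit point of the given subsequence), which is exactly what Lemma~\ref{lem-fact-one} provides, and that it does not require $d(c,\R^{n}\setminus\bigcap\mathcal{C})>0$ — nonnegativity of that distance suffices for the one-step inequality $\|x^{k+1}-c\|\le\|x^{k}-c\|$.
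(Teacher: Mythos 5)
Your argument is correct and is exactly the paper's route: the paper likewise deduces this lemma directly from the monotonicity $\|x^{k+1}-c\|\leq\|x^{k}-c\|$ supplied by Lemma~\ref{lem-fact-one}, with the same standard limit argument you spell out. Nothing is missing.
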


Our proof of the Second Convergence Theorem will be based on the following
claim:

\begin{claim} \label{third-claim} To show that the sequence of iterates
$x^{0},x^{1},x^{2},\dots$ converges to a point in $\bigcap\mathcal{C}$,
it is enough to show that for every index $q$ there is an integer
$k_{q}$ for which the following is true: 
\begin{equation}
\max_{1\leq i\leq q}d(x^{k_{q}},C_{i})\leq1/q\label{claim-ineq}
\end{equation}
\end{claim}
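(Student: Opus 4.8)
The plan is to derive everything from the monotonicity property recorded in Lemma~\ref{lem-fact-one}: for every $c\in\bigcap\mathcal{C}$ the real sequence $(\|x^{k}-c\|)_{k\ge 0}$ is nonincreasing. First I would use this to get boundedness of the orbit: since $\bigcap\mathcal{C}\neq\emptyset$, fix any $c_{0}\in\bigcap\mathcal{C}$; then $\|x^{k}-c_{0}\|\le\|x^{0}-c_{0}\|$ for all $k$, so $\{x^{k}\mid k\ge 0\}$ lies in a bounded subset of $\R^{n}$. In particular the sequence of points $(x^{k_{q}})_{q=1}^{\infty}$ supplied by the hypothesis is bounded, so by Bolzano--Weierstrass it has a subsequence $(x^{k_{q_{m}}})_{m=1}^{\infty}$ converging to some $x^{*}\in\R^{n}$, with $q_{1}<q_{2}<\cdots$.

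Next I would show $x^{*}\in\bigcap\mathcal{C}$. Fix an index $i$. For every $m$ with $q_{m}\ge i$, inequality (\ref{claim-ineq}) gives $d(x^{k_{q_{m}}},C_{i})\le\max_{1\le j\le q_{m}}d(x^{k_{q_{m}}},C_{j})\le 1/q_{m}$. Letting $m\to\infty$ and using the continuity of $z\mapsto d(z,C_{i})$ yields $d(x^{*},C_{i})=0$, and since $C_{i}$ is closed we get $x^{*}\in C_{i}$. As $i$ was arbitrary, $x^{*}\in\bigcap\mathcal{C}$.

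It remains to upgrade subsequential convergence to convergence of the whole sequence. Applying Lemma~\ref{lem-fact-one} with $c=x^{*}$, the sequence $(\|x^{k}-x^{*}\|)_{k\ge 0}$ is nonincreasing, hence converges to $L:=\inf_{k}\|x^{k}-x^{*}\|\ge 0$. Since $\|x^{k_{q_{m}}}-x^{*}\|\ge L$ for every $m$ while $\|x^{k_{q_{m}}}-x^{*}\|\to 0$, we conclude $L=0$, i.e.\ $x^{k}\to x^{*}\in\bigcap\mathcal{C}$, which is exactly the assertion of the claim.

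The only place where care is needed — and the reason the last paragraph is phrased via $\inf$ rather than by quoting Lemma~\ref{first-claim} — is that the hypothesis does not say the integers $k_{q}$ are increasing (or even distinct), so $(x^{k_{q_{m}}})_{m}$ is in general \emph{not} a subsequence of $(x^{k})_{k}$. The ``limit equals infimum'' comparison sidesteps this: it lets one bound the monotone sequence of distances below by its values along the indices $k_{q_{m}}$ without needing those indices to be monotone. Apart from this observation the argument is just Bolzano--Weierstrass together with the continuity of $d(\cdot,C_{i})$ and the closedness of each $C_{i}$, so I do not expect any substantial obstacle.
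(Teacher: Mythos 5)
Your proof is correct and follows essentially the same route as the paper's: boundedness of $(x^{k_q})_q$ via Lemma~\ref{lem-fact-one}, Bolzano--Weierstrass, closedness of the $C_i$ to place the cluster point in $\bigcap\mathcal{C}$, and the Fej\'er-type monotonicity of $\|x^{k}-c\|$ to upgrade subsequential convergence to convergence of the whole sequence. Your explicit treatment of the possibility that the integers $k_q$ are neither increasing nor distinct is a careful refinement of the paper's direct appeal to Lemma~\ref{first-claim}, which tacitly treats a convergent subsequence of $(x^{k_q})_q$ as a subsequence of $(x^{k})_k$.
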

\begin{proof}
[Justification of Claim \ref{third-claim}]Suppose that for every
index $q$ there is an integer $k_{q}$ for which (\ref{claim-ineq})
is true. As $x^{k_{1}},x^{k_{2}},x^{k_{3}},\dots$ is a bounded sequence
in $\R^{n}$ (by Lemma~\ref{lem-fact-one}), it has a convergent
subsequence. For every index $q$ we see from (\ref{claim-ineq})
that $d(x^{k_{r}},C_{q})\leq1/r$ for all integers $r\geq q$, which
implies that if $x$ is the limit of a convergent subsequence of $x^{k_{1}},x^{k_{2}},x^{k_{3}},\dots$
then $d(x,C_{q})\leq1/r$ for all positive integers $r$ (i.e., $d(x,C_{q})=0$),
whence $x\in C_{q}$ (since $C_{q}$ is closed). Thus the limit of
a convergent subsequence of $x^{k_{1}},x^{k_{2}},x^{k_{3}},\dots$
is a point in $\bigcap\mathcal{C}$, and so it follows from Lemma~\ref{first-claim}
that $x^{0},x^{1},x^{2},\dots$ converges to that same point. 
\end{proof}
To show there is a sequence of iteration numbers $k_{1}<k_{2}<k_{3}<\dots$
such that (\ref{claim-ineq}) holds for every index $q$, we use the
inequalities stated in the following lemma and its corollary:

\begin{titulemma*} Let $m$ be any integer greater than or equal
to $2$, $z_{1},\dots,z_{m}$ any sequence of $m$ real numbers, and
$\lambda_{1},\dots,\lambda_{m}$ any sequence of $m$ positive real
numbers. Then: 
\[
\frac{z_{1}^{2}}{\lambda_{1}}+\dots+\frac{z_{m}^{2}}{\lambda_{m}}\geq\frac{(z_{1}+\dots+z_{m})^{2}}{\lambda_{1}+\dots+\lambda_{m}}
\]
\end{titulemma*}

\begin{proof} This well known inequality is equivalent to the Cauchy-Schwarz
inequality for the pair of vectors $(z_{1}/\sqrt{\lambda_{1}},\dots,z_{m}/\sqrt{\lambda_{m}})$
and $(\sqrt{\lambda_{1}},\dots,\sqrt{\lambda_{m}})$. \end{proof}


\begin{corollary*} Let $m$ be any integer greater than or equal
to 2, and $z_{1},\dots,z_{m}$ any sequence of $m$ real numbers.
Then we have that $z_{1}^{2}+\dots+z_{m}^{2}\geq(z_{1}+\dots+z_{m})^{2}/m$.
\hfill{}\qedsymbol \end{corollary*}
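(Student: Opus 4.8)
The plan is to derive this corollary as an immediate specialization of Titu's Lemma, which was just established. The corollary is exactly the unweighted instance of the lemma, so the natural approach is to invoke Titu's Lemma with the particular choice of weights $\lambda_1 = \lambda_2 = \dots = \lambda_m = 1$.

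Concretely, I would first note that the hypotheses of Titu's Lemma are met: $m \geq 2$ by assumption, the $z_1, \dots, z_m$ are arbitrary reals as required, and taking each $\lambda_i = 1$ gives a sequence of $m$ positive real numbers. Substituting these values into the conclusion of Titu's Lemma, the left-hand side $\frac{z_1^2}{\lambda_1} + \dots + \frac{z_m^2}{\lambda_m}$ collapses to $z_1^2 + \dots + z_m^2$, and the denominator $\lambda_1 + \dots + \lambda_m$ on the right-hand side becomes $1 + \dots + 1 = m$. This yields precisely the stated inequality $z_1^2 + \dots + z_m^2 \geq (z_1 + \dots + z_m)^2/m$.

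There is essentially no obstacle here, since the entire content is carried by Titu's Lemma; the only ``step'' is the observation that setting all weights equal to $1$ reduces the weighted statement to the desired unweighted one. I would therefore keep the argument to a single sentence, simply pointing out that the corollary follows from Titu's Lemma on taking $\lambda_1 = \dots = \lambda_m = 1$, and not repeat the Cauchy--Schwarz reasoning already given in the proof of the lemma. One minor point worth a brief mention for completeness is that the corollary's hypothesis $m \geq 2$ exactly matches the lemma's, so no extra case analysis (e.g. an $m = 1$ boundary case) is needed.
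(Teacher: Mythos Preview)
Your proposal is correct and matches the paper's approach: the paper presents this corollary immediately after Titu's Lemma with no proof beyond a \qedsymbol, indicating it follows at once by the specialization $\lambda_1=\dots=\lambda_m=1$ that you describe.
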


We now proceed to show the existence of iteration numbers $k_{1}<k_{2}<k_{3}<\dots$
that satisfy (\ref{claim-ineq}) for every index $q$.

\begin{lemma} \label{lem-new} Let $c$ be any point in $\bigcap\mathcal{C}$.
Let $y\in\R^{n}$, and let $\ixv=(\ixv_{1},\ixv_{2},\dots,\ixv_{m})$
be any index vector. Let $y_{0}=y$, and, for $1\leq j\leq m$, let
$y_{j}=P[(\ixv_{1},\ixv_{2},\ldots,\ixv_{j})](y)$ and let $d_{j}=\|y_{j}-y_{j-1}\|=d(y_{j-1},C_{\ixv_{j}})$.
Then for $1\leq j\leq m$ we have that: 
\begin{gather}
\|y_{j}-c\|^{2}\leq\|y_{j-1}-c\|^{2}-d_{j}^{2}\label{*}\\
\|y_{j}-c\|^{2}\leq\|y-c\|^{2}-(d_{1}^{2}+\dots+d_{j}^{2})\label{**}\\
\|y_{j}-y\|\leq d_{1}+\dots+d_{j}\label{**H}\\
\|y_{j}-c\|^{2}\leq\|y-c\|^{2}-\|y_{j}-y\|^{2}/j\label{***}\\
\|P[\ixv](y)-c\|^{2}=\|y_{m}-c\|^{2}\leq\|y_{j}-c\|^{2}\leq\|y-c\|^{2}-d(y,C_{\ixv_{j}})^{2}/j\label{eq:old-lem-3.6}
\end{gather}
\end{lemma}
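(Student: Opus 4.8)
The plan is to establish the five displayed inequalities of Lemma~\ref{lem-new} one after another, in the order in which they appear, since each follows quickly from the earlier ones together with Proposition~\ref{fact-one}, the triangle inequality, and the Corollary to Titu's Lemma.

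First I would prove \eqref{*}. Note that $y_{j}=P_{\ixv_{j}}(y_{j-1})$ and that $c\in C_{\ixv_{j}}$ because $c\in\bigcap\mathcal{C}$. If $C_{\ixv_{j}}=\R^{n}$, then $P_{\ixv_{j}}$ is the identity map, so $y_{j}=y_{j-1}$, $d_{j}=0$, and \eqref{*} holds with equality. If instead $C_{\ixv_{j}}\neq\R^{n}$, then Proposition~\ref{fact-one}, applied with $i=\ixv_{j}$, $p=c$, and $z=y_{j-1}$, gives
\[
\|y_{j}-c\|^{2}\ \leq\ \|y_{j-1}-c\|^{2}-\|y_{j}-y_{j-1}\|^{2}-2\,d(c,\R^{n}\setminus C_{\ixv_{j}})\,\|y_{j}-y_{j-1}\|,
\]
and since $d(c,\R^{n}\setminus C_{\ixv_{j}})\geq 0$ and $\|y_{j}-y_{j-1}\|=d_{j}$, discarding the last term on the right yields \eqref{*}.

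Next, \eqref{**} follows by iterating \eqref{*} down to $y_{0}=y$ and adding the resulting inequalities, while \eqref{**H} is just the triangle inequality $\|y_{j}-y\|=\|y_{j}-y_{0}\|\leq\sum_{r=1}^{j}\|y_{r}-y_{r-1}\|=\sum_{r=1}^{j}d_{r}$. For \eqref{***} I would combine \eqref{**} and \eqref{**H} with the Corollary to Titu's Lemma: when $j\geq 2$ that corollary gives $(d_{1}+\dots+d_{j})^{2}\leq j\,(d_{1}^{2}+\dots+d_{j}^{2})$, so $\|y_{j}-y\|^{2}\leq j\,(d_{1}^{2}+\dots+d_{j}^{2})$ by \eqref{**H}; when $j=1$ this is trivial since $\|y_{1}-y\|=d_{1}$. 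Hence $d_{1}^{2}+\dots+d_{j}^{2}\geq\|y_{j}-y\|^{2}/j$, and substituting this bound into \eqref{**} gives \eqref{***}. Finally, for \eqref{eq:old-lem-3.6}: iterating \eqref{*} shows that $(\|y_{r}-c\|^{2})_{r=0}^{m}$ is nonincreasing, so $\|P[\ixv](y)-c\|^{2}=\|y_{m}-c\|^{2}\leq\|y_{j}-c\|^{2}$ for $1\leq j\leq m$; and because $y_{j}\in C_{\ixv_{j}}$ we have $\|y_{j}-y\|\geq d(y,C_{\ixv_{j}})$, so \eqref{***} gives $\|y_{j}-c\|^{2}\leq\|y-c\|^{2}-d(y,C_{\ixv_{j}})^{2}/j$.

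I do not expect a real obstacle in this proof; everything is essentially bookkeeping once Proposition~\ref{fact-one} is available. The only two points that need a moment's attention are the degenerate subcase $C_{\ixv_{j}}=\R^{n}$, where Proposition~\ref{fact-one} does not literally apply but the conclusion is immediate because $P_{\ixv_{j}}$ is then the identity, and the case $j=1$ in the step that invokes the Corollary to Titu's Lemma, which is stated only for $m\geq 2$.
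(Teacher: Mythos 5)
Your proof is correct and follows essentially the same route as the paper's: \eqref{*} from Proposition~\ref{fact-one}, \eqref{**} by iteration, \eqref{**H} by the triangle inequality, \eqref{***} via the Corollary to Titu's Lemma, and \eqref{eq:old-lem-3.6} from the monotonicity of $\|y_{r}-c\|$ together with $\|y_{j}-y\|\geq d(y,C_{\ixv_{j}})$. Your extra care with the degenerate case $C_{\ixv_{j}}=\R^{n}$ and with $j=1$ in Titu's corollary is a small refinement the paper leaves implicit, not a different argument.
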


\begin{proof} Since $y_{j}=P_{\ixv_{j}}(y_{j-1})$, (\ref{*}) follows
from Proposition~\ref{fact-one}; (\ref{**}) follows from (\ref{*}).
Inequality (\ref{**H}) follows from the triangle inequality. Inequality
(\ref{***}) follows from (\ref{**}), the corollary to Titu's Lemma,
and (\ref{**H}). The first inequality of (\ref{eq:old-lem-3.6})
follows from the fact that (\ref{*}) holds for $1\leq j\leq m$.
The second inequality of (\ref{eq:old-lem-3.6}) follows from (\ref{***})
and the fact that $\|y_{j}-y\|\geq d(y,C_{\ixv_{j}})$ because $y_{j}\in C_{\ixv_{j}}$.
\end{proof}

\begin{lemma} \label{lem:eq-ineq1} Let $c\in\bigcap\mathcal{C}$,
let $i$ be any index, and let $k$ be any iteration number. Then:
\[
\|x^{k+1}-c\|^{2}\leq\|x^{k}-c\|^{2}-d(x^{k},C_{i})^{2}\sum_{\ixv\in\Ivecs\langle i\rangle}\frac{w^{k}(\ixv)}{\Position(i,\ixv)}
\]
 \end{lemma}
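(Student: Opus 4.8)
The plan is to obtain the stated inequality from the basic estimate (\ref{fin-alg}) by splitting the sum over index vectors according to whether or not the index $i$ occurs in them. First I would set $a=c$ in (\ref{fin-alg}) to get
\[
\|x^{k+1}-c\|^{2}\ \leq\ \sum_{\ixv\in\Ivecs}w^{k}(\ixv)\,\|P[\ixv](x^{k})-c\|^{2},
\]
and then write $\Ivecs=\Ivecs\langle i\rangle\cup(\Ivecs\setminus\Ivecs\langle i\rangle)$ so as to break the right-hand side into the sum over $\ixv\in\Ivecs\langle i\rangle$ and the sum over $\ixv\notin\Ivecs\langle i\rangle$. Because $w^{k}$ has finite support (condition A), only finitely many terms are nonzero, so this rearrangement is unproblematic.

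For the terms with $\ixv\notin\Ivecs\langle i\rangle$ I would simply use $\|P[\ixv](x^{k})-c\|\leq\|x^{k}-c\|$, which holds for every index vector $\ixv$ and every $c\in\bigcap\mathcal{C}$: each factor $P_{\ell}$ of $P[\ixv]$ satisfies $\|P_{\ell}(z)-c\|\leq\|z-c\|$ when $c\in C_{\ell}$ (by Proposition~\ref{fact-one} when $C_{\ell}\neq\R^{n}$, and trivially when $C_{\ell}=\R^{n}$, since then $P_{\ell}$ is the identity), and $c$ lies in every $C_{\ell}$. For the terms with $\ixv\in\Ivecs\langle i\rangle$ I would apply inequality (\ref{eq:old-lem-3.6}) of Lemma~\ref{lem-new} with $y=x^{k}$ and $j=\Position(i,\ixv)$: since $\ixv\in\Ivecs\langle i\rangle$, the integer $j=\Position(i,\ixv)$ is well defined, positive, at most $\length(\ixv)$, and satisfies $\ixv_{j}=i$, so (\ref{eq:old-lem-3.6}) gives
\[
\|P[\ixv](x^{k})-c\|^{2}\ \leq\ \|x^{k}-c\|^{2}-\frac{d(x^{k},C_{\ixv_{j}})^{2}}{j}\ =\ \|x^{k}-c\|^{2}-\frac{d(x^{k},C_{i})^{2}}{\Position(i,\ixv)}.
\]

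Substituting these two estimates into the split sum yields
\[
\|x^{k+1}-c\|^{2}\ \leq\ \Big(\sum_{\ixv\in\Ivecs}w^{k}(\ixv)\Big)\|x^{k}-c\|^{2}\ -\ d(x^{k},C_{i})^{2}\sum_{\ixv\in\Ivecs\langle i\rangle}\frac{w^{k}(\ixv)}{\Position(i,\ixv)},
\]
and the asserted inequality then follows from $\sum_{\ixv\in\Ivecs}w^{k}(\ixv)=1$ (condition B). I do not expect any genuine obstacle here; the argument is essentially bookkeeping once Lemma~\ref{lem-new} is available, and the only step that calls for a moment's care is the observation that taking $j=\Position(i,\ixv)$ is exactly what forces the set $C_{\ixv_{j}}$ appearing in (\ref{eq:old-lem-3.6}) to be $C_{i}$.
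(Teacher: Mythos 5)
Your proposal is correct and follows essentially the same route as the paper's proof: split the sum in (\ref{fin-alg}) over $\Ivecs\langle i\rangle$ and its complement, apply (\ref{eq:old-lem-3.6}) of Lemma~\ref{lem-new} with $j=\Position(i,\ixv)$ (so that $C_{\ixv_{j}}=C_{i}$) to the terms involving $i$, bound the remaining terms by $\|x^{k}-c\|^{2}$, and finish with $\sum_{\ixv\in\Ivecs}w^{k}(\ixv)=1$. The only cosmetic difference is that you justify $\|P[\ixv](x^{k})-c\|\leq\|x^{k}-c\|$ factor by factor via Proposition~\ref{fact-one}, whereas the paper cites the same bound as a consequence of (\ref{eq:old-lem-3.6}); both are valid.
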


\begin{proof} On putting $j=\Position(i,\ixv)$ in (\ref{eq:old-lem-3.6}),
we see that if $\ixv\in\Ivecs\langle i\rangle$ and $y\in\R^{n}$
then: 
\begin{equation}
\|P[\ixv](y)-c\|^{2}\leq\|y-c\|^{2}-\frac{d(y,C_{i})^{2}}{\Position(i,\ixv)}\label{eq:new}
\end{equation}
Moreover, for any $y\in\R^{n}$ we see from (\ref{eq:old-lem-3.6})
that $\|P[\ixv](y)-c\|^{2}\leq\|y-c\|^{2}$ even if $\ixv\not\in\Ivecs\langle i\rangle$.
From this, (\ref{eq:new}), and (\ref{fin-alg}) we deduce that: 
\begin{align*}
\|x^{k+1}-c\|^{2} & \,\,\leq\,\,{\textstyle \sum_{\ixv\in\Ivecs\langle i\rangle}w^{k}(\ixv)\|P[\ixv](x^{k})-c\|^{2}\quad+\quad\sum_{\ixv\not\in\Ivecs\langle i\rangle}w^{k}(\ixv)\|P[\ixv](x^{k})-c\|^{2}}\\
 & \,\,\leq\,\,{\textstyle \sum_{\ixv\in\Ivecs\langle i\rangle}w^{k}(\ixv)\left(\|x^{k}-c\|^{2}-\frac{d(x^{k},C_{i})^{2}}{\Position(i,\ixv)}\right)\quad+\quad\sum_{\ixv\not\in\Ivecs\langle i\rangle}w^{k}(\ixv)\|x^{k}-c\|^{2}}
\end{align*}
Here the right side is equal to $\|x^{k}-c\|^{2}-d(x^{k},C_{i})^{2}\sum_{\ixv\in\Ivecs\langle i\rangle}\frac{w^{k}(\ixv)}{\Position(i,\ixv)}$
because $\sum_{\ixv\in\Ivecs}w^{k}(\ixv)=1$. \end{proof}

\begin{lemma} \label{lem:fund inq2} Let $c\in\bigcap\mathcal{C}$
and let $k$ be any iteration number. Then we have that: 
\begin{equation}
\|x^{k+1}-c\|^{2}\leq\|x^{k}-c\|^{2}-\frac{\|x^{k+1}-x^{k}\|^{2}}{\maxLength(w^{k})}\label{fund inq2}
\end{equation}
\end{lemma}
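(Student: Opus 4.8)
The plan is to combine inequality (\ref{***}) of Lemma~\ref{lem-new}, applied with the index $j$ taken to be the \emph{full length} of the index vector, with the averaging inequality (\ref{fin-alg}). First I would observe that for any index vector $\ixv$ with $m=\length(\ixv)$ and any $y\in\R^{n}$, taking $j=m$ in (\ref{***}) gives
\[
\|P[\ixv](y)-c\|^{2}\leq\|y-c\|^{2}-\frac{\|P[\ixv](y)-y\|^{2}}{\length(\ixv)}.
\]
Since $\length(\ixv)\leq\maxLength(w^{k})$ for every $\ixv$ that $w^{k}$ uses, and $t\mapsto a^{2}/t$ is decreasing for $t>0$, this yields
\[
\|P[\ixv](x^{k})-c\|^{2}\leq\|x^{k}-c\|^{2}-\frac{\|P[\ixv](x^{k})-x^{k}\|^{2}}{\maxLength(w^{k})}
\]
for every index vector $\ixv$ with $w^{k}(\ixv)>0$.

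Next I would multiply by $w^{k}(\ixv)$, sum over $\ixv\in\Ivecs$, and invoke (\ref{fin-alg}) with $a=c$ to bound the left-hand side by the weighted average. Using $\sum_{\ixv\in\Ivecs}w^{k}(\ixv)=1$, this produces
\[
\|x^{k+1}-c\|^{2}\leq\|x^{k}-c\|^{2}-\frac{1}{\maxLength(w^{k})}\sum_{\ixv\in\Ivecs}w^{k}(\ixv)\,\|P[\ixv](x^{k})-x^{k}\|^{2}.
\]
Finally, I would bound the remaining sum from below by $\|x^{k+1}-x^{k}\|^{2}$: this is exactly (\ref{fin-alg}) with $a=x^{k}$ (equivalently, Jensen's inequality (\ref{Jensen}) applied to the convex function $z\mapsto\|z-x^{k}\|^{2}$), which gives $\|x^{k+1}-x^{k}\|^{2}\leq\sum_{\ixv\in\Ivecs}w^{k}(\ixv)\,\|P[\ixv](x^{k})-x^{k}\|^{2}$. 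Substituting this into the previous display yields (\ref{fund inq2}).

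I do not expect a real obstacle here; the only step needing care is replacing the $\ixv$-dependent denominator $\length(\ixv)$ coming out of Lemma~\ref{lem-new} by the single constant $\maxLength(w^{k})$ — which works precisely because the sum ranges only over index vectors actually used by $w^{k}$, each of length at most $\maxLength(w^{k})$. (The degenerate possibility $\maxLength(w^{k})=1$ causes no difficulty, since every index vector has length at least $1$, so the division is always legitimate.)
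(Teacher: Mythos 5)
Your argument is correct and is essentially the paper's own proof: both rely on (\ref{***}) with $j=\length(\ixv)$, replace $\length(\ixv)$ by $\maxLength(w^{k})$ for the index vectors actually used by $w^{k}$, and then apply Jensen's inequality to the convex functions $z\mapsto\|z-c\|^{2}$ and $z\mapsto\|z-x^{k}\|^{2}$. The only difference is cosmetic — the paper bounds the sum $\|x^{k+1}-c\|^{2}+\|x^{k+1}-x^{k}\|^{2}/\maxLength(w^{k})$ in one chain, while you apply the two Jensen steps separately.
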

\begin{proof}
For each index vector $\ixv$ used at iteration $k$, $\length(\ixv)\leq\maxLength(w^{k})$
and so (\ref{***}) implies $\|P[\ixv](x^{k})-c\|^{2}\leq\|x^{k}-c\|^{2}-\|P[\ixv](x^{k})-x^{k}\|^{2}/\maxLength(w^{k})$
or, equivalently, 
\begin{equation}
\|P[\ixv](x^{k})-c\|^{2}+\|P[\ixv](x^{k})-x^{k}\|^{2}/\maxLength(w^{k})\leq\|x^{k}-c\|^{2}\label{fund inq}
\end{equation}
As $u\mapsto\|u-c\|^{2}$ and $u\mapsto\|u-x^{k}\|^{2}$ are convex
functions on $\R^{n}$, we have that: 
\begin{alignat*}{2}
\|x^{k+1}-c\|^{2} & +\frac{\|x^{k+1}-x^{k}\|^{2}}{\maxLength(w^{k})}\\
 & \ =\ {\displaystyle {\left\Vert \left({\textstyle \sum_{\ixv\in\Ivecs}w^{k}(\ixv)P[\ixv](x^{k})}\right)-c\right\Vert ^{2}+\frac{\left\Vert \left(\sum_{\ixv\in\Ivecs}w^{k}(\ixv)P[\ixv](x^{k})\right)-x^{k}\right\Vert ^{2}}{\maxLength(w^{k})}}}\\
 & \ \leq\ {\displaystyle {{\textstyle \sum_{\ixv\in\Ivecs}w^{k}(\ixv)\|P[\ixv](x^{k})-c\|^{2}}+\frac{\sum_{\ixv\in\Ivecs}w^{k}(\ixv)\|P[\ixv](x^{k})-x^{k}\|^{2}}{\maxLength(w^{k})}}\qquad\quad}\,\,\,\, & \text{by (\ref{Jensen})}\\
 & \ =\ {\textstyle {\sum_{\ixv\in\Ivecs}w^{k}(\ixv)\left(\|P[\ixv](x^{k})-c\|^{2}+\|P[\ixv](x^{k})-x^{k}\|^{2}/\maxLength(w^{k})\right)}\notag}\\
 & \ \leq\ {\textstyle {\sum_{\ixv\in\Ivecs}w^{k}(\ixv)\|x^{k}-c\|^{2}}} & \text{by (\ref{fund inq})}\\
 & \ =\ \|x^{k}-c\|^{2} & \text{as \ensuremath{{\textstyle \sum_{\ixv\in\Ivecs}w^{k}(\ixv)=1}}}
\end{alignat*}
which immediately implies (\ref{fund inq2}). 
\end{proof}
\begin{corollary} \label{cor *} Let $\kappa<\kappa^{*}$ be iteration
numbers and let $c\in\bigcap\mathcal{C}$. Then: 
\begin{equation}
\|x^{\kappa^{*}}-c\|^{2}\leq\|x^{\kappa}-c\|^{2}-\frac{\|x^{\kappa^{*}}-x^{\kappa}\|^{2}}{\sum_{k=\kappa}^{\kappa^{*}-1}\maxLength(w^{k})}\label{eq:cor *}
\end{equation}
\end{corollary}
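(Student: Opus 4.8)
The plan is to iterate the one-step bound of Lemma~\ref{lem:fund inq2} across the block of iterations $\kappa,\kappa+1,\dots,\kappa^*-1$ and then replace the resulting sum of weighted squared step-lengths by a single term by means of Titu's Lemma and the triangle inequality. Throughout I would use the elementary fact that each $\maxLength(w^k)$ is a positive integer, so that all the denominators occurring below are well-defined and positive.

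First I would apply Lemma~\ref{lem:fund inq2} at iteration $k$ for each $k$ with $\kappa\le k\le\kappa^*-1$. Since that lemma says $\|x^{k+1}-c\|^2\le\|x^k-c\|^2-\|x^{k+1}-x^k\|^2/\maxLength(w^k)$, iterating it from $k=\kappa$ up to $k=\kappa^*-1$ (a straightforward induction on the upper endpoint) yields
\[
\|x^{\kappa^*}-c\|^2\ \le\ \|x^\kappa-c\|^2-\sum_{k=\kappa}^{\kappa^*-1}\frac{\|x^{k+1}-x^k\|^2}{\maxLength(w^k)}.
\]
It then remains only to check that the sum on the right is at least $\|x^{\kappa^*}-x^\kappa\|^2\big/\sum_{k=\kappa}^{\kappa^*-1}\maxLength(w^k)$. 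For this, set $m=\kappa^*-\kappa$ and apply Titu's Lemma to the $m$ real numbers $\|x^{k+1}-x^k\|$ and the $m$ positive numbers $\maxLength(w^k)$, $\kappa\le k\le\kappa^*-1$; combined with the triangle inequality $\sum_{k=\kappa}^{\kappa^*-1}\|x^{k+1}-x^k\|\ge\|x^{\kappa^*}-x^\kappa\|$ and the monotonicity of squaring on $[0,\infty)$, this gives
\[
\sum_{k=\kappa}^{\kappa^*-1}\frac{\|x^{k+1}-x^k\|^2}{\maxLength(w^k)}\ \ge\ \frac{\bigl(\sum_{k=\kappa}^{\kappa^*-1}\|x^{k+1}-x^k\|\bigr)^2}{\sum_{k=\kappa}^{\kappa^*-1}\maxLength(w^k)}\ \ge\ \frac{\|x^{\kappa^*}-x^\kappa\|^2}{\sum_{k=\kappa}^{\kappa^*-1}\maxLength(w^k)}.
\]
Substituting this lower bound into the previous display gives precisely (\ref{eq:cor *}).

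There is no genuinely difficult step here; the only point that needs a word of care is the degenerate case $\kappa^*=\kappa+1$, for which the sums above have a single term and Titu's Lemma as stated (which requires $m\ge2$) does not literally apply --- but in that case $\sum_{k=\kappa}^{\kappa^*-1}\maxLength(w^k)=\maxLength(w^\kappa)$ and (\ref{eq:cor *}) reduces to Lemma~\ref{lem:fund inq2} itself.
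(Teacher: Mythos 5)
Your proposal is correct and follows essentially the same route as the paper: iterate Lemma~\ref{lem:fund inq2} over $k=\kappa,\dots,\kappa^*-1$, then combine Titu's Lemma with the triangle inequality to pass from the sum of weighted squared step-lengths to the single term $\|x^{\kappa^*}-x^\kappa\|^2/\sum_{k=\kappa}^{\kappa^*-1}\maxLength(w^k)$. Your explicit handling of the degenerate case $\kappa^*=\kappa+1$ (where Titu's Lemma as stated for $m\geq 2$ does not literally apply) is a small point of extra care that the paper passes over silently.
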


\begin{proof} We have that 
\begin{alignat*}{2}
\|x^{\kappa^{*}}-c\|^{2} & \ \leq\ \|x^{\kappa}-c\|^{2}-{\textstyle \sum_{k=\kappa}^{\kappa^{*}-1}}\frac{\|x^{k+1}-x^{k}\|^{2}}{\maxLength(w^{k})} &  & \qquad\text{by Lemma~\ref{lem:fund inq2}}\\
 & \ \leq\ \|x^{\kappa}-c\|^{2}-\frac{\left(\sum_{k=\kappa}^{\kappa^{*}-1}\|x^{k+1}-x^{k}\|\right)^{2}}{\sum_{k=\kappa}^{\kappa^{*}-1}\maxLength(w^{k})} &  & \qquad\text{by Titu's Lemma}\\
 & \ \leq\ \|x^{\kappa}-c\|^{2}-\frac{\|x^{\kappa^{*}}-x^{\kappa}\|^{2}}{\sum_{k=\kappa}^{\kappa^{*}-1}\maxLength(w^{k})} &  & \qquad\text{by the triangle inequality}
\end{alignat*}
which establishes (\ref{eq:cor *}). \end{proof}

\begin{lemma} \label{lem main2} Let $c\in\bigcap\mathcal{C}$, let
$\kappa\leq\kappa^{*}<\kappa'$ be iteration numbers, and let $i$
be any index. Then 
\[
\|x^{\kappa'}-c\|^{2}\leq\|x^{\kappa}-c\|^{2}-\min\left({\displaystyle \frac{1}{\sum_{k=\kappa}^{\kappa^{*}-1}\maxLength(w^{k})}},{\displaystyle \sum_{\ixv\in\Ivecs\langle i\rangle}\frac{w^{\kappa^{*}}(\ixv)}{\Position(i,\ixv)}}\right)\frac{d(x^{\kappa},C_{i})^{2}}{4},
\]
where $\min\left(\frac{1}{\sum_{k=\kappa}^{\kappa^{*}-1}\maxLength(w^{k})},\sum_{\ixv\in\Ivecs\langle i\rangle}\frac{w^{\kappa^{*}}(\ixv)}{\Position(i,\ixv)}\right)$
should be understood to mean $\sum_{\ixv\in\Ivecs\langle i\rangle}\frac{w^{\kappa^{*}}(\ixv)}{\Position(i,\ixv)}$
if $\kappa=\kappa^{*}$. \end{lemma}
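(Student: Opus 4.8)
The plan is to split into cases according to how far the iterate moves between step $\kappa$ and step $\kappa^{*}$. Write $D=d(x^{\kappa},C_{i})$, write $T=\sum_{\ixv\in\Ivecs\langle i\rangle}\frac{w^{\kappa^{*}}(\ixv)}{\Position(i,\ixv)}$, and (when $\kappa<\kappa^{*}$) write $S=\sum_{k=\kappa}^{\kappa^{*}-1}\maxLength(w^{k})$. Throughout I will use Lemma~\ref{lem-fact-one}, which says $\|x^{k+1}-c\|\le\|x^{k}-c\|$; iterating it lets me pass from any intermediate iterate down to $x^{\kappa'}$ (noting $\kappa'>\kappa^{*}$, so $\kappa'\ge\kappa^{*}+1$) and from $x^{\kappa}$ down to $x^{\kappa^{*}}$.

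First I would dispose of the degenerate case $\kappa=\kappa^{*}$, in which the asserted minimum is read as $T$. Applying Lemma~\ref{lem:eq-ineq1} at iteration $\kappa^{*}=\kappa$ gives $\|x^{\kappa+1}-c\|^{2}\le\|x^{\kappa}-c\|^{2}-D^{2}T$, and then $\|x^{\kappa'}-c\|^{2}\le\|x^{\kappa+1}-c\|^{2}\le\|x^{\kappa}-c\|^{2}-D^{2}T\le\|x^{\kappa}-c\|^{2}-T\,D^{2}/4$ by Lemma~\ref{lem-fact-one}, which is the assertion.

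Now assume $\kappa<\kappa^{*}$ and split on the size of $\|x^{\kappa^{*}}-x^{\kappa}\|$. If $\|x^{\kappa^{*}}-x^{\kappa}\|\ge D/2$, then Corollary~\ref{cor *} gives $\|x^{\kappa^{*}}-c\|^{2}\le\|x^{\kappa}-c\|^{2}-\|x^{\kappa^{*}}-x^{\kappa}\|^{2}/S\le\|x^{\kappa}-c\|^{2}-D^{2}/(4S)$, and Lemma~\ref{lem-fact-one} carries this bound down to $x^{\kappa'}$; since $D^{2}/(4S)\ge\min(1/S,T)\,D^{2}/4$ this subcase is finished. If instead $\|x^{\kappa^{*}}-x^{\kappa}\|<D/2$, then the triangle inequality gives $d(x^{\kappa^{*}},C_{i})\ge d(x^{\kappa},C_{i})-\|x^{\kappa^{*}}-x^{\kappa}\|>D/2$, so Lemma~\ref{lem:eq-ineq1} at iteration $\kappa^{*}$ gives $\|x^{\kappa^{*}+1}-c\|^{2}\le\|x^{\kappa^{*}}-c\|^{2}-d(x^{\kappa^{*}},C_{i})^{2}\,T\le\|x^{\kappa^{*}}-c\|^{2}-T\,D^{2}/4$; combining this with $\|x^{\kappa^{*}}-c\|^{2}\le\|x^{\kappa}-c\|^{2}$ and then with $\|x^{\kappa'}-c\|^{2}\le\|x^{\kappa^{*}+1}-c\|^{2}$ (both from Lemma~\ref{lem-fact-one}) closes this subcase too, because $T\,D^{2}/4\ge\min(1/S,T)\,D^{2}/4$.

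The whole argument is elementary once the dichotomy is set up; the only points requiring care are bookkeeping---invoking the monotonicity of $(\|x^{k}-c\|)_{k}$ at exactly the right places, and in particular making sure the gain obtained after iteration $\kappa^{*}$ survives all the way to $\kappa'\ge\kappa^{*}+1$---and the degenerate convention at $\kappa=\kappa^{*}$. I do not expect a genuine obstacle: the substance is entirely the observation that either the iterate moved a lot between $\kappa$ and $\kappa^{*}$, in which case Corollary~\ref{cor *} already supplies a decrease of order $D^{2}/S$, or it did not, in which case $x^{\kappa^{*}}$ still lies at distance more than $D/2$ from $C_{i}$ and Lemma~\ref{lem:eq-ineq1} supplies a decrease of order $T\,D^{2}$.
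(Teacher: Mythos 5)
Your proof is correct and follows essentially the same route as the paper: the same dichotomy on whether $\|x^{\kappa^{*}}-x^{\kappa}\|$ exceeds $d(x^{\kappa},C_{i})/2$, using Corollary~\ref{cor *} in the first case and Lemma~\ref{lem:eq-ineq1} (with the triangle-inequality bound $d(x^{\kappa^{*}},C_{i})\geq d(x^{\kappa},C_{i})/2$) in the second, with Lemma~\ref{lem-fact-one} supplying the monotonicity bookkeeping. The only difference is that you treat $\kappa=\kappa^{*}$ as a separate preliminary case, whereas the paper absorbs it into the second case; this is purely cosmetic.
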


\begin{proof} We first consider the case in which $\|x^{\kappa^{*}}-x^{\kappa}\|>d(x^{\kappa},C_{i})/2$.
In this case $\kappa<\kappa^{*}<\kappa'$ and so 
\begin{alignat*}{2}
\|x^{\kappa'}-c\|^{2} & \leq\|x^{\kappa^{*}}-c\|^{2} & \qquad & \text{by Lemma~\ref{lem-fact-one}}\\
 & \leq\|x^{\kappa}-c\|^{2}-\frac{\|x^{\kappa^{*}}-x^{\kappa}\|^{2}}{\sum_{k=\kappa}^{\kappa^{*}-1}\maxLength(w^{k})} & \qquad & \text{by Corollary~\ref{cor *}}\\
 & <\|x^{\kappa}-c\|^{2}-\left(\frac{1}{\sum_{k=\kappa}^{\kappa^{*}-1}\maxLength(w^{k})}\right)\frac{d(x^{\kappa},C_{i})^{2}}{4} & \qquad & \text{as \ensuremath{\|x^{\kappa^{*}}-x^{\kappa}\|>d(x^{\kappa},C_{i})/2}}
\end{alignat*}
and the lemma holds.

Now we consider the case in which $\|x^{\kappa^{*}}-x^{\kappa}\|\leq d(x^{\kappa},C_{i})/2$.
In this case we see from the triangle inequality that $d(x^{\kappa^{*}},C_{i})\geq d(x^{\kappa},C_{i})/2$,
whence 
\begin{equation}
d(x^{\kappa^{*}},C_{i})^{2}\ \geq\ \frac{d(x^{\kappa},C_{i})^{2}}{4}\label{eq:newbound}
\end{equation}
Therefore, since $\kappa\leq\kappa^{*}<\kappa'$, we have that 
\begin{alignat*}{2}
\|x^{\kappa'}-c\|^{2} & \leq\|x^{\kappa^{*}+1}-c\|^{2} & \qquad & \text{by Lemma~\ref{lem-fact-one}}\\
 & \leq\|x^{\kappa^{*}}-c\|^{2}-d(x^{\kappa^{*}},C_{i})^{2}\sum_{\ixv\in\Ivecs\langle i\rangle}\frac{w^{\kappa^{*}}(\ixv)}{\Position(i,\ixv)} & \qquad & \text{by Lemma~\ref{lem:eq-ineq1}}\\
 & \leq\|x^{\kappa^{*}}-c\|^{2}-\frac{d(x^{\kappa},C_{i})^{2}}{4}\sum_{\ixv\in\Ivecs\langle i\rangle}\frac{w^{\kappa^{*}}(\ixv)}{\Position(i,\ixv)} & \qquad & \text{by (\ref{eq:newbound})}\\
 & \leq\|x^{\kappa}-c\|^{2}-\frac{d(x^{\kappa},C_{i})^{2}}{4}\sum_{\ixv\in\Ivecs\langle i\rangle}\frac{w^{\kappa^{*}}(\ixv)}{\Position(i,\ixv)} &  & \text{by Lemma~\ref{lem-fact-one}}
\end{alignat*}
and so the lemma holds in this case too. \end{proof}

\subsubsection*{Completion of the Proof of the Second Convergence Theorem}

Suppose the hypotheses of the theorem are satisfied, so that $f_{1},f_{2},f_{3},\dots\in[0,1]$
satisfy $\sum_{r=1}^{\infty}f_{r}=\infty$ and $\kappa_{1}<\kappa_{2}<\kappa_{3}<\dots$
are iteration numbers such that 
\begin{lyxlist}{00.000}
\item [{\hspace{2.25em}\textbf{C1}:}] $\sum_{k=\kappa_{r}}^{\kappa_{r+1}-2}\maxLength(w^{k})\leq1/f_{r}$
for all $r\in\Z^{+}$ such that $\kappa_{r+1}\geq\kappa_{r}+2$ and
$f_{r}>0$. 
\item [{\hspace{2.25em}\textbf{C2}:}] For every index $i$ such that $C_{i}\neq\R^{n}$,
${\displaystyle {\max_{\kappa_{r}\leq k<\kappa_{r+1}}\sum_{\ixv\in\Ivecs\langle i\rangle}\frac{w^{k}(\ixv)}{\Position(i,\ixv)}}\geq f_{r}}$
for all sufficiently large $r\in\Z^{+}$. 
\end{lyxlist}
For every index $i$ such that $C_{i}\neq\R^{n}$ and all $r\in\Z^{+}$,
let $\kappa_{r}^{*}(i)$ be an iteration number such that $\kappa_{r}\leq\kappa_{r}^{*}(i)<\kappa_{r+1}$
and $\kappa_{r}^{*}(i)\in{\displaystyle {\argmax_{\kappa_{r}\leq k<\kappa_{r+1}}\sum_{\ixv\in\Ivecs\langle i\rangle}\frac{w^{k}(\ixv)}{\Position(i,\ixv)}}}$.
{[}Here ${\displaystyle {\argmax_{\kappa_{r}\leq k<\kappa_{r+1}}\sum_{\ixv\in\Ivecs\langle i\rangle}\frac{w^{k}(\ixv)}{\Position(i,\ixv)}}}$
denotes the set of those iteration numbers $k$ that maximize $\sum_{\ixv\in\Ivecs\langle i\rangle}\frac{w^{k}(\ixv)}{\Position(i,\ixv)}$
subject to the condition $\kappa_{r}\leq k<\kappa_{r+1}$.{]}

For every index $i$ such that $C_{i}\neq\R^{n}$, $\kappa_{r}^{*}(i)<\kappa_{r+1}$
implies
\[
\sum_{k=\kappa_{r}}^{\kappa_{r}^{*}(i)-1}\maxLength(w^{k})\leq\sum_{k=\kappa_{r}}^{\kappa_{r+1}-2}\maxLength(w^{k})
\]
 and so we see from \textbf{C1} and \textbf{C2} that 
\begin{equation}
\min\left(\frac{1}{\sum_{k=\kappa_{r}}^{\kappa_{r}^{*}(i)-1}\maxLength(w^{k})},\sum_{\ixv\in\Ivecs\langle i\rangle}\frac{w^{\kappa_{r}^{*}(i)}(\ixv)}{\Position(i,\ixv)}\right)\geq f_{r}\label{eq:2ndnewbound}
\end{equation}
if $r$ is sufficiently large, where the left side should be understood
to mean $\sum_{\ixv\in\Ivecs\langle i\rangle}\frac{w^{\kappa_{r}^{*}(i)}(\ixv)}{\Position(i,\ixv)}$
if $\kappa_{r}=\kappa_{r}^{*}(i)$.

For every index $i$ such that $C_{i}\neq\R^{n}$, it follows from
Lemma~\ref{lem main2} (on putting $(\kappa,\kappa^{*},\kappa')=(\kappa_{r},\kappa_{r}^{*}(i),\kappa_{r+1})$),
and the fact that (\ref{eq:2ndnewbound}) holds for all sufficiently
large $r$, that 
\begin{equation}
\|x^{\kappa_{r+1}}-c\|^{2}\leq\|x^{\kappa_{r}}-c\|^{2}-\frac{f_{r}}{4}d(x^{\kappa_{r}},C_{i})^{2}\label{eq:step}
\end{equation}
if $r$ is sufficiently large. Moreover, (\ref{eq:step}) certainly
holds for every $r\in\Z^{+}$ if $C_{i}=\R^{n}$ (so that $d(x^{\kappa_{r}},C_{i})=0$),
by Lemma~\ref{lem-fact-one}. It follows that, for every index $q$,
\begin{equation}
\|x^{\kappa_{r+1}}-c\|^{2}\leq\|x^{\kappa_{r}}-c\|^{2}-\frac{f_{r}}{4}\max_{1\leq i\leq q}d(x^{\kappa_{r}},C_{i})^{2}\label{eq:3rdnewbound}
\end{equation}
if $r$ is sufficiently large.

We claim that for any index $q$ there is a positive integer $k_{q}$
such that: 
\begin{equation}
\max_{1\leq i\leq q}d(x^{k_{q}},C_{i})\leq1/q\label{final-claim-ineq}
\end{equation}
Indeed, if for some index $q$ no such $k_{q}$ existed then it would
follow from (\ref{eq:3rdnewbound}) that 
\[
\|x^{\kappa_{r+1}}-c\|^{2}\leq\|x^{\kappa_{r}}-c\|^{2}-\frac{f_{r}}{4q^{2}}
\]
holds for all sufficiently large $r$, which is impossible as $\sum_{r=0}^{\infty}f_{r}$
diverges.

The existence for each index $q$ of some $k_{q}$ that satisfies
(\ref{final-claim-ineq}) is enough to guarantee convergence of $x^{0},x^{1},x^{2},\dots$
to a point in $\bigcap\mathcal{C}$, as we observed in Claim~\ref{third-claim}.
\hfill{}\qedsymbol

\section{Third Convergence Theorem}

\label{finite convergence}

We continue to assume that $x^{0},x^{1},x^{2},x^{3},\dots$ is an
arbitrary sequence of points in $\R^{n}$ that satisfy (\ref{alg})
for some weight functions $w^{0},w^{1},w^{2},w^{3},\dots:\Ivecs\rightarrow[0,1]$.
The convergence theorem of this section is similar in form to the
First Convergence Theorem. The difference is that, instead of assuming
$\bigcap\mathcal{C}$ is an $n$-dimensional subset of $\R^{n}$,
it assumes $\textbf{UASC}(\mathcal{C})$ holds:

\begin{theorem}[Third Convergence Theorem] \label{finite-000} Suppose
\emph{$\textbf{UASC}(\mathcal{C})$} holds. Then the sequence $x^{0},x^{1},x^{2},x^{3},\dots$
converges, and its limit lies in $C_{j}$ for every index $j$ such
that $\sum_{k=0}^{\infty}\sum_{\ixv\in\Ivecs\langle j\rangle}w^{k}(\ixv)=\infty$.
In particular, the sequence $x^{0},x^{1},x^{2},\dots$ converges to
a point in $\bigcap\mathcal{C}$ if $\sum_{k=0}^{\infty}\sum_{\ixv\in\Ivecs\langle j\rangle}w^{k}(\ixv)=\infty$
for every index $j$ such that $C_{j}\neq\R^{n}$.\end{theorem}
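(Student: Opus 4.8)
The plan is to combine the Fej\'er-type monotonicity already established with a single geometric estimate that $\textbf{UASC}(\mathcal{C})$ makes possible. By Lemma~\ref{lem-fact-one}, $\|x^{k+1}-c\|\le\|x^{k}-c\|$ for every $c\in\bigcap\mathcal{C}$, so $x^{0},x^{1},x^{2},\dots$ is bounded and hence has a cluster point $x^{*}$, and $\|x^{k}-c\|\le\|x^{0}-c\|$ for all $k$. If $x^{*}\in\bigcap\mathcal{C}$ then Lemma~\ref{first-claim} shows at once that the whole sequence converges to $x^{*}$, whose limit then lies in $C_{j}$ for \emph{every} index $j$; so we may assume $x^{*}\notin\bigcap\mathcal{C}$. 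Write $I^{*}=\{i\mid x^{*}\notin C_{i}\}$, a nonempty set. Since $\bigcup_{i\in I^{*}}C_{i}=\bigcup\{C\in\mathcal{C}\mid x^{*}\notin C\}$, the hypothesis $\textbf{UASC}(\mathcal{C})$ supplies a single positive number $\rho:=d\bigl(x^{*},\bigcup_{i\in I^{*}}C_{i}\bigr)$, so that $d(x^{*},C_{i})\ge\rho$ for every $i\in I^{*}$. Call an index vector $\ixv$ \emph{bad} if $\IxSetOfVec(\ixv)\cap I^{*}\neq\emptyset$ and \emph{good} otherwise.

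The crucial step --- and the one I expect to be the main obstacle --- is a ``first bad projection'' estimate: if $c\in\bigcap\mathcal{C}$, $\|y-x^{*}\|<\rho/4$, and $\ixv=(\ixv_{1},\dots,\ixv_{m})$ is bad, then $\|P[\ixv](y)-c\|^{2}\le\|y-c\|^{2}-\frac{9}{16}\rho^{2}$. To prove it, let $q$ be the least index with $\ixv_{q}\in I^{*}$ and put $y_{0}=y$ and $y_{r}=P[(\ixv_{1},\dots,\ixv_{r})](y)$. For $r<q$ we have $x^{*}\in C_{\ixv_{r}}$, so $\|y_{r}-x^{*}\|\le\|y_{r-1}-x^{*}\|$ (Proposition~\ref{fact-one} with $p=x^{*}$), and hence $\|y_{q-1}-x^{*}\|\le\|y-x^{*}\|<\rho/4$; since $\ixv_{q}\in I^{*}$ forces $d(x^{*},C_{\ixv_{q}})\ge\rho$, this gives $d_{q}:=d(y_{q-1},C_{\ixv_{q}})\ge\rho-\rho/4=\frac{3}{4}\rho$, and inequalities (\ref{*}) and (\ref{**}) of Lemma~\ref{lem-new} then yield $\|P[\ixv](y)-c\|^{2}=\|y_{m}-c\|^{2}\le\|y_{q}-c\|^{2}\le\|y-c\|^{2}-d_{q}^{2}\le\|y-c\|^{2}-\frac{9}{16}\rho^{2}$. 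The point is that only good projections precede the first bad one, and those cannot move the current point out of the ball $B(x^{*},\rho/4)$, so the bad set $C_{\ixv_{q}}$ is still at distance at least $\frac{3}{4}\rho$ when it is reached --- regardless of how long $\ixv$ is or how late in $\ixv$ the first bad index occurs. Now fix $c\in\bigcap\mathcal{C}$ and put $W_{k}=\sum_{\ixv\text{ bad}}w^{k}(\ixv)$. Splitting the sum in (\ref{fin-alg}) into its good and bad parts, applying the estimate above to the bad part and $\|P[\ixv](x^{k})-c\|\le\|x^{k}-c\|$ (which holds for every $\ixv$ since $c$ lies in every $C_{i}$) to the good part, gives $\|x^{k+1}-c\|^{2}\le\|x^{k}-c\|^{2}-\frac{9}{16}\rho^{2}W_{k}$ whenever $\|x^{k}-x^{*}\|<\rho/4$, while $\|x^{k+1}-c\|^{2}\le\|x^{k}-c\|^{2}$ always; summing over $k$ then gives the bound
\[
\sum_{k\,:\,\|x^{k}-x^{*}\|<\rho/4}W_{k}\ \le\ \frac{16}{9\rho^{2}}\,\|x^{0}-c\|^{2}\ <\ \infty,
\]
which I will refer to as $(\star)$.

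It then remains to deduce from $(\star)$ that $x^{0},x^{1},x^{2},\dots$ converges to $x^{*}$ and that $x^{*}\in C_{j}$ whenever $\sum_{k=0}^{\infty}\sum_{\ixv\in\Ivecs\langle j\rangle}w^{k}(\ixv)=\infty$; both are routine. With $R=\|x^{0}-c\|+\|c-x^{*}\|$ one has $\|P[\ixv](x^{k})-x^{*}\|\le R$ for all $\ixv,k$ (because $\|P[\ixv](x^{k})-c\|\le\|x^{k}-c\|\le\|x^{0}-c\|$), so splitting $x^{k+1}-x^{*}=\sum_{\ixv}w^{k}(\ixv)\bigl(P[\ixv](x^{k})-x^{*}\bigr)$ into good and bad parts (and using $\|P[\ixv](x^{k})-x^{*}\|\le\|x^{k}-x^{*}\|$ for good $\ixv$) gives $\|x^{k+1}-x^{*}\|\le\|x^{k}-x^{*}\|+R\,W_{k}$ for every $k$. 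Given $\delta\in(0,\rho/8)$, I would use $(\star)$ to pick $K_{1}$ with $\sum_{k\ge K_{1},\ \|x^{k}-x^{*}\|<\rho/4}W_{k}<\delta/(2R)$, and then (since $x^{*}$ is a cluster point) pick $k_{0}\ge K_{1}$ with $\|x^{k_{0}}-x^{*}\|<\delta/2$; an induction on $k$ then shows $\|x^{k}-x^{*}\|<\delta$ for all $k\ge k_{0}$, since if this holds for $k_{0},\dots,k$ then all of those iterates lie in $B(x^{*},\rho/4)$ (as $\delta<\rho/8$), so the per-step bound chains to give $\|x^{k+1}-x^{*}\|\le\|x^{k_{0}}-x^{*}\|+R\sum_{l=k_{0}}^{k}W_{l}<\delta/2+\delta/2=\delta$. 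As $\delta$ was arbitrary, $x^{k}\to x^{*}$. Finally, if some index $j$ had $x^{*}\notin C_{j}$ and $\sum_{k}\sum_{\ixv\in\Ivecs\langle j\rangle}w^{k}(\ixv)=\infty$, then $j\in I^{*}$, so every $\ixv\in\Ivecs\langle j\rangle$ is bad; but $\|x^{k}-x^{*}\|<\rho/4$ for all large $k$, so $(\star)$ would force $\sum_{k}\sum_{\ixv\in\Ivecs\langle j\rangle}w^{k}(\ixv)<\infty$, a contradiction. Hence $x^{*}\in C_{j}$ for every index $j$ with an infinite such sum, which is the assertion of the theorem.
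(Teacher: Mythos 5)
Your proof is correct, and its core coincides with the paper's: the same Fej\'er monotonicity (Lemma~\ref{lem-fact-one}), the same cluster-point dichotomy via Lemma~\ref{first-claim}, the same good/bad splitting of index vectors (your ``bad'' vectors are exactly the paper's $\IvecsNotInvolving{x^{*}}$), and the same key geometric ingredient enabled by $\textbf{UASC}(\mathcal{C})$ --- your ``first bad projection'' estimate is the paper's Lemma~\ref{lem-three} with slightly different constants, and the resulting per-iteration decrement is the paper's Lemma~\ref{lem-four}. Where you genuinely diverge is the endgame. The paper first proves that the \emph{full} series $\sum_{k}\sum_{\ixv\in\IvecsNotInvolving{s}}w^{k}(\ixv)$ converges, by a contradiction argument (Lemma~\ref{lem-fund} and Corollaries~\ref{cor-fund0} and \ref{cor-fund}: divergence would permit only finitely many iterates near the cluster point), and then feeds this into the drift bound of Lemma~\ref{finite-third} via Corollary~\ref{cor-converge} to get convergence to $s$. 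You instead sum the decrements only over those iterations at which $x^{k}$ lies in $B(x^{*},\rho/4)$, obtaining the restricted-sum bound $(\star)$, and close with a bootstrap induction that combines $(\star)$ with a per-step drift estimate; summability of all the bad weights then falls out \emph{after} convergence rather than being established beforehand. Your route bypasses Lemma~\ref{lem-fund} and its corollaries and is somewhat more direct, while the paper's intermediate result (full-series convergence prior to convergence of the iterates) is a slightly stronger, reusable statement; both arguments are sound, and your final step recovering the membership assertion (every $\ixv\in\Ivecs\langle j\rangle$ is bad when $x^{*}\notin C_{j}$) matches the paper's concluding observation.
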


This theorem will be proved in the rest of this section.

\subsection{Useful Lemmas}

\label{sec-useful-lemmas}

Our proof of the Third Convergence Theorems will depend on the lemmas
in this subsection. We note that these lemmas are true regardless
of whether or not $\textbf{UASC}(\mathcal{C})$ holds.

For any point $s\in\R^{n}$, let $\IxSetContaining{s}$ and $\IxSetNotContaining{s}$
be the complementary sets of indices and $\IvecsAllInvolving{s}$
and $\IvecsNotInvolving{s}$ the complementary sets of index vectors
that are defined as follows: 
\begin{align*}
\IxSetContaining{s} & \ =\ \{i\in\Z^{+}\mid s\in C_{i}\}\\
\IxSetNotContaining{s} & \ =\ \{i\in\Z^{+}\mid s\not\in C_{i}\}\ =\ \Z^{+}\setminus\IxSetContaining{s}\\
\IvecsAllInvolving{s} & \ =\ \{\ixv\in\Ivecs\mid\IxSetOfVec(\ixv)\subseteq\IxSetContaining{s}\}\\
\IvecsNotInvolving{s} & \ =\ \{\ixv\in\Ivecs\mid\IxSetOfVec(\ixv)\cap\IxSetNotContaining{s}\neq\emptyset\}\ =\ {\textstyle {\bigcup_{i\in\IxSetNotContaining{s}}\Ivecs\langle i\rangle}\ =\ \Ivecs\setminus\IvecsAllInvolving{s}\notag}
\end{align*}
If $s\notin\bigcap\mathcal{C}$, then $\IxSetNotContaining{s}$ and
$\IvecsNotInvolving{s}$ are nonempty; if $s\in\bigcap\mathcal{C}$,
then $\IxSetNotContaining{s}$ and $\IvecsNotInvolving{s}$ are empty.

As $\ixv\in\IvecsAllInvolving{s}$ if and only if $s\in\bigcap_{i\in\IxSetOfVec(\ixv)}C_{i}$,
assertion 1 of Lemma~\ref{gen-fact-zero} implies:

\begin{lemma} \label{lem-two} Let $s,z\in\R^{n}$ and let $\ixv\in\IvecsAllInvolving{s}$.
Then $\|P[\ixv](z)-s\|\leq\|z-s\|$. \hfill{}\qedsymbol \end{lemma}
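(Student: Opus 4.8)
The plan is to obtain this nonexpansiveness inequality as an essentially immediate consequence of assertion~1 of Lemma~\ref{gen-fact-zero}, after first unwinding the definition of $\IvecsAllInvolving{s}$ and then separating out the one degenerate situation that the hypothesis of Lemma~\ref{gen-fact-zero} explicitly excludes. First I would translate the membership $\ixv\in\IvecsAllInvolving{s}$ into a statement about $s$: writing $\ixv=(\ixv_{1},\dots,\ixv_{m})$, the definition of $\IvecsAllInvolving{s}$ says $\IxSetOfVec(\ixv)\subseteq\IxSetContaining{s}$, which means $s\in C_{i}$ for every index $i$ occurring in $\ixv$, i.e. $s\in\bigcap_{i\in\IxSetOfVec(\ixv)}C_{i}=\bigcap_{j=1}^{m}C_{\ixv_{j}}$. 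Thus $s$ lies in every set whose projection operator appears in the composite $P[\ixv]=P_{\ixv_{m}}\circ\dots\circ P_{\ixv_{1}}$.

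Next I would split into two cases according to whether the intersection $\bigcap_{j\in\IxSetOfVec(\ixv)}C_{j}$ equals $\R^{n}$. In the main case $\bigcap_{j\in\IxSetOfVec(\ixv)}C_{j}\neq\R^{n}$, the hypotheses of Lemma~\ref{gen-fact-zero} are exactly the condition just noted together with $s\in\bigcap_{j=1}^{m}C_{\ixv_{j}}$, so they are met for the given $\ixv$, $s$, and $z$; assertion~1 of that lemma then yields $\|P[\ixv](z)-s\|\leq\|z-s\|$ directly, which is the desired conclusion.

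The only remaining case is the degenerate one, $\bigcap_{j\in\IxSetOfVec(\ixv)}C_{j}=\R^{n}$, and this is the single point I expect to require explicit attention (it is precisely why the statement cannot be quoted verbatim from Lemma~\ref{gen-fact-zero}). Here, since each $C_{\ixv_{j}}\subseteq\R^{n}$, the equality forces $C_{\ixv_{j}}=\R^{n}$ for every $j$; and as observed in sec.~\ref{subsec:Notation-and-Definitions}, whenever $C_{i}=\R^{n}$ the operator $P_{i}$ is the identity map. Hence every factor of $P[\ixv]$ is the identity, so $P[\ixv](z)=z$ and $\|P[\ixv](z)-s\|=\|z-s\|$, and the inequality holds (with equality) in this case too. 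Beyond correctly disposing of this degenerate case, the argument presents no genuine obstacle, the whole content being the reduction to Lemma~\ref{gen-fact-zero}.
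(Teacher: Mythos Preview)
Your proof is correct and follows the same route as the paper, which simply notes that $\ixv\in\IvecsAllInvolving{s}$ is equivalent to $s\in\bigcap_{i\in\IxSetOfVec(\ixv)}C_{i}$ and then invokes assertion~1 of Lemma~\ref{gen-fact-zero}. You are in fact slightly more careful than the paper, which glosses over the degenerate case $\bigcap_{j\in\IxSetOfVec(\ixv)}C_{j}=\R^{n}$ that you handle explicitly.
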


The number $2\mspace{1mu}d(x^{0},\bigcap\mathcal{C})$ will be denoted
by $M$. Our next lemma gives an upper bound (in terms of $M$) on
how fast the iterates $x^{k}$ can move away from any given point
$s\in\R^{n}$ as the iteration number $k$ increases. Note that $M$
does not depend on the iteration numbers $r_{1}$ and $r_{2}$ of
this lemma.

\begin{lemma} \label{finite-third} Let $r_{1}$ and $r_{2}$ be
iteration numbers such that $r_{1}<r_{2}$, and let $s\in\R^{n}$.
Then we have that $\|x^{r_{2}}-s\|\leq\|x^{r_{1}}-s\|+M\sum_{k=r_{1}}^{r_{2}-1}{\textstyle {\sum_{\ixv\in\IvecsNotInvolving{s}}w^{k}(\ixv)}}$.
\end{lemma}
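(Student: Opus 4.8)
The plan is to first establish a uniform bound, valid for every index vector $\ixv$ and every iteration number $k$, on how far $P[\ixv](x^{k})$ can be from $x^{k}$: namely, $\|P[\ixv](x^{k})-x^{k}\|\le M$. Since $\bigcap\mathcal{C}$ is a nonempty intersection of closed sets it is closed, so there is a point $c^{*}\in\bigcap\mathcal{C}$ with $\|x^{0}-c^{*}\|=d(x^{0},\bigcap\mathcal{C})=M/2$. Lemma~\ref{lem-fact-one} implies $\|x^{k+1}-c^{*}\|\le\|x^{k}-c^{*}\|$ for all $k$, hence $\|x^{k}-c^{*}\|\le\|x^{0}-c^{*}\|=M/2$ for every $k$. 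Because $c^{*}\in\bigcap\mathcal{C}$ we have $\IvecsAllInvolving{c^{*}}=\Ivecs$, so Lemma~\ref{lem-two} (applied with $c^{*}$ in place of $s$) gives $\|P[\ixv](x^{k})-c^{*}\|\le\|x^{k}-c^{*}\|\le M/2$ for every $\ixv\in\Ivecs$. Combining these via the triangle inequality yields $\|P[\ixv](x^{k})-x^{k}\|\le M$, and therefore $\|P[\ixv](x^{k})-s\|\le\|x^{k}-s\|+M$ for every $\ixv\in\Ivecs$ and every $s\in\R^{n}$.

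Next I would obtain the single-step estimate $\|x^{k+1}-s\|\le\|x^{k}-s\|+M\sum_{\ixv\in\IvecsNotInvolving{s}}w^{k}(\ixv)$. Applying (\ref{Jensen}) to the convex function $g(z)=\|z-s\|$ (equivalently, using the triangle inequality together with conditions A and B) gives $\|x^{k+1}-s\|\le\sum_{\ixv\in\Ivecs}w^{k}(\ixv)\,\|P[\ixv](x^{k})-s\|$. I then split this sum over $\IvecsAllInvolving{s}$ and its complement $\IvecsNotInvolving{s}$: for $\ixv\in\IvecsAllInvolving{s}$, Lemma~\ref{lem-two} gives $\|P[\ixv](x^{k})-s\|\le\|x^{k}-s\|$, while for $\ixv\in\IvecsNotInvolving{s}$ I use the weaker bound $\|P[\ixv](x^{k})-s\|\le\|x^{k}-s\|+M$ from the first paragraph. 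Writing $\sigma_{k}=\sum_{\ixv\in\IvecsNotInvolving{s}}w^{k}(\ixv)$ and using $\sum_{\ixv\in\Ivecs}w^{k}(\ixv)=1$, the split yields $\|x^{k+1}-s\|\le(1-\sigma_{k})\|x^{k}-s\|+\sigma_{k}\bigl(\|x^{k}-s\|+M\bigr)=\|x^{k}-s\|+M\sigma_{k}$, which is exactly the single-step estimate.

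Finally, summing the single-step estimate over $k=r_{1},r_{1}+1,\dots,r_{2}-1$ telescopes to $\|x^{r_{2}}-s\|\le\|x^{r_{1}}-s\|+M\sum_{k=r_{1}}^{r_{2}-1}\sum_{\ixv\in\IvecsNotInvolving{s}}w^{k}(\ixv)$, as required. The only step that requires any genuine idea is the uniform displacement bound $\|P[\ixv](x^{k})-x^{k}\|\le M$ in the first paragraph; once the Fej\'er-type monotonicity of Lemma~\ref{lem-fact-one} and the nonexpansiveness toward $\bigcap\mathcal{C}$ of Lemma~\ref{lem-two} are in hand, the rest is routine bookkeeping with the triangle inequality and the normalization $\sum_{\ixv}w^{k}(\ixv)=1$, so I do not anticipate any serious obstacle.
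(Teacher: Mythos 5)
Your proof is correct and follows essentially the same route as the paper's: the uniform displacement bound $\|P[\ixv](x^{k})-x^{k}\|\leq M$, the single-step estimate obtained by splitting the weighted sum over $\IvecsAllInvolving{s}$ and $\IvecsNotInvolving{s}$, and the telescoping sum. The only cosmetic difference is that you invoke an attained nearest point $c^{*}\in\bigcap\mathcal{C}$ (valid, since $\bigcap\mathcal{C}$ is nonempty and closed in $\R^{n}$), whereas the paper bounds $\|P[\ixv](x^{k})-x^{k}\|$ by $2\|x^{0}-c\|$ for every $c\in\bigcap\mathcal{C}$ and then takes the infimum.
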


\begin{proof} For all index vectors $\ixv$, all iteration numbers
$k$, and every $c\in\bigcap\mathcal{C}$, we see from Lemmas~\ref{gen-fact-zero}
and \ref{lem-fact-one} that $\|P[\ixv](x^{k})-c\|\leq\|x^{k}-c\|\leq\|x^{0}-c\|$,
which (by the triangle inequality) implies $\|P[\ixv](x^{k})-x^{k}\|\leq2\|x^{0}-c\|$.
Since this is true for every point $c$ in $\bigcap\mathcal{C}$,
we have that 
\[
{\textstyle \|P[\ixv](x^{k})-x^{k}\|\leq2\mspace{1mu}d(x^{0},\bigcap\mathcal{C})=M\qquad\text{for all \ensuremath{k\geq0}}}
\]
and hence that 
\begin{equation}
\|P[\ixv](x^{k})-s\|\leq\|x^{k}-s\|+\|P[\ixv](x^{k})-x^{k}\|\leq\|x^{k}-s\|+M\qquad\text{for all \ensuremath{k\geq0}.}\label{finite-eq00}
\end{equation}
Moreover, if $\ixv\in\IvecsAllInvolving{s}$ then Lemma~\ref{lem-two}
implies that 
\begin{equation}
\|P[\ixv](x^{k})-s\|\leq\|x^{k}-s\|\qquad\text{for all \ensuremath{k\geq0}}.\label{finite-eq000}
\end{equation}
Hence 
\begin{alignat*}{2}
\|x^{k+1}-s\| & \ =\ {\textstyle {\left\Vert \left(\sum_{\ixv\in\Ivecs}w^{k}(\ixv)P[\ixv](x^{k})\right)-s\right\Vert }}\\
 & \ \leq\ {\textstyle {\sum_{\ixv\in\Ivecs}w^{k}(\ixv)\|P[\ixv](x^{k})-s\|}} &  & \text{\qquad by convexity of \ensuremath{u\mapsto\|u-s\|}}\\
 & \ =\ {\textstyle {\sum_{\ixv\in\IvecsAllInvolving{s}}w^{k}(\ixv)\|P[\ixv](x^{k})-s\|}}\\
 & \qquad\qquad+{\textstyle {\sum_{\ixv\in\IvecsNotInvolving{s}}w^{k}(\ixv)\|P[\ixv](x^{k})-s\|}}\\
 & \ \leq\ {\textstyle {\sum_{\ixv\in\IvecsAllInvolving{s}}w^{k}(\ixv)\|x^{k}-s\|}}\\
 & \qquad\qquad+{\textstyle {\sum_{\ixv\in\IvecsNotInvolving{s}}w^{k}(\ixv)(\|x^{k}-s\|+M)}} &  & \text{\qquad by (\ref{finite-eq000}) and (\ref{finite-eq00})}\\
 & \ =\ \|x^{k}-s\|\;+\;{\textstyle {M\sum_{\ixv\in\IvecsNotInvolving{s}}w^{k}(\ixv)}} &  & \qquad\text{as \ensuremath{{\textstyle {\sum_{\ixv\in\Ivecs}w^{k}(\ixv)=1}}}}
\end{alignat*}
for all iteration numbers $k$. The lemma follows from this. \end{proof}

\begin{corollary} \label{cor-converge} Let $s$ be the limit of
a convergent subsequence of $x^{0},x^{1},x^{2},\dots$, and suppose
the series $\sum_{k=0}^{\infty}{\textstyle {\sum_{\ixv\in\IvecsNotInvolving{s}}w^{k}(\ixv)}}$
converges. Then the sequence $x^{0},x^{1},x^{2},\dots$ converges
to $s$. \end{corollary}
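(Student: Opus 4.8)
The plan is to show that $x^{0},x^{1},x^{2},\dots$ is Cauchy with limit $s$, using Lemma~\ref{finite-third} to control how far the iterates can drift away from $s$. Write $\sigma_{r}=\sum_{k=r}^{\infty}\sum_{\ixv\in\IvecsNotInvolving{s}}w^{k}(\ixv)$ for the tail sums of the convergent series $\sum_{k=0}^{\infty}\sum_{\ixv\in\IvecsNotInvolving{s}}w^{k}(\ixv)$; then $\sigma_{r}\to 0$ as $r\to\infty$. Let $\varepsilon>0$ be given. Since $s$ is the limit of a convergent subsequence of $(x^{k})$, and since $\sigma_{r}\to 0$, choose an iteration number $r_{1}$ large enough that $\|x^{r_{1}}-s\|<\varepsilon/2$ and $M\sigma_{r_{1}}<\varepsilon/2$. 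By Lemma~\ref{finite-third}, for every iteration number $r_{2}>r_{1}$ we have
\[
\|x^{r_{2}}-s\|\ \leq\ \|x^{r_{1}}-s\|+M\sum_{k=r_{1}}^{r_{2}-1}\sum_{\ixv\in\IvecsNotInvolving{s}}w^{k}(\ixv)\ \leq\ \|x^{r_{1}}-s\|+M\sigma_{r_{1}}\ <\ \varepsilon .
\]
Together with $\|x^{r_{1}}-s\|<\varepsilon/2<\varepsilon$, this shows $\|x^{k}-s\|<\varepsilon$ for all $k\geq r_{1}$, i.e., $x^{k}\to s$.

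The only point requiring a little care is the choice of $r_{1}$: I need a single iteration number that simultaneously makes $\|x^{r_{1}}-s\|$ small and makes the tail $\sigma_{r_{1}}$ small. The tail condition $M\sigma_{r}<\varepsilon/2$ holds for all sufficiently large $r$ because the series $\sum_{k=0}^{\infty}\sum_{\ixv\in\IvecsNotInvolving{s}}w^{k}(\ixv)$ converges by hypothesis; and the condition $\|x^{r}-s\|<\varepsilon/2$ holds for infinitely many $r$ because $(x^{k})$ has a subsequence converging to $s$. So I can pick $r_{1}$ in the intersection of these two sets of indices. This is the ``main obstacle,'' but it is routine: the subsequential-limit hypothesis guarantees infinitely many good indices for the first condition, and all sufficiently large indices are good for the second, so a common index exists.

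Note that the argument uses Lemma~\ref{finite-third} with the fixed base point $r_{1}$ (not $r_{1}=0$), which is exactly why that lemma was stated with an arbitrary starting iteration number $r_{1}$ rather than just $0$; the bound $M=2\,d(x^{0},\bigcap\mathcal{C})$ is independent of $r_{1}$ and $r_{2}$, so no issue arises there. This completes the proof.
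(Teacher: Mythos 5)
Your proof is correct and is essentially the paper's own argument: the paper likewise picks, for each $\epsilon>0$, an index $N(\epsilon)$ beyond which the tail $M\sum_{r\geq N(\epsilon)}\sum_{\ixv\in\IvecsNotInvolving{s}}w^{r}(\ixv)$ is below $\epsilon/2$, then uses the convergent subsequence to find $\kappa(\epsilon)\geq N(\epsilon)$ with $\|x^{\kappa(\epsilon)}-s\|<\epsilon/2$, and concludes via Lemma~\ref{finite-third}. Your handling of the "common index" point and the remark that $M$ is independent of $r_{1},r_{2}$ match the intended use of that lemma exactly.
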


\begin{proof} For all $\epsilon>0$, let $N(\epsilon)$ be an iteration
number such that $M\sum_{r=N(\epsilon)}^{\infty}{\sum_{\ixv\in\IvecsNotInvolving{s}}w^{r}(\ixv)}<\epsilon/2$,
and let $\kappa(\epsilon)$ be an iteration number such that $\kappa(\epsilon)\geq N(\epsilon)$
and $\|x^{\kappa(\epsilon)}-s\|<\epsilon/2$. Then, for all $\epsilon>0$,
we see from Lemma~\ref{finite-third} that $\|x^{k}-s\|<\epsilon$
when $k\geq\kappa(\epsilon)$. \end{proof}

For all $s\in\R^{n}\setminus\bigcap\mathcal{C}$, let us now define:
\[
{\textstyle {d(s)=d(s,\bigcup_{i\in\IxSetNotContaining{s}}C_{i})}}
\]

\begin{lemma}\label{lem-three} Let $c\in\bigcap\mathcal{C}$, let
$s\in\R^{n}\setminus\bigcap\mathcal{C}$, let $\ixv\in\IvecsNotInvolving{s}$,
and let $z$ be a point in $\R^{n}$ such that $\|z-s\|\leq d(s)/2$.
Then we have that $\|P[\ixv](z)-c\|^{2}\leq\|z-c\|^{2}-d(s)^{2}/4$.\end{lemma}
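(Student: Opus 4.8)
The plan is to reduce the claim to the basic contraction estimate of Lemma~\ref{lem-new} and then to locate, within the index vector $\ixv$, a single projection step that moves the running iterate by at least $d(s)/2$. First I would write $\ixv=(\ixv_{1},\dots,\ixv_{m})$ and, following the setup of Lemma~\ref{lem-new}, put $y_{0}=z$, $y_{j}=P[(\ixv_{1},\dots,\ixv_{j})](z)$, and $d_{j}=\|y_{j}-y_{j-1}\|=d(y_{j-1},C_{\ixv_{j}})$ for $1\le j\le m$. Then inequality~(\ref{**}) of Lemma~\ref{lem-new} (with $y=z$ and $j=m$) gives
\[
\|P[\ixv](z)-c\|^{2}\ \le\ \|z-c\|^{2}-(d_{1}^{2}+\dots+d_{m}^{2}),
\]
so it suffices to show $d_{1}^{2}+\dots+d_{m}^{2}\ge d(s)^{2}/4$, and for that it is enough to exhibit a single index $q$ with $d_{q}\ge d(s)/2$.

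Since $\ixv\in\IvecsNotInvolving{s}$ we have $\IxSetOfVec(\ixv)\cap\IxSetNotContaining{s}\neq\emptyset$, so there is a least position $q$ with $\ixv_{q}\in\IxSetNotContaining{s}$; then $s\notin C_{\ixv_{q}}$ while $s\in C_{\ixv_{j}}$ for every $j<q$. I would then argue in two steps. On the one hand, for $1\le j<q$ the index $\ixv_{j}$ lies in $\IxSetContaining{s}$, so Lemma~\ref{lem-two} gives $\|y_{j}-s\|\le\|y_{j-1}-s\|$; chaining these inequalities and using the hypothesis $\|z-s\|\le d(s)/2$ yields $\|y_{q-1}-s\|\le\|y_{0}-s\|=\|z-s\|\le d(s)/2$ (this also covers the degenerate case $q=1$, where $y_{q-1}=z$). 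On the other hand, $y_{q}=P_{\ixv_{q}}(y_{q-1})\in C_{\ixv_{q}}$, and $C_{\ixv_{q}}$ is one of the sets in the union defining $d(s)$, so $\|y_{q}-s\|\ge d(s,C_{\ixv_{q}})\ge d(s)$. Combining the two estimates via the triangle inequality gives $d_{q}=\|y_{q}-y_{q-1}\|\ge\|y_{q}-s\|-\|y_{q-1}-s\|\ge d(s)-d(s)/2=d(s)/2$, hence $d_{1}^{2}+\dots+d_{m}^{2}\ge d_{q}^{2}\ge d(s)^{2}/4$, which completes the proof.

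There is no real obstacle here: the argument is short bookkeeping once one sees that non-expansiveness of the projections onto sets containing $s$ (Lemma~\ref{lem-two}) keeps the running iterate within distance $d(s)/2$ of $s$ right up until the first ``bad'' projection is applied, while that bad projection must land at distance at least $d(s)$ from $s$, forcing a jump of at least $d(s)/2$. The only point needing a moment's care is ensuring the inequality chain $\|y_{q-1}-s\|\le\dots\le\|y_{0}-s\|$ is read correctly when $q-1=0$ (in which case it is the trivial equality $\|y_{0}-s\|=\|z-s\|$); note also that $c$ itself plays no role beyond being the reference point supplied to Lemma~\ref{lem-new}.
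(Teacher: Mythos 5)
Your proof is correct and follows essentially the same route as the paper's: both locate the first index $\ixv_{q}$ of $\ixv$ with $s\notin C_{\ixv_{q}}$, use Lemma~\ref{lem-two} on the prefix to keep the running iterate within $d(s)/2$ of $s$, and show that the single projection onto $C_{\ixv_{q}}$ forces a drop of at least $d(s)^{2}/4$ in the squared distance to $c$. The only cosmetic difference is that you package the decrease via inequality (\ref{**}) of Lemma~\ref{lem-new} and bound the step length $d_{q}$ by the triangle inequality through $s$, whereas the paper applies Proposition~\ref{fact-one} and Lemma~\ref{gen-fact-zero} to the decomposition $P[\ixv'']\circ P_{\ixv_{q}}\circ P[\ixv']$ and bounds the distance from $P[\ixv'](z)$ to $\bigcup_{i\in\IxSetNotContaining{s}}C_{i}$; the two estimates are the same.
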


\begin{proof} Let $\ixv=(\ixv_{1},\ldots,\ixv_{m})$, so that $P[\ixv]=P_{\ixv_{m}}\circ\ldots\circ P_{\ixv_{1}}$.
Let $\ixv_{j}$ be the \emph{first} one of the components $\ixv_{1},\ldots,\ixv_{m}$
for which $s\not\in C_{\ixv_{j}}$; $\ixv_{j}$ exists, since $\ixv\in\IvecsNotInvolving{s}$.
Let $\ixv'=(\ixv_{1},\ldots,\ixv_{j-1})$ and let $\ixv''=(\ixv_{j+1},\ldots,\ixv_{m})$,
so that $P[\ixv]=P[\ixv'']\circ P_{\ixv_{j}}\circ P[\ixv']$. (If
$j=1$ or $j=m$, then $\ixv'$ or $\ixv''$ is the empty sequence
and we define $P[\ixv']$ or $P[\ixv'']$ to be the identity map.)
As $\ixv'\in\IvecsAllInvolving{s}$ if $j\neq1$, we see from Lemma~\ref{lem-two}
that $\|P[\ixv'](z)-s\|\leq\|z-s\|\leq d(s)/2$. So, since $d(s)=d(s,\bigcup_{i\in\IxSetNotContaining{s}}C_{i})$,
it follows from the triangle inequality that the point $P[\ixv'](z)$
satisfies: 
\begin{equation}
{\textstyle {d(P[\ixv'](z),\bigcup_{i\in\IxSetNotContaining{s}}C_{i})\geq d(s)-d(s)/2=d(s)/2}}\label{eq-zero}
\end{equation}
\nop From this it follows that 
\begin{alignat*}{2}
\|P[\ixv](z)-c\|^{2} & \quad=\quad\|P[\ixv''](P_{\ixv_{j}}(P[\ixv'](z)))-c\|^{2}\\
 & \quad\leq\quad\|P_{\ixv_{j}}(P[\ixv'](z))-c\|^{2} &  & \qquad\text{by Lemma~\ref{gen-fact-zero}}\\
 & \quad\leq\quad\|P[\ixv'](z)-c\|^{2}-d(P[\ixv'](z),C_{\ixv_{j}})^{2} &  & \qquad\text{by Proposition~\ref{fact-one}}\\
 & \quad\leq\quad\|z-c\|^{2}-d(P[\ixv'](z),C_{\ixv_{j}})^{2} &  & \qquad\text{by Lemma~\ref{gen-fact-zero}}\\
 & \quad\leq\quad\|z-c\|^{2}-d(P[\ixv'](z),{\textstyle {\bigcup_{i\in\IxSetNotContaining{s}}C_{i}})^{2}} &  & \qquad\text{since \ensuremath{\ixv_{j}\in\IxSetNotContaining{s}}}\\
 & \quad\leq\quad{\textstyle {\|z-c\|^{2}-d(s)^{2}/4}} &  & \qquad\text{by (\ref{eq-zero})}
\end{alignat*}
and so the proof is complete. \end{proof}

\begin{lemma} \label{lem-four} Let $c\in\bigcap\mathcal{C}$ and
let $s\in\R^{n}\setminus\bigcap\mathcal{C}$. Let $k$ be any iteration
number such that $\|x^{k}-s\|\leq d(s)/2$. Then we have that $\|x^{k+1}-c\|^{2}\leq\|x^{k}-c\|^{2}-d(s)^{2}{\textstyle {\sum_{\ixv\in\IvecsNotInvolving{s}}w^{k}(\ixv)}/4}$.\end{lemma}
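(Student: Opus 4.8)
The plan is to derive the inequality by averaging a pointwise estimate over all the index vectors used at iteration $k$. Starting from the Jensen-type bound (\ref{fin-alg}) applied with $a=c$, we have
\[
\|x^{k+1}-c\|^{2}\ \leq\ \sum_{\ixv\in\Ivecs}w^{k}(\ixv)\,\|P[\ixv](x^{k})-c\|^{2},
\]
so it suffices to bound each term $\|P[\ixv](x^{k})-c\|^{2}$, and here I would split the sum according to whether $\ixv$ lies in $\IvecsAllInvolving{s}$ or in the complementary family $\IvecsNotInvolving{s}=\Ivecs\setminus\IvecsAllInvolving{s}$.

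For $\ixv\in\IvecsAllInvolving{s}$ I would use only non-expansiveness with respect to points of $\bigcap\mathcal{C}$: since $c\in\bigcap\mathcal{C}$ we have $\IvecsAllInvolving{c}=\Ivecs$, so Lemma~\ref{lem-two} (with $c$ in the role of $s$) gives $\|P[\ixv](x^{k})-c\|\leq\|x^{k}-c\|$, hence $\|P[\ixv](x^{k})-c\|^{2}\leq\|x^{k}-c\|^{2}$. For $\ixv\in\IvecsNotInvolving{s}$ I would invoke Lemma~\ref{lem-three} with $z=x^{k}$: its hypothesis $\|z-s\|\leq d(s)/2$ is exactly the hypothesis of the present lemma, so it yields the stronger bound $\|P[\ixv](x^{k})-c\|^{2}\leq\|x^{k}-c\|^{2}-d(s)^{2}/4$.

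Substituting these two estimates into the displayed bound and splitting the sum over the two complementary families gives
\[
\|x^{k+1}-c\|^{2}\ \leq\ \sum_{\ixv\in\IvecsAllInvolving{s}}w^{k}(\ixv)\,\|x^{k}-c\|^{2}\;+\;\sum_{\ixv\in\IvecsNotInvolving{s}}w^{k}(\ixv)\Bigl(\|x^{k}-c\|^{2}-\frac{d(s)^{2}}{4}\Bigr),
\]
and since $\sum_{\ixv\in\Ivecs}w^{k}(\ixv)=1$ (condition B), the right-hand side collapses to $\|x^{k}-c\|^{2}-\frac{d(s)^{2}}{4}\sum_{\ixv\in\IvecsNotInvolving{s}}w^{k}(\ixv)$, which is the claimed inequality. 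There is essentially no obstacle here: the lemma is an immediate combination of (\ref{fin-alg}), Lemma~\ref{lem-two}, and Lemma~\ref{lem-three}. The only point needing care is to partition $\Ivecs$ into precisely $\IvecsAllInvolving{s}$ and $\IvecsNotInvolving{s}$ so that the averaging weights recombine to $1$; the hypothesis $\|x^{k}-s\|\le d(s)/2$ is what licenses the use of Lemma~\ref{lem-three} on the ``bad'' part of the sum.
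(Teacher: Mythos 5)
Your proposal is correct and follows essentially the same route as the paper: both apply (\ref{fin-alg}), split the sum over $\IvecsNotInvolving{s}$ and $\IvecsAllInvolving{s}$, use Lemma~\ref{lem-three} (with $z=x^{k}$, licensed by $\|x^{k}-s\|\leq d(s)/2$) on the first part and nonexpansiveness toward $c\in\bigcap\mathcal{C}$ on the second, and recombine via $\sum_{\ixv\in\Ivecs}w^{k}(\ixv)=1$. Your citing Lemma~\ref{lem-two} with $c$ in place of $s$ is just the same fact the paper invokes via assertion 1 of Lemma~\ref{gen-fact-zero}, so there is no substantive difference.
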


\begin{proof} We have that 
\begin{alignat*}{2}
\|x^{k+1}-c\|^{2} & \ \leq\ {\textstyle {\sum_{\ixv\in\IvecsNotInvolving{s}}w^{k}(\ixv)\|P[\ixv](x^{k})-c\|^{2}}}\\
 & \qquad\quad+\quad{\textstyle {\sum_{\ixv\in\IvecsAllInvolving{s}}w^{k}(\ixv)\|P[\ixv](x^{k})-c\|^{2}}} & \quad\quad & \text{by (\ref{fin-alg})}\\
 & \ \leq\ {\textstyle {\sum_{\ixv\in\IvecsNotInvolving{s}}w^{k}(\ixv)\left(\|x^{k}-c\|^{2}-d(s)^{2}/4\right)}}\\
 & \qquad\quad+\quad{\textstyle {\sum_{\ixv\in\IvecsAllInvolving{s}}w^{k}(\ixv)\|x^{k}-c\|^{2}}} &  & \text{by Lemmas \ref{lem-three} and \ref{gen-fact-zero}}\\
 & \ =\ {\textstyle {\|x^{k}-c\|^{2}-\sum_{\ixv\in\IvecsNotInvolving{s}}w^{k}(\ixv)d(s)^{2}/4}} &  & \text{since \ensuremath{{\textstyle {\sum_{\ixv\in\Ivecs}w^{k}(\ixv)=1}}}}
\end{alignat*}
as required. \end{proof}

\begin{corollary} \label{finite-cor} Let $c\in\bigcap\mathcal{C}$,
let $s\in\R^{n}\setminus\bigcap\mathcal{C}$, and let $r_{1}<r_{2}$
be iteration numbers such that: 
\[
\|x^{k}-s\|\leq d(s)/2\quad\text{for each \ensuremath{k}\,in the range \ensuremath{r_{1}\leq k\leq r_{2}-1}}.
\]
Then we have that\quad{}$\|x^{r_{2}}-c\|^{2}\leq\|x^{r_{1}}-c\|^{2}-d(s)^{2}\sum_{k=r_{1}}^{r_{2}-1}{\textstyle {\sum_{\ixv\in\IvecsNotInvolving{s}}w^{k}(\ixv)}/4}$.
\hfill{}\qedsymbol \end{corollary}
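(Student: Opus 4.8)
The plan is to obtain the corollary simply by summing the single-step estimate of Lemma~\ref{lem-four} over the consecutive iterations $k=r_{1},r_{1}+1,\dots,r_{2}-1$ and telescoping the left-hand sides. The crucial point is that the corollary's hypothesis supplies exactly the premise needed to apply Lemma~\ref{lem-four} at each such $k$: for every $k$ in the range $r_{1}\leq k\leq r_{2}-1$ we are given $\|x^{k}-s\|\leq d(s)/2$, which is precisely the condition under which Lemma~\ref{lem-four} yields a decrease bound for $\|x^{k+1}-c\|^{2}$.

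Accordingly, I would first fix $c\in\bigcap\mathcal{C}$ and $s\in\R^{n}\setminus\bigcap\mathcal{C}$ as in the statement, and for each $k$ with $r_{1}\leq k\leq r_{2}-1$ apply Lemma~\ref{lem-four} to obtain
\[
\|x^{k+1}-c\|^{2}-\|x^{k}-c\|^{2}\ \leq\ -\,d(s)^{2}{\textstyle \sum_{\ixv\in\IvecsNotInvolving{s}}w^{k}(\ixv)}/4.
\]
Summing this inequality over $k=r_{1},\dots,r_{2}-1$, the left-hand side telescopes to $\|x^{r_{2}}-c\|^{2}-\|x^{r_{1}}-c\|^{2}$, while the right-hand side becomes $-\,d(s)^{2}\sum_{k=r_{1}}^{r_{2}-1}{\textstyle \sum_{\ixv\in\IvecsNotInvolving{s}}w^{k}(\ixv)}/4$. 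Rearranging then gives exactly the claimed bound
\[
\|x^{r_{2}}-c\|^{2}\ \leq\ \|x^{r_{1}}-c\|^{2}-d(s)^{2}\sum_{k=r_{1}}^{r_{2}-1}{\textstyle \sum_{\ixv\in\IvecsNotInvolving{s}}w^{k}(\ixv)}/4.
\]

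There is essentially no obstacle here: the argument is a routine telescoping sum of nonpositive increments. The only thing that must be checked is that Lemma~\ref{lem-four} is legitimately invoked at every summand, and this is guaranteed term-by-term because the corollary assumes $\|x^{k}-s\|\leq d(s)/2$ for \emph{each} $k$ in the stated range, not merely at its endpoints. I would emphasize that no monotonicity or additional control of the intermediate iterates is needed beyond this per-step hypothesis, since each application of Lemma~\ref{lem-four} stands on its own and contributes an independent nonpositive term to the sum.
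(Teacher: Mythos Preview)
Your proposal is correct and matches the paper's intended argument: the corollary is stated with a \qedsymbol\ and no proof precisely because it follows immediately from Lemma~\ref{lem-four} by applying that single-step estimate at each $k$ with $r_{1}\leq k\leq r_{2}-1$ and telescoping, exactly as you describe.
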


\nop From this corollary and Lemma~\ref{finite-third} we now deduce:

\begin{lemma} \label{lem-fund} Suppose $x_{0}\not\in\bigcap\mathcal{C}$,
so that $M>0$. Let $c\in\bigcap\mathcal{C}$, let $s\in\R^{n}\setminus\bigcap\mathcal{C}$,
and let $r_{1}$ and $r_{2}$ be iteration numbers such that $r_{1}<r_{2}$
for which the following conditions hold: 

\begin{lyxlist}{00.000}
\item [{\hspace{2.25em}1.}] \noindent $\|x^{r_{1}}-s\|\leq d(s)/4$. 
\item [{\hspace{2.25em}2.}] $\sum_{k=r_{1}}^{r_{2}-2}{\textstyle {\sum_{\ixv\in\IvecsNotInvolving{s}}w^{k}(\ixv)}\leq d(s)/4M}$. 
\end{lyxlist}
Then $\|x^{r_{2}}-c\|^{2}\leq\|x^{r_{1}}-c\|^{2}-d(s)^{2}\sum_{k=r_{1}}^{r_{2}-1}{\textstyle {\sum_{\ixv\in\IvecsNotInvolving{s}}w^{k}(\ixv)}/4}$.
\end{lemma}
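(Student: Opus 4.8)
The plan is to deduce the lemma from Corollary~\ref{finite-cor}, after first showing (with the help of hypothesis~2 and Lemma~\ref{finite-third}) that the iterates $x^{k}$ stay within distance $d(s)/2$ of $s$ for every $k$ in the range $r_{1}\le k\le r_{2}-1$. The point of the stronger hypothesis~1, which controls $\|x^{r_1}-s\|$ by $d(s)/4$ rather than $d(s)/2$, is precisely to leave room for the drift permitted by hypothesis~2.

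First I would observe that hypothesis~1 already gives $\|x^{r_1}-s\|\le d(s)/4\le d(s)/2$, so the bound $\|x^{k}-s\|\le d(s)/2$ holds at $k=r_1$. Next, for an iteration number $k$ with $r_1<k\le r_2-1$, I would apply Lemma~\ref{finite-third} with the pair $(r_1,k)$ in place of $(r_1,r_2)$, obtaining
\[
\|x^{k}-s\|\ \le\ \|x^{r_1}-s\|\;+\;M\sum_{j=r_1}^{k-1}\,{\textstyle\sum_{\ixv\in\IvecsNotInvolving{s}}}\,w^{j}(\ixv).
\]
Since every term $\sum_{\ixv\in\IvecsNotInvolving{s}}w^{j}(\ixv)$ is nonnegative and $k-1\le r_2-2$, the double sum on the right is at most $\sum_{j=r_1}^{r_2-2}\sum_{\ixv\in\IvecsNotInvolving{s}}w^{j}(\ixv)$, which by hypothesis~2 is at most $d(s)/4M$. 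Combining this with hypothesis~1 yields $\|x^{k}-s\|\le d(s)/4+M\cdot d(s)/4M=d(s)/2$. Hence $\|x^{k}-s\|\le d(s)/2$ for every iteration number $k$ with $r_1\le k\le r_2-1$.

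With this established, the hypotheses of Corollary~\ref{finite-cor} are met for the given $r_1<r_2$, and that corollary gives exactly the asserted inequality
\[
\|x^{r_2}-c\|^{2}\ \le\ \|x^{r_1}-c\|^{2}\;-\;d(s)^{2}\sum_{k=r_1}^{r_2-1}{\textstyle\sum_{\ixv\in\IvecsNotInvolving{s}}}w^{k}(\ixv)\,/\,4.
\]
I do not expect any real obstacle: the only thing requiring care is the bookkeeping of summation ranges — one must verify that the bound of hypothesis~2, whose upper summation limit is $r_2-2$, dominates every partial sum $\sum_{j=r_1}^{k-1}(\cdots)$ that arises for $k\le r_2-1$, which is exactly why the hypothesis is phrased with that limit. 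The degenerate possibility $d(s)=0$ causes no trouble either: then hypotheses~1 and~2 force $x^{r_1}=s$ and all the weights $w^{j}(\ixv)$ with $\ixv\in\IvecsNotInvolving{s}$ and $r_1\le j\le r_2-2$ to vanish, and the claimed conclusion reduces to $\|x^{r_2}-c\|^{2}\le\|x^{r_1}-c\|^{2}$, which follows from Lemma~\ref{lem-fact-one}.
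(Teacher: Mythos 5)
Your proposal is correct and follows essentially the same route as the paper: use hypotheses 1 and 2 together with Lemma~\ref{finite-third} to show $\|x^{k}-s\|\leq d(s)/2$ for all $k$ with $r_{1}\leq k\leq r_{2}-1$, then invoke Corollary~\ref{finite-cor}. Your extra remarks on the summation-range bookkeeping and the degenerate case $d(s)=0$ are harmless but not needed, since the cited lemmas already cover them.
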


\begin{proof} In view of Corollary~\ref{finite-cor}, it is enough
to show $\|x^{k}-s\|\leq d(s)/2$ for each $k$ in the range $r_{1}\leq k\leq r_{2}-1$.
This is indeed the case, because for any such $k$ we have that 
\begin{alignat*}{2}
\|x^{k}-s\| & \quad\leq\quad\|x^{r_{1}}-s\|+{\textstyle {M\sum_{j=r_{1}}^{k-1}\sum_{\ixv\in\IvecsNotInvolving{s}}w^{k}(\ixv)}} &  & \qquad\text{by Lemma~\ref{finite-third}}\\
 & \quad\leq\quad\|x^{r_{1}}-s\|+{\textstyle {M\sum_{j=r_{1}}^{r_{2}-2}\sum_{\ixv\in\IvecsNotInvolving{s}}w^{k}(\ixv)}\notag}\\
 & \quad\leq\quad{\textstyle {d(s)/4+d(s)/4}}\quad=\quad d(s)/2 &  & \qquad\text{by conditions 1 and 2}
\end{alignat*}
as required. \end{proof}

If $\sum_{k=0}^{\infty}{\textstyle {\sum_{\ixv\in\IvecsNotInvolving{s}}w^{k}(\ixv)}}$
diverges then, for any $r_{1}$ that satisfies condition 1 of Lemma~\ref{lem-fund},
how can we choose $r_{2}$ to get the best lower bound for $\|x^{r_{1}}-c\|^{2}-\|x^{r_{2}}-c\|^{2}$
from this lemma? The answer is that we should choose $r_{2}$ to be
the greatest value for which condition~2 of Lemma~\ref{lem-fund}
is satisfied. Most importantly, for that choice of $r_{2}$ we get
a lower bound for the difference $\|x^{r_{1}}-c\|^{2}-\|x^{r_{2}}-c\|^{2}$
which \emph{does not depend on $r_{1}$}. Thinking along these lines,
we deduce the following corollary of the lemma:

\begin{corollary} \label{cor-fund0} Suppose $x_{0}\not\in\bigcap\mathcal{C}$,
so that $M>0$. Let $c\in\bigcap\mathcal{C}$, and let $s\in\R^{n}\setminus\bigcap\mathcal{C}$
be such that the series $\sum_{k=0}^{\infty}{\textstyle {\sum_{\ixv\in\IvecsNotInvolving{s}}w^{k}(\ixv)}}$
diverges. Then for any iteration number $\kappa$ such that 
\begin{equation}
{\textstyle {\|x^{\kappa}-s\|\leq d(s)/4}}\label{eq:pre}
\end{equation}
the following inequality holds for all sufficiently large iteration
numbers $\kappa'$: 
\begin{equation}
{\textstyle {\|x^{\kappa'}-c\|^{2}\leq\|x^{\kappa}-c\|^{2}-d(s)^{3}/16M}}\label{eq:post}
\end{equation}
\end{corollary}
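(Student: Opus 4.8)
The plan is to apply Lemma~\ref{lem-fund} with $r_{1}=\kappa$, choosing $r_{2}$ exactly as the discussion preceding the corollary suggests: as the \emph{largest} iteration number for which condition~2 of that lemma holds. Concretely, since $\sum_{k=0}^{\infty}\sum_{\ixv\in\IvecsNotInvolving{s}}w^{k}(\ixv)$ diverges, so does its tail from $\kappa$ onward, hence the partial sums $\sigma_{j}=\sum_{k=\kappa}^{j}\sum_{\ixv\in\IvecsNotInvolving{s}}w^{k}(\ixv)$ (with the empty sum $\sigma_{\kappa-1}=0$) increase without bound; so there is a largest integer $j^{*}\geq\kappa-1$ with $\sigma_{j^{*}}\leq d(s)/4M$. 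I would set $r_{2}=j^{*}+2$. Then $r_{2}\geq\kappa+1>\kappa=r_{1}$, condition~1 of Lemma~\ref{lem-fund} is exactly the hypothesis~(\ref{eq:pre}), and condition~2 reads $\sigma_{r_{2}-2}=\sigma_{j^{*}}\leq d(s)/4M$, which holds by the choice of $j^{*}$.

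The key point extracted from the maximality of $j^{*}$ is that $\sigma_{r_{2}-1}=\sigma_{j^{*}+1}>d(s)/4M$, i.e.
\[
\sum_{k=r_{1}}^{r_{2}-1}\sum_{\ixv\in\IvecsNotInvolving{s}}w^{k}(\ixv)\ >\ \frac{d(s)}{4M}.
\]
Substituting $(r_{1},r_{2})=(\kappa,\,j^{*}+2)$ into the conclusion of Lemma~\ref{lem-fund} and using this bound gives
\[
\|x^{r_{2}}-c\|^{2}\ \leq\ \|x^{\kappa}-c\|^{2}-\frac{d(s)^{2}}{4}\sum_{k=r_{1}}^{r_{2}-1}\sum_{\ixv\in\IvecsNotInvolving{s}}w^{k}(\ixv)\ <\ \|x^{\kappa}-c\|^{2}-\frac{d(s)^{3}}{16M},
\]
so (\ref{eq:post}) holds with $\kappa'=r_{2}$.

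To finish I would upgrade this single instance to all sufficiently large $\kappa'$ using the fact (a consequence of Lemma~\ref{lem-fact-one}) that $k\mapsto\|x^{k}-c\|^{2}$ is nonincreasing: for every $\kappa'\geq r_{2}$ we get $\|x^{\kappa'}-c\|^{2}\leq\|x^{r_{2}}-c\|^{2}\leq\|x^{\kappa}-c\|^{2}-d(s)^{3}/16M$, which is the claim. I do not expect a genuine obstacle here; the substantive estimate was already isolated in Lemma~\ref{lem-fund}, and what remains is the bookkeeping of picking $r_{2}$ extremal -- so that the decrease $d(s)^{3}/16M$ does not depend on $\kappa$ -- together with the trivial monotonicity extension. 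The one spot needing a little care is verifying the inequalities tied to $r_{2}$ (that $r_{1}<r_{2}$, that condition~2 holds at $r_{2}$, and that the partial sum over $k\in\{r_{1},\dots,r_{2}-1\}$ strictly exceeds $d(s)/4M$), all of which follow directly from the definition of $j^{*}$ as the largest index with $\sigma_{j^{*}}\leq d(s)/4M$.
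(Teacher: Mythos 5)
Your proposal is correct and follows essentially the same route as the paper: the paper also takes $r_{1}=\kappa$ and picks $r_{2}$ as the first iteration number with $\sum_{k=r_{1}}^{r_{2}-2}\sum_{\ixv\in\IvecsNotInvolving{s}}w^{k}(\ixv)\leq d(s)/4M$ but $\sum_{k=r_{1}}^{r_{2}-1}\sum_{\ixv\in\IvecsNotInvolving{s}}w^{k}(\ixv)>d(s)/4M$ (your maximal $j^{*}$ yields exactly this $r_{2}$), then applies Lemma~\ref{lem-fund} and extends to all larger $\kappa'$ via the monotonicity from Lemma~\ref{lem-fact-one}. No gaps.
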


\begin{proof} Let $\kappa$ be an iteration number for which (\ref{eq:pre})
holds, let $r_{1}=\kappa$, and let $r_{2}$ be the iteration number
such that $r_{2}\geq r_{1}+1$ and both of the following are true:
\begin{align}
{\textstyle {\sum_{k=r_{1}}^{r_{2}-2}{\textstyle {\sum_{\ixv\in\IvecsNotInvolving{s}}w^{k}(\ixv)}}}} & \,\leq\,{\textstyle {d(s)/4M}}\label{eq-proof2}\\
{\textstyle {\sum_{k=r_{1}}^{r_{2}-1}{\textstyle {\sum_{\ixv\in\IvecsNotInvolving{s}}w^{k}(\ixv)}}}} & \,>\,{\textstyle {d(s)/4M}}\label{eq-proof3}
\end{align}
Note that $r_{2}$ must exist, since the series $\sum_{k=0}^{\infty}{\textstyle {\sum_{\ixv\in\IvecsNotInvolving{s}}w^{k}(\ixv)}}$
diverges. It now follows from (\ref{eq:pre}) and (\ref{eq-proof2})
that the conditions of Lemma~\ref{lem-fund} hold. \nop From this,
(\ref{eq-proof3}), and the conclusion of Lemma~\ref{lem-fund} we
see that (\ref{eq:post}) holds for $\kappa'=r_{2}$, whence (\ref{eq:post})
also holds for all $\kappa'>r_{2}$ (by Lemma~\ref{lem-fact-one}).
\end{proof}

\begin{corollary} \label{cor-fund} Suppose $x_{0}\not\in\bigcap\mathcal{C}$,
so that $M>0$. Let $s\in\R^{n}\setminus\bigcap\mathcal{C}$ be such
that $d(s)=d(s,\bigcup_{i\in\IxSetNotContaining{s}}C_{i})>0$ and
the series $\sum_{k=0}^{\infty}{\textstyle {\sum_{\ixv\in\IvecsNotInvolving{s}}w^{k}(\ixv)}}$
diverges. Then there are at most finitely many iteration numbers $k$
such that $\|x^{k}-s\|\leq{\textstyle {d(s)/4}}$. \end{corollary}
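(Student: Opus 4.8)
The plan is to argue by contradiction. Suppose, contrary to the assertion, that there are infinitely many iteration numbers $k$ with $\|x^{k}-s\|\leq d(s)/4$, and fix any point $c\in\bigcap\mathcal{C}$. I will extract from these iteration numbers a strictly increasing sequence $\kappa_{1}<\kappa_{2}<\kappa_{3}<\cdots$ along which $\|x^{\kappa_{j}}-c\|^{2}$ decreases by at least the \emph{fixed} amount $d(s)^{3}/16M$ at every step. Since $d(s)>0$ and $M>0$ by hypothesis, this is a genuine positive constant, so no such sequence can exist (each $\|x^{\kappa_{j}}-c\|^{2}$ is nonnegative); that will be the contradiction.

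To build the sequence I would proceed by induction, the key tool being Corollary~\ref{cor-fund0}, whose hypotheses---namely $x_{0}\notin\bigcap\mathcal{C}$ (hence $M>0$), $s\in\R^{n}\setminus\bigcap\mathcal{C}$, and divergence of $\sum_{k=0}^{\infty}\sum_{\ixv\in\IvecsNotInvolving{s}}w^{k}(\ixv)$---are exactly those assumed here. Let $\kappa_{1}$ be any iteration number with $\|x^{\kappa_{1}}-s\|\leq d(s)/4$. Given $\kappa_{j}$ with $\|x^{\kappa_{j}}-s\|\leq d(s)/4$, Corollary~\ref{cor-fund0} applied with $\kappa=\kappa_{j}$ supplies an iteration number $N_{j}$ such that $\|x^{\kappa'}-c\|^{2}\leq\|x^{\kappa_{j}}-c\|^{2}-d(s)^{3}/16M$ for every $\kappa'\geq N_{j}$. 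Since, by the contradiction hypothesis, the set of iteration numbers $k$ with $\|x^{k}-s\|\leq d(s)/4$ is infinite, I may choose $\kappa_{j+1}$ in that set with $\kappa_{j+1}\geq\max(N_{j},\kappa_{j}+1)$; then $\kappa_{j+1}>\kappa_{j}$, $\|x^{\kappa_{j+1}}-s\|\leq d(s)/4$ (so the induction continues), and $\|x^{\kappa_{j+1}}-c\|^{2}\leq\|x^{\kappa_{j}}-c\|^{2}-d(s)^{3}/16M$.

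Telescoping the last inequality over $j$ gives $\|x^{\kappa_{j}}-c\|^{2}\leq\|x^{\kappa_{1}}-c\|^{2}-(j-1)\,d(s)^{3}/16M$ for all $j\geq 1$, whose right-hand side tends to $-\infty$ because $d(s)^{3}/16M>0$; this contradicts $\|x^{\kappa_{j}}-c\|^{2}\geq 0$ and completes the argument. I do not expect any real obstacle: essentially everything is handed to us by Corollary~\ref{cor-fund0}, and the one point that needs a little care is checking at each inductive step that the newly chosen $\kappa_{j+1}$ simultaneously lies beyond the threshold $N_{j}$ furnished by that corollary and still satisfies $\|x^{\kappa_{j+1}}-s\|\leq d(s)/4$---both of which are guaranteed precisely because $\kappa_{j+1}$ is drawn from the assumed-infinite set of iteration numbers having the latter property.
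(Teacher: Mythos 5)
Your proposal is correct and follows essentially the same route as the paper's own proof: both fix $c\in\bigcap\mathcal{C}$, invoke Corollary~\ref{cor-fund0} at each member of the (assumed infinite) set of iteration numbers $k$ with $\|x^{k}-s\|\leq d(s)/4$ to extract a strictly increasing sequence along which $\|x^{k}-c\|^{2}$ drops by the fixed amount $d(s)^{3}/16M$ at every step, and then contradict the nonnegativity of $\|x^{k}-c\|^{2}$. Your explicit bookkeeping with the threshold $N_{j}$ and the choice $\kappa_{j+1}\geq\max(N_{j},\kappa_{j}+1)$ is just a slightly more detailed rendering of the same argument.
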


\begin{proof} Suppose for contradiction that $\|x^{k}-s\|\leq{\textstyle {d(s)/4}}$
for infinitely many iteration numbers $k$. Let $c$ be any point
in $\bigcap\mathcal{C}$. Then we see from Corollary~\ref{cor-fund0}
that for every iteration number $k$ such that $\|x^{k}-s\|\leq{\textstyle {d(s)/4}}$
there exists an iteration number $k'>k$ such that both of the following
are true: 
\begin{gather*}
\|x^{k'}-s\|\leq d(s)/4\\
\|x^{k'}-c\|^{2}\leq\|x^{k}-c\|^{2}-d(s)^{3}/16M
\end{gather*}
Hence there is an infinite sequence $k_{0}<k_{1}<k_{2}<\dots$ such
that the following are true for every $j\geq1$: 
\begin{gather}
\|x^{k_{j}}-s\|\leq{\textstyle {d(s)/4}\notag}\nonumber \\
\|x^{k_{j}}-c\|^{2}\leq\|x^{k_{j-1}}-c\|^{2}-{\textstyle {d(s)^{3}/16M}}\label{eq:impossible}
\end{gather}
But (\ref{eq:impossible}) cannot hold for all $j\geq1$ because $d(s)^{3}/16M$
is a positive value that is the same for every $j$ and $\|x^{k_{j}}-c\|^{2}$
is nonnegative for all $j$. \end{proof}

\subsection{Proof of the Third Convergence Theorem}

\label{subsec-UASCproof}

We see from Lemma~\ref{lem-fact-one} that $x^{0},x^{1},x^{2},\dots$
is a bounded sequence in $\R^{n}$, and must therefore have a convergent
subsequence. Let $s$ be the limit of a convergent subsequence of
$x^{0},x^{1},x^{2},\dots$.

If $s\in\bigcap\mathcal{C}$, then it follows from Lemma~\ref{first-claim}
that $x^{0},x^{1},x^{2},\dots$ converges to $s$ and the Third Convergence
Theorem holds.

So let us now assume $s\notin\bigcap\mathcal{C}$. Under the hypotheses
of the theorem, $\textbf{UASC}(\mathcal{C})$ holds and therefore
$d(s)>0$. As $s$ is the limit of a convergent subsequence of $x^{0},x^{1},x^{2},\dots$
and $d(s)>0$, it follows from Corollary~\ref{cor-fund} that the
series $\sum_{k=0}^{\infty}\sum_{\ixv\in\IvecsNotInvolving{s}}w^{k}(\ixv)$
cannot diverge. So we see from Corollary~\ref{cor-converge} that
the sequence $x^{0},x^{1},x^{2},\dots$ converges to $s$.

Since $\IvecsNotInvolving{s}\!\supseteq\!\Ivecs\langle i\rangle$
for every index $i$ in $\IxSetNotContaining{s}$, the fact that the
series $\sum_{k=0}^{\infty}\sum_{\ixv\in\IvecsNotInvolving{s}}w^{k}(\ixv)$
does not diverge implies there is no $i\in\IxSetNotContaining{s}$
for which the series $\sum_{k=0}^{\infty}\sum_{\ixv\in\Ivecs\langle i\rangle}w^{k}(\ixv)$
diverges. Equivalently, there is no index $i$ such that $s\notin C_{i}$
for which the series $\sum_{k=0}^{\infty}\sum_{\ixv\in\Ivecs\langle i\rangle}w^{k}(\ixv)$
diverges. \hfill{}\qedsymbol

\section{Fourth Convergence Theorem\label{sec:Fourth-Convergence-Theorem}}

We now consider perturbed versions of sequences that satisfy equation
(\ref{alg}). For each iteration number $k$, let $w^{k}:\Ivecs\rightarrow[0,1]$
be a weight function, let $\bm{v}^{k}$ be a vector in $\R^{n}$,
and let $T_{k}:\R^{n}\rightarrow\R^{n}$ be defined by
\[
T_{k}(x)={\textstyle \sum_{\ixv\in\Ivecs}w^{k}(\ixv)P[\ixv](x)\qquad\text{for all \ensuremath{x\in\R^{n}.}}}
\]
Let $x_{*}^{0}$ be an arbitrary point in $\R^{n}$, and let $x_{*}^{1},x_{*}^{2},x_{*}^{3},\dots\in\R^{n}$
be defined by 
\begin{equation}
{\textstyle {x_{*}^{k+1}=T_{k}(x_{*}^{k})+\bm{v}^{k}=\left(\sum_{\ixv\in\Ivecs}w^{k}(\ixv)P[\ixv](x_{*}^{k})\right)+\bm{v}^{k}}\qquad\text{for every \ensuremath{k\geq0}}.}\label{algP2nd}
\end{equation}
Our next convergence theorem says that, if the series $\sum_{i=0}^{\infty}\|\bm{v}^{k}\|$
converges, then our three earlier convergence theorems remain valid
if we substitute $x_{*}^{0},x_{*}^{1},x_{*}^{2},x_{*}^{3},\dots$
for $x^{0},x^{1},x^{2},x^{3},\dots$:

\begin{theorem}[Fourth Convergence Theorem] \label{3rdconv} Suppose
the series $\sum_{k=0}^{\infty}\|\bm{v}^{k}\|$ converges. Then: 
\begin{lyxlist}{00.000}
\item [{\hspace{2em}1.}] If $\bigcap\mathcal{C}$ is an $n$-dimensional
subset of $\R^{n}$ or \emph{$\textbf{UASC}(\mathcal{C})$} holds,
then the sequence $x_{*}^{0},x_{*}^{1},x_{*}^{2},\dots$ converges,
and its limit lies in $C_{j}$ for every index $j$ such that $\sum_{k=0}^{\infty}\sum_{\ixv\in\Ivecs\langle j\rangle}w^{k}(\ixv)=\infty$.
In particular, if $\bigcap\mathcal{C}$ is an $n$-dimensional subset
of $\R^{n}$ or \emph{$\textbf{UASC}(\mathcal{C})$} holds, and $\sum_{k=0}^{\infty}\sum_{\ixv\in\Ivecs\langle j\rangle}w^{k}(\ixv)=\infty$
for every index $j$ such that $C_{j}\neq\R^{n}$, then $x_{*}^{0},x_{*}^{1},x_{*}^{2},\dots$
converges to a point in $\bigcap\mathcal{C}$.
\item [{\hspace{2em}2.}] \noindent If the weight functions $w^{0},w^{1},w^{2},\dots$
satisfy conditions \textbf{C1} and \textbf{C2} of the Second Convergence
Theorem for some $f_{1},f_{2},f_{3},\dots\in[0,1]$ such that $\sum_{r=1}^{\infty}f_{r}=\infty$
and a sequence of iteration numbers $\kappa_{1}<\kappa_{2}<\kappa_{3}<\dots$,
then the sequence $x_{*}^{0},x_{*}^{1},x_{*}^{2},\dots$ converges
to a point in $\bigcap\mathcal{C}$. 
\end{lyxlist}
\end{theorem}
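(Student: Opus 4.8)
The plan is to control the perturbed orbit $x_*^0,x_*^1,x_*^2,\dots$ by comparing it with a whole family of \emph{unperturbed} orbits, one started at each ``restart time'' $N$. Two facts drive the argument. First, each metric projection $P_j$ is nonexpansive as a map $\R^n\to\R^n$ (a standard consequence of the characterization of $P_j(x)$), hence so is every composite $P[\ixv]$, and therefore so is every $T_k$, because $\|T_k(x)-T_k(y)\|\le\sum_{\ixv}w^k(\ixv)\|P[\ixv](x)-P[\ixv](y)\|\le\|x-y\|$; moreover each $P[\ixv]$, and hence each $T_k$, fixes every point of $\bigcap\mathcal C$. Second, since $\sum_k\|\bm{v}^{k}\|<\infty$, the tails $\sigma_N:=\sum_{j=N}^{\infty}\|\bm{v}^{j}\|$ tend to $0$ as $N\to\infty$.

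For each $N\ge0$ I would introduce the unperturbed sequence $y^{N,0}=x_*^N$, $y^{N,k+1}=T_{N+k}(y^{N,k})$; this satisfies (\ref{alg}) with the weight functions $k\mapsto w^{N+k}$. From (\ref{algP2nd}) and the nonexpansiveness of $T_{N+k}$,
\[
\|x_*^{N+k+1}-y^{N,k+1}\|\le\|T_{N+k}(x_*^{N+k})-T_{N+k}(y^{N,k})\|+\|\bm{v}^{N+k}\|\le\|x_*^{N+k}-y^{N,k}\|+\|\bm{v}^{N+k}\|,
\]
so, as $y^{N,0}=x_*^N$, we get $\|x_*^{N+k}-y^{N,k}\|\le\sigma_N$ for all $k$. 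The hypotheses of whichever of the first three theorems is being used transfer to $(y^{N,k})_k$: the conditions ``$\bigcap\mathcal C$ is $n$-dimensional'' and ``$\textbf{UASC}(\mathcal C)$'' depend only on $\mathcal C$, while if $w^0,w^1,\dots$ satisfy \textbf{C1},~\textbf{C2} for data $(\kappa_r),(f_r)$ then the shifted weights satisfy them for $\widetilde\kappa_r=\kappa_{R+r-1}-N$, $\widetilde f_r=f_{R+r-1}$, where $R$ is chosen with $\kappa_R\ge N$. Hence the relevant unperturbed theorem (the First or Third in part~1, the Second in part~2) applies to $(y^{N,k})_k$ and yields $y^{N,k}\to z^N$ as $k\to\infty$, where $z^N\in C_j$ for every index $j$ with $\sum_{k}\sum_{\ixv\in\Ivecs\langle j\rangle}w^k(\ixv)=\infty$ (deleting the first $N$ nonnegative terms does not affect divergence), and $z^N\in\bigcap\mathcal C$ in the situation of part~2.

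It remains to glue the $z^N$ together. Since $\|x_*^{N+k}-y^{N,k}\|\le\sigma_N$ and $y^{N,k}\to z^N$, we have $\limsup_{m\to\infty}\|x_*^m-z^N\|\le\sigma_N$; comparing through a common $x_*^m$ gives $\|z^N-z^{N'}\|\le\sigma_N+\sigma_{N'}$, so $(z^N)_N$ is Cauchy, with some limit $z$. Then $\limsup_m\|x_*^m-z\|\le\sigma_N+\|z^N-z\|$ for every $N$, and letting $N\to\infty$ shows $x_*^m\to z$. Because each $C_j$ is closed and every $z^N$ lies in $C_j$ whenever $\sum_{k}\sum_{\ixv\in\Ivecs\langle j\rangle}w^k(\ixv)=\infty$, so does $z$; this is the first assertion of part~1. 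If that series diverges for every $j$ with $C_j\neq\R^n$, then $z\in C_j$ for all such $j$ and trivially for every $j$ with $C_j=\R^n$, so $z\in\bigcap\mathcal C$, which gives the ``in particular'' clause of part~1; and in part~2 each $z^N$, hence $z$, already lies in $\bigcap\mathcal C$.

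The one ingredient not already recorded in the excerpt — and the only place care is needed — is the observation that the $P[\ixv]$ and $T_k$ are nonexpansive as self-maps of $\R^n$, since the lemmas proved earlier only assert the weaker ``attraction toward points of $\bigcap\mathcal C$'' property, and that weaker property alone cannot bound $\|x_*^{N+k}-y^{N,k}\|$. The remaining work is routine bookkeeping: verifying that \textbf{C1} and \textbf{C2} survive the shift $k\mapsto N+k$, that the divergence condition picking out the ``good'' indices $j$ is insensitive to that shift, and then the quasi-Fej\'er/Cauchy estimates above.
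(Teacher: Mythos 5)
Your proposal is correct and follows essentially the same route as the paper: your restarted unperturbed orbits $y^{N,k}$ are exactly the rows $x_N^{N+k}$ of the paper's triangular array, and you use the same three ingredients (nonexpansiveness of $P_i$, $P[\ixv]$ and $T_k$; shift-invariance of the hypotheses of the first three convergence theorems, including the bookkeeping that \textbf{C1} and \textbf{C2} survive the shift; and a summable-tail Cauchy/gluing argument for the row limits, followed by closedness of the $C_j$). The only cosmetic difference is that you bound $\|z^N-z^{N'}\|$ by $\sigma_N+\sigma_{N'}$ via the perturbed orbit rather than by $\sum_{j=N}^{N'-1}\|\bm{v}^j\|$ from row-to-row comparison, which changes nothing.
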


Important examples of iterates satisfying (\ref{algP2nd}) for perturbation
vectors $(\bm{v}^{k})_{k=0}^{\infty}$ such that $\sum_{k=0}^{\infty}\|\bm{v}^{k}\|$
converges are the sequences $(x_{**}^{k})_{k=0}^{\infty}$ defined
by 
\begin{equation}
{\textstyle {x_{**}^{k+1}=T_{k}(x_{**}^{k}+\beta_{k}\bm{u}^{k})=\sum_{\ixv\in\Ivecs}w^{k}(\ixv)P[\ixv](x_{**}^{k}+\beta_{k}\bm{u}^{k})}\qquad\text{for every \ensuremath{k\geq0}}}\label{eq:superiorization}
\end{equation}

\noindent where $(\bm{u}^{k})_{k=0}^{\infty}$ may be any bounded
sequence of vectors and $(\beta_{k})_{k=0}^{\infty}$ any sequence
of scalars such that $\sum_{k=0}^{\infty}|\beta_{k}|<\infty$: Indeed,
if (\ref{eq:superiorization}) holds and we define $\bm{v}^{k}=T_{k}(x_{**}^{k}+\beta_{k}\bm{u}^{k})-T_{k}(x_{**}^{k})$,
then (\ref{algP2nd}) holds with $(x_{**}^{k})_{k=0}^{\infty}$ in
place of $(x_{*}^{k})_{k=0}^{\infty}$ and it will follow from Lemma~\ref{lem:non-expansive-1}
below that $\|\bm{v}^{k}\|\leq\|\beta_{k}\bm{u}^{k}\|$ for every
$k$ (whence $\sum_{k=0}^{\infty}\|\bm{v}^{k}\|$ converges). The
\emph{superiorization} methodology \cite{Cens15,CHJ17} derives iterative
schemes of the form (\ref{eq:superiorization}) from iterative schemes
of the form (\ref{alg}) by choosing vectors $\beta_{k}\bm{u}^{k}$
that steer the sequence of iterates with the aim of improving the
efficacy of the iterative scheme.

The Fourth Convergence Theorem is an immediate consequence of statements
1 and 3 of Proposition~\ref{prop-3rdConvC} below. Our proof of that
proposition will depend on the next three lemmas. We will give a self-contained
argument that deduces Proposition~\ref{prop-3rdConvC} (and hence
our Fourth Convergence Theorem) from these lemmas. But when $\sum_{k=0}^{\infty}\sum_{\ixv\in\Ivecs\langle j\rangle}w^{k}(\ixv)=\infty$
for every index $j$ such that $C_{j}\neq\R^{n}$ (which is always
true under the hypotheses of statement 2 of the theorem) the theorem
follows at once from these lemmas and a result of Butnariu, Reich,
and Zaslavski \cite[Thm. 3.2]{BuRZ08}, while Proposition~\ref{prop-3rdConvC}
follows at once from the lemmas and \cite[Thm. 11]{BaRZ18}. 

For all pairs of nonnegative integers $(k,l)$ such that $k\geq l$,
we define the point $x_{l}^{k}\in\R^{n}$ as follows: 
\begin{alignat}{2}
x_{0}^{0} & \ =\ x_{*}^{0}\label{eq:x00}\\
x_{l}^{k+1} & \ =\ {\textstyle {T_{k}(x_{l}^{k})}} & \qquad & \text{whenever \ensuremath{k\geq l\geq0}}.\label{eq:xlk+1}\\
x_{k+1}^{k+1} & \ =\ x_{k}^{k+1}+\bm{v}^{k} & \qquad & \text{for all \ensuremath{k\geq0}}.\label{eq:xk+1k+1}
\end{alignat}
\nop From (\ref{eq:xk+1k+1}) and the case $l=k$ of (\ref{eq:xlk+1})
we see that $x_{k+1}^{k+1}\ =\ {\textstyle {T_{k}(x_{k}^{k})+\bm{v}^{k}}}$
for all $k\geq0$. It follows from this, (\ref{eq:x00}), and (\ref{algP2nd}),
that: 
\begin{equation}
x_{k}^{k}\ =\ x_{*}^{k}\qquad\text{for all \ensuremath{k\geq0}.}\label{eq:xkk}
\end{equation}
 Figure~\ref{relationships} shows how different points $x_{l}^{k}$
relate to each other.

\begin{figure}
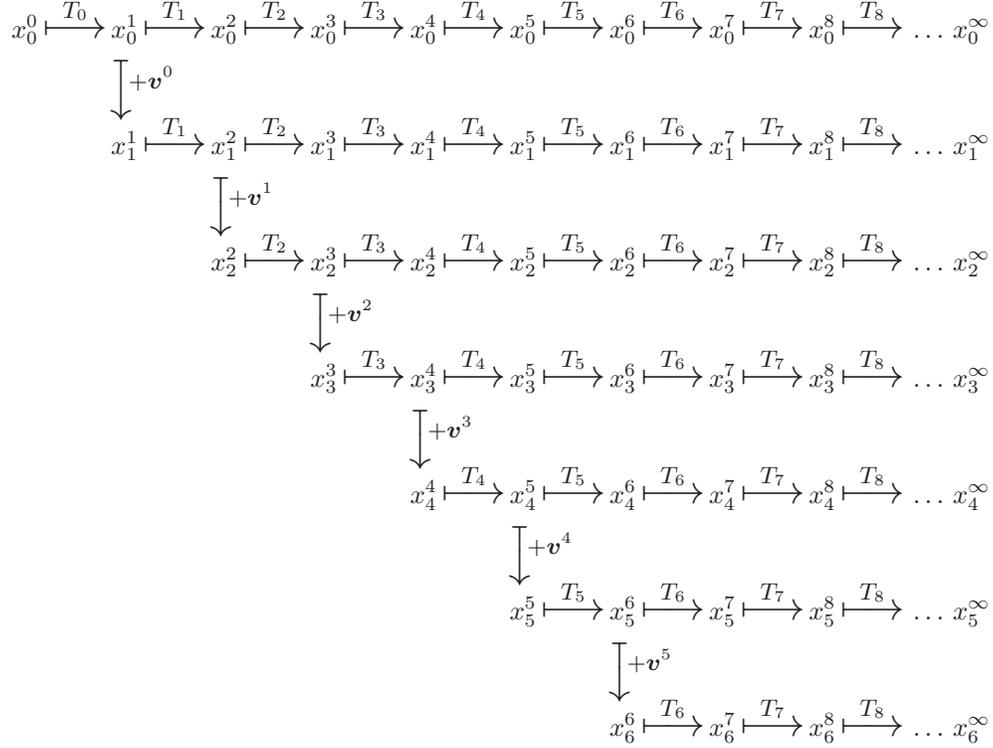

\begin{alignat*}{10}
 & x_{0}^{0}\wmapsto{\text{\small\ensuremath{T_{0}}}} &  & x_{0}^{1}\wmapsto{\text{\small\ensuremath{T_{1}}}} &  & x_{0}^{2}\wmapsto{\text{\small\ensuremath{T_{2}}}} &  & x_{0}^{3}\wmapsto{\text{\small\ensuremath{T_{3}}}} &  & x_{0}^{4}\wmapsto{\text{\small\ensuremath{T_{4}}}} &  & x_{0}^{5}\wmapsto{\text{\small\ensuremath{T_{5}}}} &  & x_{0}^{6}\wmapsto{\text{\small\ensuremath{T_{6}}}} &  & x_{0}^{7}\wmapsto{\text{\small\ensuremath{T_{7}}}} &  & x_{0}^{8}\wmapsto{\text{\small\ensuremath{T_{8}}}}\dots\  &  & x_{0}^{\infty}\\
 &  &  & \downmapsto{\text{\small\ensuremath{+\bm{v}^{0}}}}\\
 &  &  & x_{1}^{1}\wmapsto{\text{\small\ensuremath{\ensuremath{T_{1}}}}} &  & x_{1}^{2}\wmapsto{\text{\small\ensuremath{T_{2}}}} &  & x_{1}^{3}\wmapsto{\text{\small\ensuremath{T_{3}}}} &  & x_{1}^{4}\wmapsto{\text{\small\ensuremath{\ensuremath{T_{4}}}}} &  & x_{1}^{5}\wmapsto{\text{\small\ensuremath{T_{5}}}} &  & x_{1}^{6}\wmapsto{\text{\small\ensuremath{T_{6}}}} &  & x_{1}^{7}\wmapsto{\text{\small\ensuremath{T_{7}}}} &  & x_{1}^{8}\wmapsto{\text{\small\ensuremath{T_{8}}}}\dots\  &  & x_{1}^{\infty}\\
 &  &  &  &  & \downmapsto{\text{\small\ensuremath{+\bm{v}^{1}}}}\\
 &  &  &  &  & x_{2}^{2}\wmapsto{\text{\small\ensuremath{T_{2}}}} &  & x_{2}^{3}\wmapsto{\text{\small\ensuremath{T_{3}}}} &  & x_{2}^{4}\wmapsto{\text{\small\ensuremath{\ensuremath{T_{4}}}}} &  & x_{2}^{5}\wmapsto{\text{\small\ensuremath{T_{5}}}} &  & x_{2}^{6}\wmapsto{\text{\small\ensuremath{T_{6}}}} &  & x_{2}^{7}\wmapsto{\text{\small\ensuremath{T_{7}}}} &  & x_{2}^{8}\wmapsto{\text{\small\ensuremath{T_{8}}}}\dots\  &  & x_{2}^{\infty}\\
 &  &  &  &  &  &  & \downmapsto{\text{\small\ensuremath{+\bm{v}^{2}}}}\\
 &  &  &  &  &  &  & x_{3}^{3}\wmapsto{\text{\small\ensuremath{T_{3}}}} &  & x_{3}^{4}\wmapsto{\text{\small\ensuremath{\ensuremath{T_{4}}}}} &  & x_{3}^{5}\wmapsto{\text{\small\ensuremath{T_{5}}}} &  & x_{3}^{6}\wmapsto{\text{\small\ensuremath{T_{6}}}} &  & x_{3}^{7}\wmapsto{\text{\small\ensuremath{T_{7}}}} &  & x_{3}^{8}\wmapsto{\text{\small\ensuremath{T_{8}}}}\dots\  &  & x_{3}^{\infty}\\
 &  &  &  &  &  &  &  &  & \downmapsto{\text{\small\ensuremath{+\bm{v}^{3}}}}\\
 &  &  &  &  &  &  &  &  & x_{4}^{4}\wmapsto{\text{\small\ensuremath{\ensuremath{T_{4}}}}} &  & x_{4}^{5}\wmapsto{\text{\small\ensuremath{T_{5}}}} &  & x_{4}^{6}\wmapsto{\text{\small\ensuremath{T_{6}}}} &  & x_{4}^{7}\wmapsto{\text{\small\ensuremath{T_{7}}}} &  & x_{4}^{8}\wmapsto{\text{\small\ensuremath{T_{8}}}}\dots\  &  & x_{4}^{\infty}\\
 &  &  &  &  &  &  &  &  &  &  & \downmapsto{\text{\small\ensuremath{+\bm{v}^{4}}}}\\
 &  &  &  &  &  &  &  &  &  &  & x_{5}^{5}\wmapsto{\text{\small\ensuremath{T_{5}}}} &  & x_{5}^{6}\wmapsto{\text{\small\ensuremath{T_{6}}}} &  & x_{5}^{7}\wmapsto{\text{\small\ensuremath{T_{7}}}} &  & x_{5}^{8}\wmapsto{\text{\small\ensuremath{T_{8}}}}\dots\  &  & x_{5}^{\infty}\\
 &  &  &  &  &  &  &  &  &  &  &  &  & \downmapsto{\text{\small\ensuremath{+\bm{v}^{5}}}}\\
 &  &  &  &  &  &  &  &  &  &  &  &  & x_{6}^{6}\wmapsto{\text{\small\ensuremath{T_{6}}}} &  & x_{6}^{7}\wmapsto{\text{\small\ensuremath{T_{7}}}} &  & x_{6}^{8}\wmapsto{\text{\small\ensuremath{T_{8}}}}\dots\  &  & x_{6}^{\infty}
\end{alignat*}
 \caption{\label{relationships} Relationships between different points $x_{l}^{k}$.
Note that $x_{*}^{k}=x_{k}^{k}$ for all $k\geq0$. Each point $x_{l}^{\infty}$
denotes the limit of the sequence $(x_{l}^{l+k})_{k=0}^{\infty}$,
which is convergent under the hypotheses of either part of the Fourth
Convergence Theorem (by Lemmas~\ref{lem-3rdConvA} and \ref{lem-3rdConvB}).}
\end{figure}

\begin{lemma} \label{lem-3rdConvA} Suppose at least one of the following
is true: 
\begin{lyxlist}{00.000}
\item [{\hspace{2.25em}1.}] $\bigcap\mathcal{C}$ is an $n$-dimensional
subset of $\R^{n}$. 
\item [{\hspace{2.25em}2.}] \noindent \emph{$\textbf{UASC}(\mathcal{C})$}
holds. 
\end{lyxlist}
Then for every nonnegative integer $l$ the sequence $\left((T_{l+k}\circ\dots\circ T_{l})(x_{l}^{l})\right)_{k=0}^{\infty}=x_{l}^{l+1},x_{l}^{l+2},x_{l}^{l+3},\dots$
converges, and its limit lies in $C_{j}$ for every index $j$ such
that $\sum_{k=0}^{\infty}\sum_{\ixv\in\Ivecs\langle j\rangle}w^{k}(\ixv)=\infty$.
\end{lemma}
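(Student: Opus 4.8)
The plan is to reduce the lemma to the First and Third Convergence Theorems, which have already been proved. Fix $l\ge 0$, and for every $k\ge 0$ set $\tilde{x}^{\,k}=x_l^{l+k}$ and $\tilde{w}^{\,k}=w^{l+k}$. Each $\tilde{w}^{\,k}$ is again a weight function on $\Ivecs$ (conditions A and B are preserved under re-indexing), and by (\ref{eq:xlk+1}) we have $\tilde{x}^{\,k+1}=T_{l+k}(\tilde{x}^{\,k})=\sum_{\ixv\in\Ivecs}\tilde{w}^{\,k}(\ixv)\,P[\ixv](\tilde{x}^{\,k})$ for every $k\ge 0$. In other words, $\tilde{x}^{\,0},\tilde{x}^{\,1},\tilde{x}^{\,2},\dots$ is exactly a sequence of the form (\ref{alg}), with seed point $\tilde{x}^{\,0}=x_l^l$ and weight functions $(\tilde{w}^{\,k})_{k\ge 0}$; its terms of index $\ge 1$ are precisely the sequence $x_l^{l+1},x_l^{l+2},x_l^{l+3},\dots$ named in the lemma, so convergence of $(\tilde{x}^{\,k})_{k\ge 0}$ is equivalent to convergence of that sequence, with the same limit.

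Next I would invoke whichever of the earlier theorems applies. If hypothesis 1 holds, then $\bigcap\mathcal{C}$ contains an $n$-dimensional simplex, so the First Convergence Theorem (Theorem~\ref{firstCT}), applied to $(\tilde{x}^{\,k})_{k\ge 0}$ and $(\tilde{w}^{\,k})_{k\ge 0}$, shows that $(\tilde{x}^{\,k})_{k\ge 0}$ converges and that its limit lies in $C_j$ for every index $j$ with $\sum_{k=0}^{\infty}\sum_{\ixv\in\Ivecs\langle j\rangle}\tilde{w}^{\,k}(\ixv)=\infty$. If instead hypothesis 2 holds, then $\textbf{UASC}(\mathcal{C})$ holds and the Third Convergence Theorem (Theorem~\ref{finite-000}), applied to the same data, yields the identical pair of conclusions. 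Note that neither ``$\bigcap\mathcal{C}$ is $n$-dimensional'' nor $\textbf{UASC}(\mathcal{C})$ involves the weight functions or the seed point, so nothing needs to be checked about passing to the shifted data.

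The one bookkeeping point is to reconcile the index condition obtained above with the one in the statement. Since $\sum_{k=0}^{\infty}\sum_{\ixv\in\Ivecs\langle j\rangle}\tilde{w}^{\,k}(\ixv)=\sum_{k=l}^{\infty}\sum_{\ixv\in\Ivecs\langle j\rangle}w^{k}(\ixv)$ differs from $\sum_{k=0}^{\infty}\sum_{\ixv\in\Ivecs\langle j\rangle}w^{k}(\ixv)$ only by the finite quantity $\sum_{k=0}^{l-1}\sum_{\ixv\in\Ivecs\langle j\rangle}w^{k}(\ixv)$ (which is at most $l$, because $\sum_{\ixv\in\Ivecs}w^k(\ixv)=1$ for each $k$), the former series diverges if and only if the latter does. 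Hence the limit of $x_l^{l+1},x_l^{l+2},x_l^{l+3},\dots$ lies in $C_j$ for every index $j$ with $\sum_{k=0}^{\infty}\sum_{\ixv\in\Ivecs\langle j\rangle}w^{k}(\ixv)=\infty$, as required.

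I expect no real obstacle here: the substance of the lemma is entirely contained in the First and Third Convergence Theorems, and the proof is just a re-indexing together with the trivial fact that whether a series of nonnegative terms diverges is unaffected by deleting finitely many initial terms. In a single sentence: for each $l$ the sequence $x_l^l,x_l^{l+1},x_l^{l+2},\dots$ satisfies (\ref{alg}) for the weight functions $(w^{l+k})_{k\ge 0}$, so the lemma is immediate from Theorems~\ref{firstCT} and~\ref{finite-000}.
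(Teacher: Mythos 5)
Your proposal is correct and follows essentially the same route as the paper's own proof: re-index to see that $x_l^l,x_l^{l+1},x_l^{l+2},\dots$ satisfies equation (\ref{alg}) with the weight functions $w^l,w^{l+1},w^{l+2},\dots$, apply the First or Third Convergence Theorem according to which hypothesis holds, and note that $\sum_{k=0}^{\infty}\sum_{\ixv\in\Ivecs\langle j\rangle}w^{l+k}(\ixv)=\infty$ just if $\sum_{k=0}^{\infty}\sum_{\ixv\in\Ivecs\langle j\rangle}w^{k}(\ixv)=\infty$. No gaps.
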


\begin{proof} Let $l$ be any nonnegative integer. We see from Figure~\ref{relationships}
that equation (\ref{alg}) holds when we substitute $x_{l}^{l},x_{l}^{l+1},x_{l}^{l+2},x_{l}^{l+3},\dots$
and $w^{l},w^{l+1},w^{l+2},w^{l+3},\dots$ for $x^{0},x^{1},x^{2},x^{3},\dots$
and $w^{0},w^{1},w^{2},w^{3},\dots$. So it follows from the First
and the Third Convergence Theorems that the sequence $x_{l}^{l},x_{l}^{l+1},x_{l}^{l+2},x_{l}^{l+3},\dots$
converges, and that its limit lies in $C_{j}$ for every index $j$
such that $\sum_{k=0}^{\infty}\sum_{\ixv\in\Ivecs\langle j\rangle}w^{l+k}(\ixv)=\infty$.
As $\sum_{k=0}^{\infty}\sum_{\ixv\in\Ivecs\langle j\rangle}w^{l+k}(\ixv)=\infty$
just if $\sum_{k=0}^{\infty}\sum_{\ixv\in\Ivecs\langle j\rangle}w^{k}(\ixv)=\infty$,
the lemma follows. \end{proof}

\begin{lemma} \label{lem-3rdConvB} Suppose the sequence of weight
functions $(w^{k})_{k=0}^{\infty}$ satisfies conditions \textbf{C1}
and \textbf{C2} of the Second Convergence Theorem for some sequence
$(f_{r})_{r=1}^{\infty}$ of numbers in $[0,1]$ such that $\sum_{r=1}^{\infty}f_{r}=\infty$
and some strictly increasing sequence of iteration numbers $(\kappa_{r})_{r=1}^{\infty}$.
Then for every nonnegative integer $l$ the sequence $\left((T_{l+k}\circ\dots\circ T_{l})(x_{l}^{l})\right)_{k=0}^{\infty}=x_{l}^{l+1},x_{l}^{l+2},x_{l}^{l+3},\dots$
converges to a point in $\bigcap\mathcal{C}$. \end{lemma}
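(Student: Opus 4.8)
The plan is to reduce the statement to the Second Convergence Theorem, exactly as the proof of Lemma~\ref{lem-3rdConvA} reduced its statement to the First and Third Convergence Theorems. Fix a nonnegative integer $l$. As observed in the proof of Lemma~\ref{lem-3rdConvA}, it follows from (\ref{eq:xlk+1}) (see Figure~\ref{relationships}) that the sequence $x_l^l, x_l^{l+1}, x_l^{l+2}, \dots$ satisfies equation~(\ref{alg}) when substituted for $x^0, x^1, x^2, \dots$, with the shifted weight functions $w^l, w^{l+1}, w^{l+2}, \dots$ in place of $w^0, w^1, w^2, \dots$. So it is enough to exhibit numbers $(f_r')_{r=1}^\infty$ in $[0,1]$ with $\sum_{r=1}^\infty f_r' = \infty$ and iteration numbers $\kappa_1' < \kappa_2' < \dots$ for which the shifted weight functions satisfy \textbf{C1} and \textbf{C2}; the Second Convergence Theorem will then give that $x_l^l, x_l^{l+1}, x_l^{l+2}, \dots$ converges to a point of $\bigcap\mathcal{C}$, and hence so does its tail $x_l^{l+1}, x_l^{l+2}, x_l^{l+3}, \dots$.

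To obtain such data I would simply discard the members of $(\kappa_r)_{r=1}^\infty$ that are below $l$ and then shift: let $r_0$ be the least $r \in \Z^+$ with $\kappa_{r_0} \geq l$, and for each $r \in \Z^+$ set $\kappa_r' = \kappa_{r_0 + r - 1} - l$ and $f_r' = f_{r_0 + r - 1}$. Then $(\kappa_r')_{r=1}^\infty$ is a strictly increasing sequence of nonnegative integers (the choice of $r_0$ is exactly what guarantees $\kappa_1' \geq 0$), each $f_r' \in [0,1]$, and $\sum_{r=1}^\infty f_r' = \sum_{r=r_0}^\infty f_r = \infty$ since deleting finitely many terms does not affect divergence. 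Verifying \textbf{C1} and \textbf{C2} for the shifted weight functions is then pure reindexing: for \textbf{C1}, the condition $\kappa_{r+1}' \geq \kappa_r' + 2$ with $f_r' > 0$ is equivalent to $\kappa_{r_0+r} \geq \kappa_{r_0+r-1}+2$ with $f_{r_0+r-1}>0$, and under it $\sum_{k=\kappa_r'}^{\kappa_{r+1}'-2}\maxLength(w^{l+k}) = \sum_{k=\kappa_{r_0+r-1}}^{\kappa_{r_0+r}-2}\maxLength(w^k) \leq 1/f_{r_0+r-1} = 1/f_r'$ by \textbf{C1} for the original data; and for \textbf{C2}, for each index $i$ with $C_i \neq \R^n$ and all sufficiently large $r$ we have $\max_{\kappa_r' \leq k < \kappa_{r+1}'}\sum_{\ixv\in\Ivecs\langle i\rangle}\tfrac{w^{l+k}(\ixv)}{\Position(i,\ixv)} = \max_{\kappa_{r_0+r-1}\leq k<\kappa_{r_0+r}}\sum_{\ixv\in\Ivecs\langle i\rangle}\tfrac{w^k(\ixv)}{\Position(i,\ixv)} \geq f_{r_0+r-1} = f_r'$ by \textbf{C2} for the original data.

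There is no genuine obstacle here beyond the indexing bookkeeping. The only points that need a second's attention are that \textbf{C2} is an ``eventually'' statement whose threshold may depend on the index $i$, so one must note that replacing $r$ by $r_0 + r - 1$ sends ``for all sufficiently large $r$'' to ``for all sufficiently large $r$'' again; and that one must ensure $(\kappa_r')$ really is a sequence of iteration numbers, which is precisely what the choice of $r_0$ delivers. With \textbf{C1} and \textbf{C2} so verified, the lemma follows immediately from the Second Convergence Theorem.
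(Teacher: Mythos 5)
Your proposal is correct and follows essentially the same route as the paper: observe that $x_l^l,x_l^{l+1},\dots$ satisfies (\ref{alg}) with the shifted weight functions $w^{l},w^{l+1},\dots$, then show by reindexing (dropping the finitely many $\kappa_r$ below $l$ and subtracting $l$, with the correspondingly shifted $f_r$) that \textbf{C1} and \textbf{C2} still hold, and invoke the Second Convergence Theorem. The paper's proof does exactly this, merely writing out the reindexing equivalences more formally via its $\rho(l)$, $f_r^{\langle l\rangle}$, and $\kappa_r^{\langle l\rangle}$ notation.
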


\begin{proof} Let $l$ be any nonnegative integer, and $\rho(l)$
any nonnegative integer such that $\kappa_{\rho(l)+1}\geq l$. For
all $r\in\Z^{+},$ we will write $f_{r}^{\langle l\rangle}$ for $f_{\rho(l)+r}$
and write $\kappa_{r}^{\langle l\rangle}$ for $\kappa_{\rho(l)+r}-l$.
Note that $(f_{r}^{\langle l\rangle})_{r=1}^{\infty}$ is a sequence
of numbers in $[0,1]$ such that $\sum_{r=1}^{\infty}f_{r}^{\langle l\rangle}=\infty$.
Moreover, we see from the definition of $\rho(l)$ that $(\kappa_{r}^{\langle l\rangle})_{r=1}^{\infty}$
is a strictly increasing sequence of nonnegative integers (i.e., iteration
numbers). 

\begin{claim*} The sequence $(w^{l+k})_{k=0}^{\infty}$ satisfies
conditions \textbf{C1} and \textbf{C2} of the Second Convergence Theorem
for the sequence $(f_{r}^{\langle l\rangle})_{r=1}^{\infty}$ and
the sequence of iteration numbers $(\kappa_{r}^{\langle l\rangle})_{r=1}^{\infty}$.
\end{claim*}

As we observed in the proof of the previous lemma, equation (\ref{alg})
continues to hold when we substitute $w^{l},w^{l+1},w^{l+2},w^{l+3},\dots$
and $x_{l}^{l},x_{l}^{l+1},x_{l}^{l+2},x_{l}^{l+3},\dots$ for $w^{0},w^{1},w^{2},w^{3},\dots$
and $x^{0},x^{1},x^{2},x^{3},\dots$. So if the Claim is valid then
it will follow from the Second Convergence Theorem that $x_{l}^{l},x_{l}^{l+1},x_{l}^{l+2},x_{l}^{l+3},\dots$
converges to a point in $\bigcap\mathcal{C}$ and the lemma will be
proved. To complete the proof, we proceed to justify the Claim.

For all $i,r\in\Z^{+}$, let $\textbf{C1}[\bm{w}_{0},\bm{f},\bm{\kappa}](r)$,
$\textbf{C1}[\bm{w}_{l},\bm{f}^{\langle l\rangle},\bm{\kappa}^{\langle l\rangle}](r)$,
$\textbf{C2}[\bm{w}_{0},\bm{f},\bm{\kappa}](i,r)$, and $\textbf{C2}[\bm{w}_{l},\bm{f}^{\langle l\rangle},\bm{\kappa}^{\langle l\rangle}](i,r)$
be the conditions defined as follows: 
\begin{alignat*}{1}
\makebox[85pt][l]{\textbf{C1}[\ensuremath{\bm{w}_{0},\bm{f},\bm{\kappa}](r})} & \quad\Leftrightarrow{\textstyle {\quad\sum_{k=\kappa_{r}}^{\kappa_{r+1}-2}\maxLength(w^{k})\leq\frac{1}{f_{r}}}}\text{\,\,\ if\,\,}\ensuremath{f_{r}>0}\\
\makebox[85pt][l]{\textbf{C1}[\ensuremath{\bm{w}_{l},\bm{f}^{\langle l\rangle},\bm{\kappa}^{\langle l\rangle}](r})} & \quad\Leftrightarrow{\textstyle {\quad\sum_{k=\kappa_{r}^{\langle l\rangle}}^{\kappa_{r+1}^{\langle l\rangle}-2}\maxLength(w^{l+k})\leq\frac{1}{f_{r}^{\langle l\rangle}}}}\,\,\text{\ if\,\,\ensuremath{f_{r}^{\langle l\rangle}>0}}\\
\makebox[85pt][l]{\textbf{C2}[\ensuremath{\bm{w}_{0},\bm{f},\bm{\kappa}](i,r})} & \quad\Leftrightarrow{\textstyle {\quad{\displaystyle \max_{\kappa_{r}\leq k<\kappa_{r+1}}}\sum_{\ixv\in\Ivecs\langle i\rangle}\frac{w^{k}(\ixv)}{\Position(i,\ixv)}\geq f_{r}}}\\
\makebox[85pt][l]{\textbf{C2}[\ensuremath{\bm{w}_{l},\bm{f}^{\langle l\rangle},\bm{\kappa}^{\langle l\rangle}](i,r})} & \quad\Leftrightarrow{\quad{\textstyle {\displaystyle \max_{\kappa_{r}^{\langle l\rangle}\leq k<\kappa_{r+1}^{\langle l\rangle}}}\sum_{\ixv\in\Ivecs\langle i\rangle}\frac{w^{l+k}(\ixv)}{\Position(i,\ixv)}\geq f_{r}^{\langle l\rangle}}}
\end{alignat*}
Then the hypothesis that $(w^{k})_{k=0}^{\infty}$ satisfies conditions
\textbf{C1} and \textbf{C2} of the Second Convergence Theorem for
the sequences $(f_{r})_{r=1}^{\infty}$ and $(\kappa_{r})_{r=1}^{\infty}$
is equivalent to the following assertions:
\begin{lyxlist}{00....}
\item [{$\text{\quad}1:$}] \noindent $\textbf{C1}[\bm{w}_{0},\bm{f},\bm{\kappa}](r)$
holds for all $r\in\Z^{+}$.
\item [{\quad{}2:}] For all $i\in\Z^{+}$ such that $C_{i}\neq\R^{n}$,
$\textbf{C2}[\bm{w}_{0},\bm{f},\bm{\kappa}](i,r)$ holds for all sufficiently
large $r\in\Z^{+}$. 
\end{lyxlist}
Assertions 1 and 2 imply the assertions obtained from them by replacing
$\textbf{C1}[\bm{w}_{0},\bm{f},\bm{\kappa}](r)$ and $\textbf{C2}[\bm{w}_{0},\bm{f},\bm{\kappa}](i,r)$
with $\textbf{C1}[\bm{w}_{0},\bm{f},\bm{\kappa}](\rho(l)+r)$ and
$\textbf{C2}[\bm{w}_{0},\bm{f},\bm{\kappa}](i,\rho(l)+r).$ The resulting
assertions are equivalent to:
\begin{lyxlist}{00....}
\item [{\quad{}1$'$:}] \noindent ${\textstyle \text{For all \ensuremath{r\in\Z^{+},}\quad}\sum_{k=\kappa_{\rho(l)+r}}^{\kappa_{\rho(l)+r+1}-2}\maxLength(w^{k})\leq\frac{1}{f_{\rho(l)+r}}}\text{\,\ if\,\,}\ensuremath{f_{\rho(l)+r}>0}$.
\item [{\quad{}2$'$:}] \noindent For all $i\in\Z^{+}$ such that $C_{i}\neq\R^{n}$,
${\displaystyle \max_{\kappa_{\rho(l)+r}\leq k<\kappa_{\rho(l)+r+1}}}\sum_{\ixv\in\Ivecs\langle i\rangle}\frac{w^{k}(\ixv)}{\Position(i,\ixv)}\geq f_{\rho(l)+r}$
for all sufficiently large $r\in\Z^{+}$. 
\end{lyxlist}
\noindent Since the sequence $(w^{k})_{\kappa_{\rho(l)+r}\leq k<\kappa_{\rho(l)+r+1}}$
is the same as $(w^{l+k})_{\kappa_{r}^{\langle l\rangle}\leq k<\kappa_{r+1}^{\langle l\rangle}}$,
and since $f_{\rho(l)+r}=f_{r}^{\langle l\rangle}$ for all $r\in\Z^{+},$
assertions 1$'$ and 2$'$ are respectively equivalent to:
\begin{lyxlist}{00....}
\item [{\quad{}1$''$:}] \noindent ${\textstyle \ensuremath{\textbf{C1}[\ensuremath{\bm{w}_{l},\bm{f}^{\langle l\rangle},\bm{\kappa}^{\langle l\rangle}](r})}}$
holds for all $r\in\Z^{+}$.
\item [{\quad{}2$''$:}] For all $i\in\Z^{+}$ such that $C_{i}\neq\R^{n}$,
$\textbf{C2}[\ensuremath{\bm{w}_{l},\bm{f}^{\langle l\rangle},\bm{\kappa}^{\langle l\rangle}](i,r})$
holds for all sufficiently large $r\in\Z^{+}$. 
\end{lyxlist}
As assertions 1$''$ and 2$''$ are together equivalent to our Claim,
the proof is complete. \end{proof}

Our next lemma is equivalent to a result that was stated and used
in \cite{BDHK07} and is a generalization of the following well-known
fact, which says that the operators $P_{i}$ are nonexpansive: 

\begin{fact} \label{fact:non-expansive-1} For any index $i$ and
all $y,z\in\R^{n}$, $\|P_{i}(y)-P_{i}(z)\|\leq\|y-z\|$. \end{fact}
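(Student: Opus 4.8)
The plan is to derive the \emph{variational (obtuse-angle) characterization} of the projection operator $P_i$ and then combine two instances of it with the Cauchy--Schwarz inequality. First I would dispose of the trivial case $C_i=\R^n$, in which $P_i$ is the identity map and the claimed inequality holds with equality; so for the rest of the argument I would assume $C_i\neq\R^n$.

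The key preliminary step is to record the following consequence of Lemma~\ref{lem-geom}: for every $p\in C_i$ and every $z\in\R^n$ we have $(P_i(z)-p)\cdot(z-P_i(z))\geq 0$. This is exactly the observation already made (for a particular $p$) inside the proof of Proposition~\ref{fact-one}: if this inner product were negative, then Lemma~\ref{lem-geom}, applied with $a=z$, $u=P_i(z)$, and $v=p$, would produce points on the segment joining $P_i(z)$ to $p$ — a segment contained in the convex set $C_i$ — that are strictly closer to $z$ than $P_i(z)$ is, contradicting the definition of $P_i(z)$ as the closest point of $C_i$ to $z$.

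I would then apply this inequality twice: once with $(z,p)=(y,P_i(z))$, which gives $(P_i(y)-P_i(z))\cdot(y-P_i(y))\geq 0$, and once with $(z,p)=(z,P_i(y))$, which gives $(P_i(z)-P_i(y))\cdot(z-P_i(z))\geq 0$. Adding these two inequalities and rearranging yields
\[
(P_i(y)-P_i(z))\cdot(y-z)\ \geq\ \|P_i(y)-P_i(z)\|^2 .
\]
By the Cauchy--Schwarz inequality the left-hand side is at most $\|P_i(y)-P_i(z)\|\,\|y-z\|$, so $\|P_i(y)-P_i(z)\|^2\leq\|P_i(y)-P_i(z)\|\,\|y-z\|$; dividing by $\|P_i(y)-P_i(z)\|$ when this quantity is nonzero (the conclusion being immediate otherwise) gives $\|P_i(y)-P_i(z)\|\leq\|y-z\|$.

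Since this is a classical fact, no step is a genuine obstacle; the only points requiring a little care are the case distinction on whether $C_i=\R^n$ (so that $\R^n\setminus C_i$ is empty and Proposition~\ref{fact-one} does not apply directly), and the recognition that the variational inequality needed here has essentially already been established, verbatim, within the proof of Proposition~\ref{fact-one}, so that the present proof can simply cite that argument rather than repeat it.
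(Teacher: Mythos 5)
Your proof is correct. It differs in presentation, though not in substance, from what the paper does: the paper simply cites Goebel--Reich (Thm.~3.6 in Ch.~1 of \cite{GoRe84}) and sketches a geometric argument, namely that (by Lemma~\ref{lem-geom}) $y$ and $z$ lie on opposite sides of the open slab bounded by the two hyperplanes orthogonal to $P_{i}(y)-P_{i}(z)$ passing through $P_{i}(y)$ and $P_{i}(z)$, so that $\|y-z\|$ is at least the slab width $\|P_{i}(y)-P_{i}(z)\|$. Your route is the standard algebraic (monotonicity) version of the same idea: the two variational inequalities $(P_{i}(y)-P_{i}(z))\cdot(y-P_{i}(y))\geq 0$ and $(P_{i}(z)-P_{i}(y))\cdot(z-P_{i}(z))\geq 0$ are exactly the statements that $y$ and $z$ lie on the correct sides of those two hyperplanes, and adding them plus Cauchy--Schwarz replaces the paper's ``distance between parallel hyperplanes'' step by projecting $y-z$ onto the direction of $P_{i}(y)-P_{i}(z)$. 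What your version buys is a fully self-contained proof from material already in the paper (the obtuse-angle inequality established, for arbitrary $p\in C_{i}$, inside the proof of Proposition~\ref{fact-one}), at the cost of a little algebra; the paper's version buys brevity by outsourcing to a reference. Your handling of the trivial case $C_{i}=\R^{n}$ and of the division by $\|P_{i}(y)-P_{i}(z)\|$ when it is nonzero is careful and correct, though strictly speaking the variational inequality itself does not need the assumption $C_{i}\neq\R^{n}$, so the case split is dispensable.
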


Fact~\ref{fact:non-expansive-1} follows from Thm.~3.6 in \cite[Ch. 1]{GoRe84};
it can be deduced from the fact that if $P_{i}(y)\neq P_{i}(z)$ then
(as a consequence of Lemma~\ref{lem-geom}) $y$ and $z$ must lie
on opposite sides of the open set bounded by the hyperplanes orthogonal
to the vector $P_{i}(y)-P_{i}(z)$ that contain $P_{i}(y)$ and $P_{i}(z)$. 

\begin{lemma} \label{lem:non-expansive-1} Let $y,z\in\R^{n}$ and
let $w$ be an arbitrary weight function. Then: 
\[
{\textstyle {\|\sum_{\ixv\in\Ivecs}w(\ixv)P[\ixv](y)-\sum_{\ixv\in\Ivecs}w(\ixv)P[\ixv](z)\|\ \leq\ \|y-z\|}}
\]

\noindent In particular, we have that $\|T_{k}(y)-T_{k}(z)\|\leq\|y-z\|$
for all iteration numbers $k$.

\end{lemma}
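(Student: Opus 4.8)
The plan is to reduce the claim to two elementary facts: that each operator $P[\ixv]$ is nonexpansive, and that a convex combination of vectors of norm at most $\|y-z\|$ has norm at most $\|y-z\|$. These are combined via the triangle inequality together with conditions A and B in the definition of a weight function.

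First I would fix an index vector $\ixv=(\ixv_{1},\ixv_{2},\dots,\ixv_{m})\in\Ivecs$ and verify that $\|P[\ixv](y)-P[\ixv](z)\|\leq\|y-z\|$ for all $y,z\in\R^{n}$. Since $P[\ixv]=P_{\ixv_{m}}\circ\cdots\circ P_{\ixv_{1}}$, this is obtained by applying Fact~\ref{fact:non-expansive-1} to each projection in turn: writing $y_{0}=y$, $z_{0}=z$, and $y_{j}=P_{\ixv_{j}}(y_{j-1})$, $z_{j}=P_{\ixv_{j}}(z_{j-1})$ for $1\leq j\leq m$, Fact~\ref{fact:non-expansive-1} gives $\|y_{j}-z_{j}\|\leq\|y_{j-1}-z_{j-1}\|$ for each such $j$, and chaining these inequalities yields $\|P[\ixv](y)-P[\ixv](z)\|=\|y_{m}-z_{m}\|\leq\|y_{0}-z_{0}\|=\|y-z\|$.

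Next I would pass to the weighted average. By condition A the sum $\sum_{\ixv\in\Ivecs}w(\ixv)P[\ixv](\cdot)$ has only finitely many nonzero terms, so $\sum_{\ixv\in\Ivecs}w(\ixv)P[\ixv](y)-\sum_{\ixv\in\Ivecs}w(\ixv)P[\ixv](z)=\sum_{\ixv\in\Ivecs}w(\ixv)\bigl(P[\ixv](y)-P[\ixv](z)\bigr)$, and then the triangle inequality and the nonexpansiveness of each $P[\ixv]$ give
\[
\Bigl\|{\textstyle\sum_{\ixv\in\Ivecs}}w(\ixv)\bigl(P[\ixv](y)-P[\ixv](z)\bigr)\Bigr\|\ \leq\ {\textstyle\sum_{\ixv\in\Ivecs}}w(\ixv)\,\|P[\ixv](y)-P[\ixv](z)\|\ \leq\ {\textstyle\sum_{\ixv\in\Ivecs}}w(\ixv)\,\|y-z\|\ =\ \|y-z\|,
\]
where the final equality uses condition B. The ``in particular'' assertion then follows immediately on taking $w=w^{k}$, since $T_{k}(x)=\sum_{\ixv\in\Ivecs}w^{k}(\ixv)P[\ixv](x)$ by definition.

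I do not expect any genuine obstacle here; the proof is entirely routine. The only points that require a little care are the bookkeeping in the induction that composes the nonexpansive projections $P_{\ixv_{j}}$, and the (trivial) remark that finite support of $w$ legitimizes the rearrangement of the sums.
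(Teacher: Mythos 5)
Your proposal is correct and follows essentially the same route as the paper: establish $\|P[\ixv](y)-P[\ixv](z)\|\leq\|y-z\|$ by chaining Fact~\ref{fact:non-expansive-1} through the composition, then pass to the weighted average via the triangle inequality and the fact that the weights sum to $1$. Your explicit write-out of the chaining and of the finite-support rearrangement merely fills in details the paper states as immediate.
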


\begin{proof} It follows easily from Fact~\ref{fact:non-expansive-1}
that: 
\begin{equation}
\|P[\ixv](y)-P[\ixv](z)\|\ \leq\ \|y-z\|\quad\text{for all \ensuremath{\ixv\in\Ivecs}.}\label{eq:nonexpansive-1}
\end{equation}
Hence we have that 
\begin{alignat*}{2}
{\textstyle {\|\sum_{\ixv\in\Ivecs}w(\ixv)P[\ixv](y)}} & -{\textstyle {\sum_{\ixv\in\Ivecs}w(\ixv)P[\ixv](z)\|}}\\
 & \quad=\ {\textstyle {\|\sum_{\ixv\in\Ivecs}w(\ixv)(P[\ixv](y)-P[\ixv](z))\|}}\\
 & \quad\leq\ {\textstyle {\sum_{\ixv\in\Ivecs}w(\ixv)\|P[\ixv](y)-P[\ixv](z)\|}\qquad} & \text{by the triangle inequality}\\
 & \quad\leq\ {\textstyle {\sum_{\ixv\in\Ivecs}w(\ixv)\|y-z\|}\qquad} & \text{by (\ref{eq:nonexpansive-1})}\\
 & \quad=\ \|y-z\|\qquad & \text{as \ensuremath{{\textstyle {\sum_{\ixv\in\Ivecs}w(\ixv)=1}}}}
\end{alignat*}
which proves the lemma. \end{proof}

When the hypotheses of Lemma~\ref{lem-3rdConvA} or Lemma~\ref{lem-3rdConvB}
hold and $l$ is a nonnegative integer, we define $x_{l}^{\infty}$
by:
\[
x_{l}^{\infty}=\lim_{k\rightarrow\infty}(T_{k-1}\circ\dots\circ T_{l})x_{*}^{l}=\lim_{k\rightarrow\infty}x_{l}^{k}
\]
The existence of the limit $x_{l}^{\infty}$ is guaranteed by the
two lemmas. Now we are ready to state and prove the following proposition,
which contains our Fourth Convergence Theorem: 

\begin{prop} \label{prop-3rdConvC} Suppose at least one of the following
is true:
\begin{lyxlist}{..0.}
\item [{(i)}] Either $\bigcap\mathcal{C}$ is an $n$-dimensional subset
of $\R^{n}$, or $\textbf{UASC}(\mathcal{C})$ holds.
\item [{(ii)}] $w^{0},w^{1},w^{2},\dots$ satisfy conditions \textbf{C1}
and \textbf{C2} of the Second Convergence Theorem for some sequence
$\mbox{\ensuremath{f_{1},f_{2},f_{3},\dots\in[0,1]}}$ such that $\sum_{r=1}^{\infty}f_{r}=\infty$
and some sequence of iteration numbers $\kappa_{1}<\kappa_{2}<\kappa_{3}<\dots$. 
\end{lyxlist}
Then there exists a point $x_{\infty}^{\infty}$ such that all of
the following are true: 
\begin{lyxlist}{00.00...}
\item [{\hspace{2.25em}1.}] \noindent $\lim_{k\rightarrow\infty}x_{*}^{k}=\lim_{l\rightarrow\infty}x_{l}^{\infty}=x_{\infty}^{\infty}$ 
\item [{\hspace{2.25em}2.}] \noindent For $k\geq l\geq0$, $\|x_{*}^{k}-x_{\infty}^{\infty}\|\leq\|(T_{k-1}\circ\dots\circ T_{l})x_{*}^{l}-x_{l}^{\infty}\|+2\sum_{j=l}^{\infty}\|\bm{v}^{j}\|=\|x_{l}^{k}-x_{l}^{\infty}\|+2\sum_{j=l}^{\infty}\|\bm{v}{}^{j}\|$. 
\item [{\hspace{2.25em}3.}] \noindent If (i) holds, then $x_{\infty}^{\infty}\in C_{j}$
for every index $j$ that satisfies $\sum_{k=0}^{\infty}\sum_{\ixv\in\Ivecs\langle j\rangle}w^{k}(\ixv)=\infty$.
If (ii) holds, then $x_{\infty}^{\infty}\in\bigcap\mathcal{C}$.
\end{lyxlist}
\end{prop}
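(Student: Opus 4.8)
The plan is to exploit the nonexpansiveness of the operators $T_{k}$ (Lemma~\ref{lem:non-expansive-1}) to control the differences $\|x_{l+1}^{k}-x_{l}^{k}\|$ between consecutive rows of the grid in Figure~\ref{relationships}, and then to combine this with the convergence of each perturbation-free row that is guaranteed by Lemmas~\ref{lem-3rdConvA} and \ref{lem-3rdConvB}.

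First I would prove that, for every $l\geq 0$ and every $k\geq l+1$,
\[
\|x_{l+1}^{k}-x_{l}^{k}\|\ \leq\ \|\bm{v}^{l}\|.
\]
This holds when $k=l+1$ because $x_{l+1}^{l+1}=x_{l}^{l+1}+\bm{v}^{l}$ by (\ref{eq:xk+1k+1}), and it propagates from $k$ to $k+1$ because $x_{l+1}^{k+1}=T_{k}(x_{l+1}^{k})$ and $x_{l}^{k+1}=T_{k}(x_{l}^{k})$ by (\ref{eq:xlk+1}), so that Lemma~\ref{lem:non-expansive-1} applies. Under hypothesis (i) (respectively (ii)) the limits $x_{l}^{\infty}=\lim_{k\to\infty}x_{l}^{k}$ all exist by Lemma~\ref{lem-3rdConvA} (respectively Lemma~\ref{lem-3rdConvB}), so letting $k\to\infty$ in the displayed bound gives $\|x_{l+1}^{\infty}-x_{l}^{\infty}\|\leq\|\bm{v}^{l}\|$ for every $l$. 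Since $\sum_{k=0}^{\infty}\|\bm{v}^{k}\|<\infty$, the sequence $(x_{l}^{\infty})_{l=0}^{\infty}$ is Cauchy; I let $x_{\infty}^{\infty}$ denote its limit, which already establishes $\lim_{l\to\infty}x_{l}^{\infty}=x_{\infty}^{\infty}$ in statement~1. Telescoping the displayed bound over $j=l,l+1,\dots,k-1$ (each term being legitimate since then $k\geq j+1$) and recalling $x_{k}^{k}=x_{*}^{k}$ gives
\[
\|x_{*}^{k}-x_{l}^{k}\|\ \leq\ \sum_{j=l}^{k-1}\|\bm{v}^{j}\|\ \leq\ \sum_{j=l}^{\infty}\|\bm{v}^{j}\|\qquad(k\geq l),
\]
while summing $\|x_{m+1}^{\infty}-x_{m}^{\infty}\|\leq\|\bm{v}^{m}\|$ over $m\geq l$ gives $\|x_{\infty}^{\infty}-x_{l}^{\infty}\|\leq\sum_{j=l}^{\infty}\|\bm{v}^{j}\|$.

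Statement~2 then follows from the triangle inequality $\|x_{*}^{k}-x_{\infty}^{\infty}\|\leq\|x_{*}^{k}-x_{l}^{k}\|+\|x_{l}^{k}-x_{l}^{\infty}\|+\|x_{l}^{\infty}-x_{\infty}^{\infty}\|$ together with the two bounds just obtained, recalling that $x_{l}^{k}=(T_{k-1}\circ\dots\circ T_{l})x_{*}^{l}$. For the remaining part of statement~1, given $\epsilon>0$ I would choose $l$ with $2\sum_{j=l}^{\infty}\|\bm{v}^{j}\|<\epsilon/2$ and then, using $x_{l}^{k}\to x_{l}^{\infty}$ as $k\to\infty$, pick $K\geq l$ with $\|x_{l}^{k}-x_{l}^{\infty}\|<\epsilon/2$ for all $k\geq K$; statement~2 then yields $\|x_{*}^{k}-x_{\infty}^{\infty}\|<\epsilon$ for $k\geq K$, so $\lim_{k\to\infty}x_{*}^{k}=x_{\infty}^{\infty}$. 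For statement~3 I would use that when (i) holds each $x_{l}^{\infty}$ lies in $C_{j}$ for every index $j$ with $\sum_{k=0}^{\infty}\sum_{\ixv\in\Ivecs\langle j\rangle}w^{k}(\ixv)=\infty$ (Lemma~\ref{lem-3rdConvA}), and that when (ii) holds each $x_{l}^{\infty}$ lies in $\bigcap\mathcal{C}$ (Lemma~\ref{lem-3rdConvB}); since these sets are closed and $x_{l}^{\infty}\to x_{\infty}^{\infty}$, the limit $x_{\infty}^{\infty}$ lies in them too.

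The argument is essentially routine once Figure~\ref{relationships} is read correctly, and I do not anticipate a serious obstacle. The one point needing care is the index bookkeeping in the telescoping step: since $\|x_{l+1}^{k}-x_{l}^{k}\|\leq\|\bm{v}^{l}\|$ is valid only for $k\geq l+1$, in bounding $\|x_{k}^{k}-x_{l}^{k}\|$ one must sum the increments only up to $j=k-1$. Beyond that, everything rests on nonexpansiveness (Lemma~\ref{lem:non-expansive-1}) and the summability of $\sum_{k}\|\bm{v}^{k}\|$, layered on Lemmas~\ref{lem-3rdConvA} and \ref{lem-3rdConvB}; no new geometric input is required.
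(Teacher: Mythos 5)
Your proposal is correct and follows essentially the same route as the paper: the one-step bound $\|x_{l+1}^{k}-x_{l}^{k}\|\leq\|\bm{v}^{l}\|$ via nonexpansiveness (Lemma~\ref{lem:non-expansive-1}), telescoping and passing to the limits $x_{l}^{\infty}$ (which exist by Lemmas~\ref{lem-3rdConvA} and \ref{lem-3rdConvB}), a Cauchy argument for $(x_{l}^{\infty})_{l}$ using summability of $\sum_{k}\|\bm{v}^{k}\|$, the triangle inequality for statement~2, the $\epsilon$-argument for statement~1, and closedness for statement~3. The only differences are cosmetic (you take $k\to\infty$ in the one-step bound before summing, whereas the paper telescopes at finite $k$ first), so nothing further is needed.
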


\begin{proof}As $x_{l+1}^{l+1}-x_{l}^{l+1}=\bm{v}^{l}$ for all $l\geq0$,
we see from (\ref{eq:xlk+1}) (or Fig.~\ref{relationships}) and
Lemma~\ref{lem:non-expansive-1} that 
\begin{equation}
\|x_{l}^{k}-x_{l+1}^{k}\|\leq\|\bm{v}^{l}\|\quad\text{whenever \ensuremath{k>l\geq0}}.
\end{equation}
\nop From this and the triangle inequality we deduce that 
\begin{equation}
\|x_{l}^{k}-x_{m}^{k}\|={\textstyle {\|\sum_{j=l}^{m-1}(x_{j}^{k}-x_{j+1}^{k})\|}\leq{\sum_{j=l}^{m-1}\|x_{j}^{k}-x_{j+1}^{k}\|}}\leq{\textstyle {\sum_{j=l}^{m-1}\|\bm{v}^{j}\|}\,\,\,\text{whenever\,\,\,}\ensuremath{k\geq m\geq l\geq0}}.\label{eq:another_bound-1}
\end{equation}
It follows from (\ref{eq:another_bound-1}) that: 
\begin{alignat}{2}
 & \|x_{l}^{\infty}-x_{m}^{\infty}\|\ \leq\ {\textstyle {\sum_{j=l}^{m-1}\|\bm{v}^{j}\|\ \leq\ \sum_{j=l}^{\infty}\|\bm{v}^{j}\|}} & \text{} & \text{\text{\qquad{whenever\,\,\,\text{\ensuremath{m\geq l\geq0}.}}}}\label{eq:chain1}\\
 & \|x_{*}^{k}-x_{l}^{k}\|\,=\,\|x_{k}^{k}-x_{l}^{k}\|\,\leq\,{\textstyle \sum_{j=l}^{k-1}\|\bm{v}^{j}\|\ \leq\ \sum_{j=l}^{\infty}\|\bm{v}^{j}\|} &  & \text{\qquad{whenever\,\,\,\text{\ensuremath{k\geq l\geq0}.}}}\label{eq:chain2}\\
 & \|x_{*}^{k}-x_{m}^{\infty}\|\ \leq{\textstyle \ \|x_{l}^{k}-x_{l}^{\infty}\|+2\sum_{j=l}^{\infty}\|\bm{v}^{j}\|} &  & \text{\qquad{whenever\,\,\,\text{\ensuremath{\min(m,k)\geq l\geq0}.}}}\label{eq:chain3}
\end{alignat}
Here (\ref{eq:chain3}) follows from (\ref{eq:chain1}), (\ref{eq:chain2}),
and the triangle inequality. Since $\sum_{j=l}^{\infty}\|\bm{v}^{j}\|\rightarrow0$
as $l\rightarrow\infty$, we see from (\ref{eq:chain1}) that $(x_{l}^{\infty})_{l=0}^{\infty}$
is a Cauchy sequence. Let $x_{\infty}^{\infty}$ be the limit of this
sequence, so the second equality of statement 1 holds. Then statement
2 also holds, because of (\ref{eq:chain3}) and the fact that $\lim_{m\rightarrow\infty}x_{m}^{\infty}=x_{\infty}^{\infty}$. 

For every $\epsilon>0$, let $l(\epsilon)$ and $K(\epsilon)$ be
integers such that $\sum_{j=l(\epsilon)}^{\infty}\|\bm{v}^{j}\|<\epsilon/3$
and $\|x_{l(\epsilon)}^{k}-x_{l(\epsilon)}^{\infty}\|<\epsilon/3$
whenever $k>K(\epsilon)$. Then for every $\epsilon>0$ we see (on
putting $l=l(\epsilon)$ in statement 2) that $\|x_{*}^{k}-x_{\infty}^{\infty}\|<\epsilon$
whenever $k>K(\epsilon)$. This establishes statement 1.

In case (i), Lemma~\ref{lem-3rdConvA} tells us $x_{l}^{\infty}\in C_{j}$
for every $l$ and every index $j$ such that $\sum_{k=0}^{\infty}\sum_{\ixv\in\Ivecs\langle j\rangle}w^{k}(\ixv)=\infty$.
This and statement 1 imply $x_{\infty}^{\infty}\in C_{j}$ for every
index $j$ such that $\sum_{k=0}^{\infty}\sum_{\ixv\in\Ivecs\langle j\rangle}w^{k}(\ixv)=\infty$
(as $C_{j}$ is closed for every $j$). In case (ii), Lemma~\ref{lem-3rdConvB}
tells us $x_{l}^{\infty}\in\bigcap\mathcal{C}$ for every $l$. This
and statement 1 imply $x_{\infty}^{\infty}\in\bigcap\mathcal{C}$
(as $\bigcap\mathcal{C}$ is closed). Hence statement 3 holds.\end{proof}

As mentioned earlier, our Fourth Convergence Theorem follows immediately
from statements 1 and 3 of the above proposition.

Statement 2 of the proposition is equivalent to the case of \cite[eqn. (4.5)]{BaRZ18}
in which the Hilbert space $\mathcal{H}$, sequence $(e_{k})_{k=i}^{\infty}$,
and points $x^{k+1}$, $x^{i}$, and $x^{\infty}$ of \cite[eqn. (4.5)]{BaRZ18}
are respectively $\R^{n}$, our sequence $(\|\bm{v}^{k}\|)_{k=i}^{\infty}$,
and our points $x_{*}^{k+1}$, $x_{*}^{i}$, and $x_{\infty}^{\infty}$.
(However, in case (i) of the proposition our points $x_{l}^{\infty}$
need not satisfy the hypothesis of \cite[eqn. (4.5)]{BaRZ18} that
these points be common fixed points of all the $T_{k}$s.) It follows
from statement 2 that, under the hypotheses of either part of the
Fourth Convergence Theorem, the perturbations of (\ref{algP2nd})
preserve the rate of convergence of the iterative scheme (\ref{alg})
in a sense that is explained and illustrated in Remark 13 and Example
16 of \cite{BaRZ18}. This applies, in particular, to perturbations
introduced by superiorization of the iterative scheme (\ref{alg})
\cite[Cor. 14]{BaRZ18}. 

\section{Fifth Convergence Theorem}

Recall from sec.~\ref{subsec:Outline} that the operators $P_{i}^{\epsilon}$
and $P^{\epsilon}[\ixv]$ are defined for every $\epsilon>0$, every
index $i$, and every index vector $\ixv=(\ixv_{1},\ixv_{2},\dots,\ixv_{m})$
as follows: 
\begin{align*}
P_{i}^{\epsilon}(x) & =\begin{cases}
P_{i}(x) & \text{if \ensuremath{\|x-P_{i}(x)\|\geq\epsilon}}\\
x & \text{if \ensuremath{\|x-P_{i}(x)\|<\epsilon}}
\end{cases}\\
P^{\epsilon}[\ixv] & =P_{\ixv_{m}}^{\epsilon}\circ P_{\ixv_{m-1}}^{\epsilon}\circ\dots\circ P_{\ixv_{1}}^{\epsilon}
\end{align*}
Our final convergence theorem deals with sequences that are generated
using the operators $P^{\epsilon}[\ixv]$ rather than the operators
$P[\ixv]$.

\begin{theorem}[Fifth Convergence Theorem] \label{Fifth} Let $x_{\epsilon}^{0}$
be any point in $\R^{n}$, and let $x_{\epsilon}^{1},x_{\epsilon}^{2},x_{\epsilon}^{3}\dots\in\R^{n}$
be the sequence of points defined by 
\begin{equation}
{\textstyle {x_{\epsilon}^{k+1}=\left(\sum_{\ixv\in\Ivecs}w^{k}(\ixv)P^{\epsilon}[\ixv](x_{\epsilon}^{k})\right)+\bm{v}^{k}}\qquad\text{for all \ensuremath{k\geq0}},}\label{algEpsPerturb2}
\end{equation}
where $\bm{v}^{0},\bm{v}^{1},\bm{v}^{2},\dots$ is a sequence of vectors
in $\R^{n}$ such that $\sum_{k=0}^{\infty}\|\bm{v}^{k}\|$ converges.
Then the sequence $x_{\epsilon}^{1},x_{\epsilon}^{2},x_{\epsilon}^{3},\dots$
converges. Moreover, if $p$ is the limit of this sequence then $d(p,C_{j})\leq\epsilon$
for every index $j$ such that $\sum_{k=0}^{\infty}\sum_{\ixv\in\Ivecs\langle j\rangle}w^{k}(\ixv)=\infty$.
\end{theorem}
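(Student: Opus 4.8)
The plan is to exploit the one property of the truncated operators that ordinary projections lack: any step of a string $P^{\epsilon}[\ixv]$ that actually moves the current point moves it by at least $\epsilon$. This quantitative lower bound is what makes the theorem hold with no assumption on the dimension of $\bigcap\mathcal{C}$ or on $\mathcal{C}$, in contrast with the First and Third Convergence Theorems.

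First I would establish the ``quasi-Fej\'er'' behaviour of $P^{\epsilon}[\ixv]$ relative to a fixed $c\in\bigcap\mathcal{C}$. For $\ixv=(\ixv_{1},\dots,\ixv_{m})$ write $z_{0}=x$ and $z_{r}=P^{\epsilon}[(\ixv_{1},\dots,\ixv_{r})](x)$; each $z_{r}$ is either $z_{r-1}$ or $P_{\ixv_{r}}(z_{r-1})$, so in both cases (trivially in the first, and by Proposition~\ref{fact-one} in the second since $c\in C_{\ixv_{r}}$) one gets $\|z_{r}-c\|^{2}\le\|z_{r-1}-c\|^{2}-\|z_{r}-z_{r-1}\|^{2}$. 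Summing gives $\|P^{\epsilon}[\ixv](x)-c\|^{2}\le\|x-c\|^{2}-e(\ixv,x)$, where $e(\ixv,x):=\sum_{r=1}^{m}\|z_{r}-z_{r-1}\|^{2}$. Since each nonzero $\|z_{r}-z_{r-1}\|$ equals a distance $d(z_{r-1},C_{\ixv_{r}})$ which (by the definition of $P^{\epsilon}_{\ixv_{r}}$) is $\ge\epsilon$, we have $\|z_{r}-z_{r-1}\|\le\|z_{r}-z_{r-1}\|^{2}/\epsilon$ for every $r$, whence $\|P^{\epsilon}[\ixv](x)-x\|\le\sum_{r}\|z_{r}-z_{r-1}\|\le e(\ixv,x)/\epsilon$; moreover $e(\ixv,x)\ge\epsilon^{2}$ whenever $P^{\epsilon}[\ixv](x)\ne x$. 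Writing $T^{\epsilon}_{k}(x)=\sum_{\ixv}w^{k}(\ixv)P^{\epsilon}[\ixv](x)$ and $E_{k}=\sum_{\ixv}w^{k}(\ixv)\,e(\ixv,x_{\epsilon}^{k})$, Jensen's inequality (as in~(\ref{Jensen})) together with the triangle inequality then gives $\|T^{\epsilon}_{k}(x_{\epsilon}^{k})-c\|^{2}\le\|x_{\epsilon}^{k}-c\|^{2}-E_{k}$ and $\|T^{\epsilon}_{k}(x_{\epsilon}^{k})-x_{\epsilon}^{k}\|\le E_{k}/\epsilon$.

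Next I would absorb the perturbations directly. Here lies the main technical point: $T^{\epsilon}_{k}$ is \emph{not} nonexpansive (the $P^{\epsilon}_{i}$ are discontinuous), so neither Lemma~\ref{lem:non-expansive-1} nor the auxiliary construction in the proof of Proposition~\ref{prop-3rdConvC} can be quoted verbatim, and the bookkeeping must be run through the quasi-Fej\'er inequality above. Using $x_{\epsilon}^{k+1}=T^{\epsilon}_{k}(x_{\epsilon}^{k})+\bm{v}^{k}$, $E_{k}\ge0$, and $\sum_{k}\|\bm{v}^{k}\|<\infty$, one first gets $\|x_{\epsilon}^{k}-c\|\le D:=\|x_{\epsilon}^{0}-c\|+\sum_{j\ge0}\|\bm{v}^{j}\|$ for all $k$; then squaring $\|x_{\epsilon}^{k+1}-c\|\le\|T^{\epsilon}_{k}(x_{\epsilon}^{k})-c\|+\|\bm{v}^{k}\|$ and telescoping yields $\sum_{k}E_{k}\le\|x_{\epsilon}^{0}-c\|^{2}+2D\sum_{k}\|\bm{v}^{k}\|+\sum_{k}\|\bm{v}^{k}\|^{2}<\infty$. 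Consequently $\sum_{k}\|x_{\epsilon}^{k+1}-x_{\epsilon}^{k}\|\le(1/\epsilon)\sum_{k}E_{k}+\sum_{k}\|\bm{v}^{k}\|<\infty$, so $(x_{\epsilon}^{k})$ is a Cauchy sequence and converges to some point $p$.

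Finally, for the estimate on $p$ I would argue by contraposition. Because $e(\ixv,x_{\epsilon}^{k})\ge\epsilon^{2}$ whenever $P^{\epsilon}[\ixv](x_{\epsilon}^{k})\ne x_{\epsilon}^{k}$, the bound $\sum_{k}E_{k}<\infty$ gives $\sum_{k}W_{k}<\infty$, where $W_{k}=\sum\{\,w^{k}(\ixv)\mid P^{\epsilon}[\ixv](x_{\epsilon}^{k})\ne x_{\epsilon}^{k}\,\}$. Suppose $d(p,C_{j})>\epsilon$; put $\Delta=d(p,C_{j})-\epsilon>0$ and choose $K$ with $\|x_{\epsilon}^{k}-p\|<\Delta$, hence $d(x_{\epsilon}^{k},C_{j})>\epsilon$, for all $k\ge K$. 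For such a $k$ and any $\ixv\in\Ivecs\langle j\rangle$ with $w^{k}(\ixv)>0$: if every step of $P^{\epsilon}[\ixv]$ before the first occurrence of the index $j$ in $\ixv$ leaves the point unchanged, then at position $\Position(j,\ixv)$ the operator $P^{\epsilon}_{j}$ is applied to $x_{\epsilon}^{k}$ itself and, since $d(x_{\epsilon}^{k},C_{j})>\epsilon$, it moves it; so in all cases $P^{\epsilon}[\ixv](x_{\epsilon}^{k})\ne x_{\epsilon}^{k}$. Therefore $\sum_{\ixv\in\Ivecs\langle j\rangle}w^{k}(\ixv)\le W_{k}$ for every $k\ge K$, and summing over $k$ shows $\sum_{k}\sum_{\ixv\in\Ivecs\langle j\rangle}w^{k}(\ixv)<\infty$. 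Equivalently, $d(p,C_{j})\le\epsilon$ whenever $\sum_{k}\sum_{\ixv\in\Ivecs\langle j\rangle}w^{k}(\ixv)=\infty$, which completes the proof.
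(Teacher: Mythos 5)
Your proof is correct, and at bottom it runs on the same mechanism as the paper's: every truncated projection step that actually moves a point decreases its squared distance to any $c\in\bigcap\mathcal{C}$ by at least $\epsilon^{2}$ (Proposition~\ref{fact-one} plus the definition of $P_{i}^{\epsilon}$), the perturbations are absorbed because $\sum_{k}\|\bm{v}^{k}\|<\infty$, one deduces summability of the total weight of the ``active'' strings, hence the Cauchy property, and finally the estimate $d(p,C_{j})\leq\epsilon$ by contraposition. Where you differ is in the bookkeeping. The paper classifies strings \emph{statically}: $\ixv$ is charged at iteration $k$ exactly when it contains an index $i$ with $d(x_{\epsilon}^{k},C_{i})\geq\epsilon$ (the sets $\IvecsEpsNotInvolving{x_{\epsilon}^{k}}$), it proves the $\epsilon^{2}$ decrement for such strings (Corollary~\ref{cor-upperbound5}), bounds each string's displacement crudely by the constant $M_{\star}$ (Lemma~\ref{lem:M-bound}) to get the Cauchy estimate (Lemma~\ref{lem-newer}), and then the last step of the theorem is immediate because $\Ivecs\langle j\rangle\subseteq\IvecsEpsNotInvolving{x_{\epsilon}^{k}}$ as soon as $d(x_{\epsilon}^{k},C_{j})\geq\epsilon$. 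You instead work with the per-string energy $e(\ixv,x)$ and its weighted average $E_{k}$, which buys the sharper displacement bound $\|P^{\epsilon}[\ixv](x)-x\|\leq e(\ixv,x)/\epsilon$ and hence $\sum_{k}\|x_{\epsilon}^{k+1}-x_{\epsilon}^{k}\|<\infty$ directly, at the price of a \emph{dynamic} classification (``which strings actually move''), and that price shows up in the one spot where your write-up is too terse: at the end you conclude ``so in all cases $P^{\epsilon}[\ixv](x_{\epsilon}^{k})\neq x_{\epsilon}^{k}$,'' but your case analysis only shows that \emph{some} step of the string moves, and for these discontinuous operators ``some intermediate step moves'' does not by itself give ``output $\neq$ input.'' The conclusion is nevertheless true and follows from your own inequality: if some step moves then $e(\ixv,x_{\epsilon}^{k})\geq\epsilon^{2}$, so $\|P^{\epsilon}[\ixv](x_{\epsilon}^{k})-c\|^{2}\leq\|x_{\epsilon}^{k}-c\|^{2}-\epsilon^{2}$ and the endpoint cannot equal $x_{\epsilon}^{k}$; alternatively, simply define $W_{k}$ as the total weight of strings in which some step moves, since $E_{k}\geq\epsilon^{2}W_{k}$ still holds and that is all the final argument needs. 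With that one-line repair your argument is a complete and valid proof of the theorem.
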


We mention here that Fact~\ref{fact:non-expansive-1} does not hold
when $P_{i}$ is replaced by $P_{i}^{\epsilon}$: When $C_{i}\neq\R^{n}$
the operator $P_{i}^{\epsilon}$ is \emph{not} non-expansive; indeed,
$P_{i}^{\epsilon}$ is not even continuous at those points $x$ for
which $d(x,C_{i})=\epsilon$. So the argument we used to prove the
Fourth Convergence Theorem cannot be used to deduce the Fifth Convergence
Theorem from the special case of the theorem in which $\bm{v}^{k}=0$
for every $k$.

\subsection{Proof of the Fifth Convergence Theorem}

For every weight function $w$, index vector $\ixv$, and point $x\in\R^{n}$
we define the following complementary sets of index vectors: 
\begin{align*}
\IvecsEpsAllInvolving{x} & \ =\ \{\ixv\in\Ivecs\mid d(x,C_{j})<\epsilon\text{ for all \ensuremath{j\in\IxSetOfVec(\ixv)}}\}\\
\IvecsEpsNotInvolving{x} & \ =\ \{\ixv\in\Ivecs\mid d(x,C_{j})\geq\epsilon\text{ for at least one \ensuremath{j\in\IxSetOfVec(\ixv)}}\}\ =\ \Ivecs\setminus\IvecsEpsAllInvolving{x}
\end{align*}

\begin{lemma} \label{lem-triv-bound5} Let $c$ be a point in $\bigcap\mathcal{C}$,
$x$ a point in $\R^{n}$, and $i$ an index. Then: 
\begin{lyxlist}{00.000}
\item [{\hspace{2.25em}1.}] $\|P_{i}^{\epsilon}(x)-c\|^{2}\leq\|x-c\|^{2}-\epsilon^{2}$
if $d(x,C_{i})\geq\epsilon$. 
\item [{\hspace{2.25em}2.}] $\|P_{i}^{\epsilon}(x)-c\|\leq\|x-c\|$, with
equality if and only if $d(x,C_{i})<\epsilon$. 
\end{lyxlist}
\end{lemma}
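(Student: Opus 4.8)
The plan is a short case analysis according to the piecewise definition of $P_i^\epsilon$, i.e.\ according to whether $d(x,C_i)\geq\epsilon$ or $d(x,C_i)<\epsilon$.

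First I would prove statement~1. Its hypothesis $d(x,C_i)\geq\epsilon>0$ forces $C_i\neq\R^n$, and it also gives $P_i^\epsilon(x)=P_i(x)$ straight from the definition of $P_i^\epsilon$ (since $\|x-P_i(x)\|=d(x,C_i)\geq\epsilon$). Because $c\in\bigcap\mathcal{C}\subseteq C_i$, the elementary projection inequality $\|P_i(x)-c\|^2\leq\|x-c\|^2-d(x,C_i)^2$ is available: this is exactly the case $\ixv=(i)$, $y=x$ of inequality~(\ref{*}) in Lemma~\ref{lem-new} (equivalently, it follows from Proposition~\ref{fact-one} by discarding the nonnegative term $2d(c,\R^n\setminus C_i)\,\|P_i(x)-x\|$). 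Combining it with $d(x,C_i)\geq\epsilon$ yields $\|P_i^\epsilon(x)-c\|^2\leq\|x-c\|^2-\epsilon^2$, which is statement~1.

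For statement~2 I would treat the same two cases and read off both the inequality and the ``if and only if''. If $d(x,C_i)<\epsilon$, then $P_i^\epsilon(x)=x$ by definition, so $\|P_i^\epsilon(x)-c\|=\|x-c\|$ and equality holds. If instead $d(x,C_i)\geq\epsilon$, then $c\in C_i$ gives $\|x-c\|\geq d(x,C_i)\geq\epsilon>0$, hence $\|x-c\|^2-\epsilon^2<\|x-c\|^2$; substituting into statement~1 gives $\|P_i^\epsilon(x)-c\|^2\leq\|x-c\|^2-\epsilon^2<\|x-c\|^2$, so the inequality is strict and equality fails. In either case $\|P_i^\epsilon(x)-c\|\leq\|x-c\|$, with equality exactly when $d(x,C_i)<\epsilon$.

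There is no genuine obstacle in this lemma; the only points worth flagging are that the hypothesis of statement~1 makes $C_i\neq\R^n$ automatic (so that the projection inequality, stated in Proposition~\ref{fact-one} only for proper closed convex subsets of $\R^n$, is legitimately applicable) and that this same hypothesis forces $\|x-c\|>0$, which is precisely what upgrades the weak bound to the strict inequality needed for the ``only if'' direction of statement~2.
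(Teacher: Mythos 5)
Your proof is correct and follows essentially the same route as the paper's: assertion~1 is read off from Proposition~\ref{fact-one} after observing that $d(x,C_i)\geq\epsilon$ gives $P_i^\epsilon(x)=P_i(x)$ (and, as you rightly flag, $C_i\neq\R^n$), and assertion~2 is the same two-case analysis, with strictness in the case $d(x,C_i)\geq\epsilon$ coming from assertion~1. One tiny remark: the strict inequality $\|x-c\|^2-\epsilon^2<\|x-c\|^2$ needs only $\epsilon>0$, so your closing comment that $\|x-c\|>0$ is what upgrades the bound is unnecessary (though harmless).
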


\begin{corollary} \label{cor-upperbound5} Let $c$ be a point in
$\bigcap\mathcal{C}$, $x$ a point in $\R^{n}$, and $\ixv$ an index
vector. Then we have that $\|P^{\epsilon}[\ixv](x)-c\|^{2}\leq\|x-c\|^{2}-\epsilon^{2}$
if $\ixv\in\IvecsEpsNotInvolving{x}$. \hfill{}\qedsymbol \end{corollary}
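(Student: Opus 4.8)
The plan is to decompose $P^{\epsilon}[\ixv]$ into its constituent single-set operators and track the effect of each on the squared distance to $c$, using the two parts of Lemma~\ref{lem-triv-bound5}. Writing $\ixv=(\ixv_{1},\dots,\ixv_{m})$, I would set $y_{0}=x$ and $y_{r}=P_{\ixv_{r}}^{\epsilon}(y_{r-1})$ for $1\leq r\leq m$, so that $y_{m}=P^{\epsilon}[\ixv](x)$. Part~2 of Lemma~\ref{lem-triv-bound5} gives $\|y_{r}-c\|\leq\|y_{r-1}-c\|$ at every step, so the squared distance to $c$ is non-increasing along the chain $y_{0},y_{1},\dots,y_{m}$; the whole task is therefore to exhibit a single step at which it drops by at least $\epsilon^{2}$.

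Since $\ixv\in\IvecsEpsNotInvolving{x}$, by definition there is at least one $j\in\IxSetOfVec(\ixv)$ with $d(x,C_{j})\geq\epsilon$; let $q$ be the least position in $\ixv$ at which this happens, i.e.\ the smallest $q$ with $d(x,C_{\ixv_{q}})\geq\epsilon$, so that $d(x,C_{\ixv_{r}})<\epsilon$ for all $r<q$. The crucial observation, and the one place where the argument must be handled with care, is that the intermediate points are all still equal to $x$ just before step $q$. Indeed, an easy induction shows $y_{r}=x$ for $0\leq r\leq q-1$: if $y_{r-1}=x$ and $r<q$, then $d(y_{r-1},C_{\ixv_{r}})=d(x,C_{\ixv_{r}})<\epsilon$, so the definition of $P_{\ixv_{r}}^{\epsilon}$ forces $y_{r}=P_{\ixv_{r}}^{\epsilon}(y_{r-1})=y_{r-1}=x$.

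The subtlety this resolves is that Part~1 of Lemma~\ref{lem-triv-bound5} requires the distance from the \emph{input} point $y_{q-1}$ to $C_{\ixv_{q}}$ to be at least $\epsilon$, whereas the hypothesis $\ixv\in\IvecsEpsNotInvolving{x}$ only supplies $d(x,C_{\ixv_{q}})\geq\epsilon$. Because $y_{q-1}=x$, these two conditions coincide, and I may invoke Part~1 at step $q$ to obtain $\|y_{q}-c\|^{2}\leq\|y_{q-1}-c\|^{2}-\epsilon^{2}=\|x-c\|^{2}-\epsilon^{2}$. Combining this single drop with the monotonicity supplied by Part~2 over the remaining steps $q+1,\dots,m$ yields $\|P^{\epsilon}[\ixv](x)-c\|^{2}=\|y_{m}-c\|^{2}\leq\|y_{q}-c\|^{2}\leq\|x-c\|^{2}-\epsilon^{2}$, which is exactly the claimed inequality. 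I expect the only genuine obstacle to be precisely the reconciliation just described, namely ensuring that the first ``active'' coordinate $q$ acts on an input that still carries the guaranteed distance $\geq\epsilon$; the fixed-point induction on $y_{0},\dots,y_{q-1}$ is what makes this work, and everything else is bookkeeping.
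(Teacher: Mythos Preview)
Your argument is correct and is precisely the intended justification: the paper states this corollary with a bare \qedsymbol\ as an immediate consequence of Lemma~\ref{lem-triv-bound5}, and the fixed-point induction showing $y_{0}=\dots=y_{q-1}=x$ (so that Part~1 applies at step $q$) followed by the monotonicity of Part~2 on the remaining steps is exactly how one unpacks that. There is nothing to add.
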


\begin{corollary} \label{cor-triv-bound5} Let $c$ be a point in
$\bigcap\mathcal{C}$, $x$ a point in $\R^{n}$, and $\ixv$ an index
vector. Then $\|P^{\epsilon}[\ixv](x)-c\|\leq\|x-c\|$, with equality
if and only if $\ixv\in\IvecsEpsAllInvolving{x}$. \hfill{}\qedsymbol
\end{corollary}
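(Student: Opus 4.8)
The plan is to chain the single-operator estimate of Lemma~\ref{lem-triv-bound5} along the components of $\ixv$ and then read off the equality condition from that chain, invoking Corollary~\ref{cor-upperbound5} to dispose of the harder direction. Write $\ixv=(\ixv_{1},\dots,\ixv_{m})$, set $z_{0}=x$, and for $1\le r\le m$ put $z_{r}=P_{\ixv_{r}}^{\epsilon}(z_{r-1})$, so that $z_{m}=P^{\epsilon}[\ixv](x)$. Since $c\in\bigcap\mathcal{C}\subseteq C_{\ixv_{r}}$ for every $r$, part~2 of Lemma~\ref{lem-triv-bound5} applies at each step (with $z_{r-1}$ in the role of the generic point) and gives $\|z_{r}-c\|\le\|z_{r-1}-c\|$ for $1\le r\le m$. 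Telescoping these inequalities yields $\|P^{\epsilon}[\ixv](x)-c\|=\|z_{m}-c\|\le\|z_{0}-c\|=\|x-c\|$, which is the asserted bound.

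For the equality characterization I would argue the two implications separately. If $\ixv\in\IvecsEpsAllInvolving{x}$, then $d(x,C_{\ixv_{r}})<\epsilon$ for every $r$; since $d(x,C_{\ixv_{1}})=\|x-P_{\ixv_{1}}(x)\|<\epsilon$, the definition of $P_{\ixv_{1}}^{\epsilon}$ forces $z_{1}=P_{\ixv_{1}}^{\epsilon}(x)=x$, and an immediate induction (if $z_{r-1}=x$ then, as $d(x,C_{\ixv_{r}})<\epsilon$, the operator $P_{\ixv_{r}}^{\epsilon}$ fixes it, so $z_{r}=x$) shows $z_{r}=x$ for all $r$. Hence $P^{\epsilon}[\ixv](x)=x$ and equality holds trivially. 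Conversely, if $\ixv\notin\IvecsEpsAllInvolving{x}$, i.e.\ $\ixv\in\IvecsEpsNotInvolving{x}$, then Corollary~\ref{cor-upperbound5} gives $\|P^{\epsilon}[\ixv](x)-c\|^{2}\le\|x-c\|^{2}-\epsilon^{2}$; as $\epsilon>0$ this is the \emph{strict} inequality $\|P^{\epsilon}[\ixv](x)-c\|<\|x-c\|$, so equality fails. Together these establish that equality holds if and only if $\ixv\in\IvecsEpsAllInvolving{x}$.

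The one point that needs care—and the only place where the argument is not a purely mechanical iteration—is that the equality clause of part~2 of Lemma~\ref{lem-triv-bound5} at step $r$ is phrased in terms of the \emph{intermediate} point $z_{r-1}$ (namely $d(z_{r-1},C_{\ixv_{r}})<\epsilon$), whereas $\IvecsEpsAllInvolving{x}$ is defined through distances from the \emph{original} point $x$. I sidestep having to compare these two conditions step by step: in the ``if'' direction I show directly that every $z_{r}$ equals $x$ (so the intermediate and original distances literally coincide), and in the ``only if'' direction I appeal to the already-proved Corollary~\ref{cor-upperbound5} rather than tracking which individual step is strict. This keeps the proof short and avoids the only genuine subtlety.
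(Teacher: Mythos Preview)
Your argument is correct and is exactly the kind of routine chaining the paper intends: the corollary is stated without proof (just \qedsymbol) immediately after Lemma~\ref{lem-triv-bound5} and Corollary~\ref{cor-upperbound5}, so the expected justification is precisely to iterate part~2 of the lemma along the components of $\ixv$ and invoke Corollary~\ref{cor-upperbound5} for the strict direction. Your careful handling of the intermediate-versus-original-point issue in the equality clause is appropriate and matches what the reader would need to check.
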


\begin{proof}[Proof of Lemma~\ref{lem-triv-bound5}] Assertion 1
follows from Proposition~\ref{fact-one}, because if $d(x,C_{i})\geq\epsilon$
then $P_{i}^{\epsilon}(x)=P_{i}(x)$.

If $d(x,C_{i})<\epsilon$, then $\|P_{i}^{\epsilon}(x)-c\|=\|x-c\|$
(because $P_{i}^{\epsilon}(x)=x$) and so assertion 2 is true. If
$d(x,C_{i})\geq\epsilon$, then assertion 1 implies $\|P_{i}^{\epsilon}(x)-c\|<\|x-c\|$
and so assertion 2 is again true. \end{proof}

Using Corollary~\ref{cor-triv-bound5}, we show:

\begin{lemma} \label{lem-bound5} Let $c$ be a point in $\bigcap\mathcal{C}$.
Then: 
\begin{alignat}{2}
\|x_{\epsilon}^{k+1}-c\| & \ \leq\ \|x_{\epsilon}^{k}-c\|+\|\bm{v}^{k}\| & \quad & \text{for any iteration number \ensuremath{k}.}\\
\|x_{\epsilon}^{k}-c\| & \ \leq\ \|x_{\epsilon}^{0}-c\|+{\textstyle {\sum_{\ell=0}^{k-1}\|\bm{v}^{\ell}\|}} & \quad & \text{for any iteration number \ensuremath{k}.}\label{eq:2nd-leq}
\end{alignat}
\end{lemma}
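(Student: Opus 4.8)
The plan is to obtain the first inequality directly from the defining recursion~\eqref{algEpsPerturb2}, and then get the second one by telescoping. Fix an iteration number $k$ and a point $c\in\bigcap\mathcal{C}$, and write $y^{k}=\sum_{\ixv\in\Ivecs}w^{k}(\ixv)P^{\epsilon}[\ixv](x_{\epsilon}^{k})$, so that $x_{\epsilon}^{k+1}=y^{k}+\bm{v}^{k}$. The triangle inequality immediately gives $\|x_{\epsilon}^{k+1}-c\|\leq\|y^{k}-c\|+\|\bm{v}^{k}\|$, so the first main step is to show $\|y^{k}-c\|\leq\|x_{\epsilon}^{k}-c\|$. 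Since $w^{k}$ has finite support, is nonnegative, and satisfies $\sum_{\ixv\in\Ivecs}w^{k}(\ixv)=1$, the vector $y^{k}-c$ equals the convex combination $\sum_{\ixv\in\Ivecs}w^{k}(\ixv)\bigl(P^{\epsilon}[\ixv](x_{\epsilon}^{k})-c\bigr)$, whence $\|y^{k}-c\|\leq\sum_{\ixv\in\Ivecs}w^{k}(\ixv)\,\|P^{\epsilon}[\ixv](x_{\epsilon}^{k})-c\|$ by the triangle inequality (and positive homogeneity of the norm). Now Corollary~\ref{cor-triv-bound5} bounds each term by $\|P^{\epsilon}[\ixv](x_{\epsilon}^{k})-c\|\leq\|x_{\epsilon}^{k}-c\|$, and using $\sum_{\ixv\in\Ivecs}w^{k}(\ixv)=1$ once more yields $\|y^{k}-c\|\leq\|x_{\epsilon}^{k}-c\|$. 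Combining this with the triangle-inequality bound above gives the first asserted inequality $\|x_{\epsilon}^{k+1}-c\|\leq\|x_{\epsilon}^{k}-c\|+\|\bm{v}^{k}\|$.

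For the second inequality I would induct on $k$ (equivalently, telescope the first inequality over iterations $0,\dots,k-1$). For $k=0$ it holds trivially, the right side being $\|x_{\epsilon}^{0}-c\|$ and the empty sum being $0$. Assuming it holds for $k$, the first inequality gives $\|x_{\epsilon}^{k+1}-c\|\leq\|x_{\epsilon}^{k}-c\|+\|\bm{v}^{k}\|\leq\|x_{\epsilon}^{0}-c\|+\sum_{\ell=0}^{k-1}\|\bm{v}^{\ell}\|+\|\bm{v}^{k}\|=\|x_{\epsilon}^{0}-c\|+\sum_{\ell=0}^{k}\|\bm{v}^{\ell}\|$, completing the induction.

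I do not anticipate any genuine obstacle here: the only substantive input is Corollary~\ref{cor-triv-bound5}, and the rest is the triangle inequality together with the normalization $\sum_{\ixv\in\Ivecs}w^{k}(\ixv)=1$. The single point that warrants a little care is that the convexity bound on $\|y^{k}-c\|$ should be derived directly from the triangle inequality and homogeneity of the norm, rather than invoked via~\eqref{Jensen}, since that inequality was stated for the operators $P[\ixv]$ and for the squared distance; here we are working with the operators $P^{\epsilon}[\ixv]$ and only need the norm itself.
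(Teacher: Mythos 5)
Your proposal is correct and follows essentially the same route as the paper: the first inequality via the triangle inequality, convexity of the norm (expressed through the convex-combination bound), Corollary~\ref{cor-triv-bound5}, and the normalization $\sum_{\ixv\in\Ivecs}w^{k}(\ixv)=1$; the second by telescoping. The paper's proof is identical in substance, invoking convexity of $z\mapsto\|z-c\|$ where you unwind it by hand.
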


\begin{proof} The first inequality holds because 
\begin{alignat*}{2}
\|x_{\epsilon}^{k+1}-c\| & \ =\ {\textstyle {\left\Vert \left(\sum_{\ixv\in\Ivecs}w^{k}(\ixv)P^{\epsilon}[\ixv](x_{\epsilon}^{k})\right)+\bm{v}^{k}-c\right\Vert }}\\
 & \ \leq\ \|\bm{v}^{k}\|+{\textstyle {\left\Vert \left(\sum_{\ixv\in\Ivecs}w^{k}(\ixv)P^{\epsilon}[\ixv](x_{\epsilon}^{k})\right)-c\right\Vert }} & \qquad & \text{by the triangle inequality}\\
 & \ \leq\ \|\bm{v}^{k}\|+{\textstyle {\sum_{\ixv\in\Ivecs}w^{k}(\ixv)\|P^{\epsilon}[\ixv](x_{\epsilon}^{k})-c\|}} & \qquad & \text{as\,\ \ensuremath{z\mapsto\|z-c\|}\,\,\ is convex}\\
 & \ \leq\ \|\bm{v}^{k}\|+{\textstyle {\sum_{\ixv\in\Ivecs}w^{k}(\ixv)\|x_{\epsilon}^{k}-c\|}} & \qquad & \text{by Corollary~\ref{cor-triv-bound5}}\\
 & \ =\ \|\bm{v}^{k}\|+\|x_{\epsilon}^{k}-c\| & \qquad & \text{as \ensuremath{{\textstyle {\sum_{\ixv\in\Ivecs}w^{k}(\ixv)}=1}}}
\end{alignat*}
The second inequality follows from the first. \end{proof}

In the sequel $M_{\star}$ will denote the nonnegative number defined
by 
\[
{\textstyle {M_{\star}=2(d(x_{\epsilon}^{0},\bigcap\mathcal{C})+\sum_{k=0}^{\infty}\|\bm{v}^{k}\|})}
\]
We observe that $M_{\star}$ depends only on the seed point $x_{\epsilon}^{0}$,
the collection $\mathcal{C}$, and the sum $\sum_{k=0}^{\infty}\|\bm{v}^{k}\|$
of the lengths of the perturbation vectors.

\begin{lemma} \label{lem:M-bound} Let $\ixv$ be any index vector
and $k$ any iteration number. Then: 
\[
\|P^{\epsilon}[\ixv](x_{\epsilon}^{k})+\bm{v}^{k}-x_{\epsilon}^{k}\|\leq M_{\star}
\]
\end{lemma}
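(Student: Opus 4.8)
The plan is to fix an arbitrary point $c\in\bigcap\mathcal{C}$, insert it by the triangle inequality, and then bound each resulting piece using the contraction-type estimate already proved for the operators $P^{\epsilon}[\ixv]$ (Corollary~\ref{cor-triv-bound5}) together with the growth bound for the iterates $x_{\epsilon}^{k}$ (Lemma~\ref{lem-bound5}).

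First I would write
\[
\|P^{\epsilon}[\ixv](x_{\epsilon}^{k})+\bm{v}^{k}-x_{\epsilon}^{k}\|\ \le\ \|P^{\epsilon}[\ixv](x_{\epsilon}^{k})-c\|+\|c-x_{\epsilon}^{k}\|+\|\bm{v}^{k}\|
\]
by the triangle inequality. By Corollary~\ref{cor-triv-bound5} the first term on the right is at most $\|x_{\epsilon}^{k}-c\|$, so the whole right-hand side is at most $2\|x_{\epsilon}^{k}-c\|+\|\bm{v}^{k}\|$. Next I would apply inequality~(\ref{eq:2nd-leq}) of Lemma~\ref{lem-bound5}, namely $\|x_{\epsilon}^{k}-c\|\le\|x_{\epsilon}^{0}-c\|+\sum_{\ell=0}^{k-1}\|\bm{v}^{\ell}\|$, to obtain
\[
\|P^{\epsilon}[\ixv](x_{\epsilon}^{k})+\bm{v}^{k}-x_{\epsilon}^{k}\|\ \le\ 2\|x_{\epsilon}^{0}-c\|+2{\textstyle\sum_{\ell=0}^{k-1}}\|\bm{v}^{\ell}\|+\|\bm{v}^{k}\|\ \le\ 2\|x_{\epsilon}^{0}-c\|+2{\textstyle\sum_{\ell=0}^{\infty}}\|\bm{v}^{\ell}\|,
\]
where the last step uses $2\sum_{\ell=0}^{k-1}\|\bm{v}^{\ell}\|+\|\bm{v}^{k}\|\le 2\sum_{\ell=0}^{k}\|\bm{v}^{\ell}\|\le 2\sum_{\ell=0}^{\infty}\|\bm{v}^{\ell}\|$. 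Finally I would take the infimum over all $c\in\bigcap\mathcal{C}$; since $\bigcap\mathcal{C}$ is a nonempty closed subset of $\R^{n}$, this infimum equals $d(x_{\epsilon}^{0},\bigcap\mathcal{C})$, and the right-hand side becomes exactly $M_{\star}$.

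There is no substantive obstacle here: every inequality used is one already established in the excerpt, and the only point needing a little care is the bookkeeping of the perturbation norms — specifically, noticing that the leftover term $\|\bm{v}^{k}\|$ is harmlessly absorbed by enlarging the partial sum $\sum_{\ell=0}^{k-1}$ to the full tail $\sum_{\ell=0}^{\infty}$, which is precisely why the factor $2$ occurs in the definition of $M_{\star}$.
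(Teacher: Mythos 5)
Your proof is correct and follows essentially the same route as the paper's: the same insertion of a point $c\in\bigcap\mathcal{C}$ via the triangle inequality, the same appeals to Corollary~\ref{cor-triv-bound5} and to inequality~(\ref{eq:2nd-leq}) of Lemma~\ref{lem-bound5}, and the same final passage to the infimum over $c$ to recover $M_{\star}$. No gaps.
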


\begin{proof} For every $c\in\bigcap\mathcal{C}$ we have that: 
\begin{alignat*}{2}
\|P^{\epsilon}[\ixv](x_{\epsilon}^{k})+\bm{v}^{k}-x_{\epsilon}^{k}\| & \ \leq\|\bm{v}^{k}\|+\|P^{\epsilon}[\ixv](x_{\epsilon}^{k})-c\|+\|c-x_{\epsilon}^{k}\| & \qquad & \text{by the triangle inequality}\\
 & \ \leq\ \|\bm{v}^{k}\|+\|x_{\epsilon}^{k}-c\|+\|c-x_{\epsilon}^{k}\| & \qquad & \text{by Corollary~\ref{cor-triv-bound5}}\\
 & \ =\ \|\bm{v}^{k}\|+2\|x_{\epsilon}^{k}-c\|\\
 & \ \leq\ \|\bm{v}^{k}\|+2(\|x_{\epsilon}^{0}-c\|+{\textstyle {\sum_{\ell=0}^{k-1}\|\bm{v}^{\ell}\|)}} & \qquad & \text{by Lemma~\ref{lem-bound5}}\\
 & \ \leq\ 2(\|x_{\epsilon}^{0}-c\|+{\textstyle {\sum_{\ell=0}^{\infty}\|\bm{v}^{\ell}\|)}}
\end{alignat*}
Hence $\|P^{\epsilon}[\ixv](x_{\epsilon}^{k})+\bm{v}^{k}-x_{\epsilon}^{k}\|\leq\inf_{c\in\mathcal{C}}2(\|x_{\epsilon}^{0}-c\|+\sum_{\ell=0}^{\infty}\|\bm{v}^{\ell}\|)=M_{\star}$.
\end{proof}

The sums $\sum_{\ixv\in\IvecsEpsAllInvolving{x_{\epsilon}^{k}}}w^{k}(\ixv)$
and $\sum_{\ixv\in\IvecsEpsNotInvolving{x_{\epsilon}^{k}}}w^{k}(\ixv)$
will be much used in the sequel. These are respectively the sum of
the weights at iteration $k$ of those operators $P^{\epsilon}[\ixv]$
for which $P^{\epsilon}[\ixv](x_{\epsilon}^{k})=x_{\epsilon}^{k}$,
and the sum of the weights at iteration $k$ of those operators $P^{\epsilon}[\ixv]$
for which $P^{\epsilon}[\ixv](x_{\epsilon}^{k})\neq x_{\epsilon}^{k}$.
(As the functions $w^{0},w^{1},w^{2},\dots$ need not be fixed in
advance, $w^{k}$ may be chosen after computing $P^{\epsilon}[\ixv](x_{\epsilon}^{k})$
for various index vectors $\ixv$, in which case it would be quite
natural to choose $w^{k}$ so that either $w^{k}(\ixv)=0$ for all
$\ixv\in\IvecsEpsAllInvolving{x_{\epsilon}^{k}}$ and $\sum_{\ixv\in\IvecsEpsNotInvolving{x_{\epsilon}^{k}}}w^{k}(\ixv)=1$,
or $x_{\epsilon}^{k+1}=x_{\epsilon}^{k}$. However, we will not assume
this.)

For any iteration numbers $r_{1}<r_{2}$ and any $s$ in $\R^{n}$,
the next lemma gives an upper bound for $\|x_{\epsilon}^{r_{2}}-x_{\epsilon}^{r_{1}}\|$
in terms of $\sum_{k=r_{1}}^{r_{2}-1}\sum_{\ixv\in\IvecsEpsNotInvolving{x_{\epsilon}^{k}}}w^{k}(\ixv)$.

\begin{lemma} \label{lem-newer} Let $r_{1}$ and $r_{2}$ be iteration
numbers such that $r_{1}<r_{2}$, and let $s\in\R^{n}$. Then we have
that $\|x_{\epsilon}^{r_{2}}-x_{\epsilon}^{r_{1}}\|\leq{\textstyle {\sum_{k=r_{1}}^{r_{2}-1}\|\bm{v}^{k}\|+M_{\star}\sum_{k=r_{1}}^{r_{2}-1}\sum_{\ixv\in\IvecsEpsNotInvolving{x_{\epsilon}^{k}}}w^{k}(\ixv)}}$.
\end{lemma}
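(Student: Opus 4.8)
The plan is to reduce the claim to a single-iteration estimate and then sum it over $k$ from $r_1$ to $r_2-1$ by the triangle inequality. (The point $s$ in the statement does not in fact enter the asserted bound; it is named only because the surrounding argument that uses this lemma will refer to it.)

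The key step is to show that for every iteration number $k$ one has
\[
\|x_\epsilon^{k+1}-x_\epsilon^k\|\ \leq\ \|\bm{v}^k\|\ +\ M_\star\sum_{\ixv\in\IvecsEpsNotInvolving{x_\epsilon^k}}w^k(\ixv).
\]
To obtain this I would first rewrite, using $\sum_{\ixv\in\Ivecs}w^k(\ixv)=1$,
\[
x_\epsilon^{k+1}-x_\epsilon^k\ =\ \bm{v}^k\ +\ \sum_{\ixv\in\Ivecs}w^k(\ixv)\bigl(P^\epsilon[\ixv](x_\epsilon^k)-x_\epsilon^k\bigr),
\]
and then split the sum over $\IvecsEpsAllInvolving{x_\epsilon^k}$ and $\IvecsEpsNotInvolving{x_\epsilon^k}$. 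For $\ixv\in\IvecsEpsAllInvolving{x_\epsilon^k}$ we have $P^\epsilon[\ixv](x_\epsilon^k)=x_\epsilon^k$: applying the definition of $P_j^\epsilon$ inductively along the components of $\ixv$, each factor $P_{\ixv_j}^\epsilon$ of the composite fixes $x_\epsilon^k$ because $\|x_\epsilon^k-P_{\ixv_j}(x_\epsilon^k)\|=d(x_\epsilon^k,C_{\ixv_j})<\epsilon$ (this is precisely the description of $\IvecsEpsAllInvolving{x_\epsilon^k}$ recorded in the paragraph preceding the lemma). Hence the $\IvecsEpsAllInvolving{x_\epsilon^k}$ terms vanish, and after one use of the triangle inequality it remains only to bound $\|P^\epsilon[\ixv](x_\epsilon^k)-x_\epsilon^k\|$ by $M_\star$ for the index vectors that survive. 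This is where Corollary~\ref{cor-triv-bound5} and Lemma~\ref{lem-bound5} are used: for any $c\in\bigcap\mathcal{C}$,
\[
\|P^\epsilon[\ixv](x_\epsilon^k)-x_\epsilon^k\|\ \leq\ \|P^\epsilon[\ixv](x_\epsilon^k)-c\|+\|c-x_\epsilon^k\|\ \leq\ 2\|x_\epsilon^k-c\|\ \leq\ 2\Bigl(\|x_\epsilon^0-c\|+\sum_{\ell=0}^\infty\|\bm{v}^\ell\|\Bigr),
\]
using Corollary~\ref{cor-triv-bound5} for the second inequality and the second inequality of Lemma~\ref{lem-bound5} for the third; taking the infimum over $c\in\bigcap\mathcal{C}$ gives $\|P^\epsilon[\ixv](x_\epsilon^k)-x_\epsilon^k\|\leq M_\star$. (This is the computation used for Lemma~\ref{lem:M-bound}, but with the $\bm{v}^k$ term dropped.) Substituting this bound into the split sum yields the displayed one-step estimate. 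To finish, I would telescope: by the triangle inequality $\|x_\epsilon^{r_2}-x_\epsilon^{r_1}\|\leq\sum_{k=r_1}^{r_2-1}\|x_\epsilon^{k+1}-x_\epsilon^k\|$, and inserting the one-step estimate into each term gives exactly the inequality in the statement.

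I do not expect a genuine obstacle; the argument is a standard telescoping estimate. The two things that need attention are (a) checking that the composite operator $P^\epsilon[\ixv]$ really does fix $x_\epsilon^k$ when $\ixv\in\IvecsEpsAllInvolving{x_\epsilon^k}$, so that the ``all-involving'' part of the weighted average contributes nothing to $x_\epsilon^{k+1}-x_\epsilon^k$; and (b) deriving the clean bound $\|P^\epsilon[\ixv](x_\epsilon^k)-x_\epsilon^k\|\leq M_\star$ instead of the weaker $M_\star+\|\bm{v}^k\|$ that would come straight from Lemma~\ref{lem:M-bound} --- keeping the extra $\|\bm{v}^k\|$ out of that factor is exactly what makes the telescoped sum match the stated form.
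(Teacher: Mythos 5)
Your proposal is correct and follows essentially the same route as the paper: a one-step bound $\|x_\epsilon^{k+1}-x_\epsilon^k\|\leq\|\bm{v}^k\|+M_\star\sum_{\ixv\in\IvecsEpsNotInvolving{x_\epsilon^k}}w^k(\ixv)$ obtained by splitting the weighted sum over $\IvecsEpsAllInvolving{x_\epsilon^k}$ and $\IvecsEpsNotInvolving{x_\epsilon^k}$, followed by telescoping with the triangle inequality. The only (immaterial) difference is bookkeeping: the paper keeps $\bm{v}^k$ inside the norm and invokes Lemma~\ref{lem:M-bound} directly, whereas you extract $\bm{v}^k$ first and rederive the bound $\|P^\epsilon[\ixv](x_\epsilon^k)-x_\epsilon^k\|\leq M_\star$ by the same computation.
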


\begin{proof} For any iteration number $k$, it follows from the
convexity of $z\mapsto\|z+u\|$ (for any $u\in\R^{n}$), the fact
that $P^{\epsilon}[\ixv](x_{\epsilon}^{k})=x_{\epsilon}^{k}$ when
$\ixv\in\IvecsEpsAllInvolving{x_{\epsilon}^{k}}$, and Lemma~\ref{lem:M-bound}
that: 
\begin{alignat*}{2}
\|x_{\epsilon}^{k+1}-x_{\epsilon}^{k}\| & \ =\ {\textstyle {\left\Vert \left(\sum_{\ixv\in\Ivecs}w^{k}(\ixv)P^{\epsilon}[\ixv](x_{\epsilon}^{k})\right)+\bm{v}^{k}-x_{\epsilon}^{k}\right\Vert }\notag}\\
 & \ \leq\ {\textstyle {\sum_{\ixv\in\Ivecs}w^{k}(\ixv)\|P^{\epsilon}[\ixv](x_{\epsilon}^{k})+\bm{v}^{k}-x_{\epsilon}^{k}\|}} &  & \text{\quad as\,\ \ensuremath{z\mapsto\|z+\bm{v}^{k}-x_{\epsilon}^{k}\|}\,\,\ is convex}\\
 & \ =\ {\textstyle {\sum_{\ixv\in\IvecsEpsAllInvolving{x_{\epsilon}^{k}}}w^{k}(\ixv)\|P^{\epsilon}[\ixv](x_{\epsilon}^{k})+\bm{v}^{k}-x_{\epsilon}^{k}\|}\notag}\\
 & \qquad\quad+\ {\textstyle {\sum_{\ixv\in\IvecsEpsNotInvolving{x_{\epsilon}^{k}}}w^{k}(\ixv)\|P^{\epsilon}[\ixv](x_{\epsilon}^{k})+\bm{v}^{k}-x_{\epsilon}^{k}\|}\notag}\\
 & \ \leq\ {\textstyle {\sum_{\ixv\in\IvecsEpsAllInvolving{x_{\epsilon}^{k}}}w^{k}(\ixv)\|\bm{v}^{k}\|}+{\textstyle {\sum_{\ixv\in\IvecsEpsNotInvolving{x_{\epsilon}^{k}}}w^{k}(\ixv)M_{\star}}}} &  & \text{\quad by Lemma~\ref{lem:M-bound}}\\
 & \ \leq\ \|\bm{v}^{k}\|\ +{\textstyle {\sum_{\ixv\in\IvecsEpsNotInvolving{x_{\epsilon}^{k}}}}w^{k}(\ixv)M_{\star}} &  & \text{\quad as \ensuremath{{\textstyle {\sum_{\ixv\in\IvecsEpsAllInvolving{x_{\epsilon}^{k}}}w^{k}(\ixv)}\leq1}}}
\end{alignat*}
The lemma follows from this and the triangle inequality. \end{proof}

\begin{lemma} \label{lem-upperbound6} Let $c$ be any point in $\bigcap\mathcal{C}$.
Then there exists a number $A$ that is independent of $k$ such that
${\|x_{\epsilon}^{k+1}-c\|^{2}\leq\|x_{\epsilon}^{k}-c\|^{2}-\epsilon^{2}\sum_{\ixv\in\IvecsEpsNotInvolving{x_{\epsilon}^{k}}}w^{k}(\ixv)+A\|\bm{v}^{k}\|}\,\,\text{for every iteration number \ensuremath{k}}$.\end{lemma}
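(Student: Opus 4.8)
\emph{Proof plan for Lemma~\textup{\ref{lem-upperbound6}}.} Set $y^{k}=\sum_{\ixv\in\Ivecs}w^{k}(\ixv)P^{\epsilon}[\ixv](x_{\epsilon}^{k})$, so that $x_{\epsilon}^{k+1}=y^{k}+\bm{v}^{k}$ by \textup{(\ref{algEpsPerturb2})}. The plan is first to obtain the desired bound for $\|y^{k}-c\|^{2}$ (i.e.\ the unperturbed step) and then to absorb the perturbation $\bm{v}^{k}$ into the term $A\|\bm{v}^{k}\|$ by a crude cross-term estimate, using a uniform-in-$k$ bound on $\|x_{\epsilon}^{k}-c\|$ that is available precisely because $\sum_{k}\|\bm{v}^{k}\|$ converges.

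For the first part, since $z\mapsto\|z-c\|^{2}$ is convex and $\sum_{\ixv\in\Ivecs}w^{k}(\ixv)=1$, I would apply Jensen's inequality to get $\|y^{k}-c\|^{2}\le\sum_{\ixv\in\Ivecs}w^{k}(\ixv)\|P^{\epsilon}[\ixv](x_{\epsilon}^{k})-c\|^{2}$, then split the sum according to whether $\ixv\in\IvecsEpsAllInvolving{x_{\epsilon}^{k}}$ or $\ixv\in\IvecsEpsNotInvolving{x_{\epsilon}^{k}}$. For index vectors of the first type Corollary~\ref{cor-triv-bound5} gives $\|P^{\epsilon}[\ixv](x_{\epsilon}^{k})-c\|^{2}\le\|x_{\epsilon}^{k}-c\|^{2}$, and for those of the second type Corollary~\ref{cor-upperbound5} gives $\|P^{\epsilon}[\ixv](x_{\epsilon}^{k})-c\|^{2}\le\|x_{\epsilon}^{k}-c\|^{2}-\epsilon^{2}$. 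Adding these and using $\sum_{\ixv\in\Ivecs}w^{k}(\ixv)=1$ once more yields
\[
\|y^{k}-c\|^{2}\ \le\ \|x_{\epsilon}^{k}-c\|^{2}-\epsilon^{2}\sum_{\ixv\in\IvecsEpsNotInvolving{x_{\epsilon}^{k}}}w^{k}(\ixv);
\]
in particular $\|y^{k}-c\|\le\|x_{\epsilon}^{k}-c\|$. For the second part, expand $\|x_{\epsilon}^{k+1}-c\|^{2}=\|y^{k}-c\|^{2}+2(y^{k}-c)\cdot\bm{v}^{k}+\|\bm{v}^{k}\|^{2}$ and bound the last two terms by $(2\|y^{k}-c\|+\|\bm{v}^{k}\|)\|\bm{v}^{k}\|$. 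By Lemma~\ref{lem-bound5}, $\|x_{\epsilon}^{k}-c\|\le B$ for all $k$, where $B=\|x_{\epsilon}^{0}-c\|+\sum_{\ell=0}^{\infty}\|\bm{v}^{\ell}\|<\infty$; hence $\|y^{k}-c\|\le B$ and $\|\bm{v}^{k}\|\le B$, so the displayed inequality above gives $\|x_{\epsilon}^{k+1}-c\|^{2}\le\|x_{\epsilon}^{k}-c\|^{2}-\epsilon^{2}\sum_{\ixv\in\IvecsEpsNotInvolving{x_{\epsilon}^{k}}}w^{k}(\ixv)+3B\|\bm{v}^{k}\|$, so the lemma holds with $A=3B$.

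There is no genuine obstacle here; the only point that needs attention is that $A$ must be independent of $k$, and this is exactly what forces the use of the hypothesis $\sum_{k}\|\bm{v}^{k}\|<\infty$ — through Lemma~\ref{lem-bound5} it supplies the uniform bound $B$ on $\|x_{\epsilon}^{k}-c\|$. (It is harmless that $A$ depends on $c$, since the statement only requires independence from $k$; if desired one can instead use the bound $\tfrac12 M_{\star}\ge\|x_{\epsilon}^{k}-c\|$ after first taking $c$ nearly optimal, to make $A$ depend only on the data $x_{\epsilon}^{0}$, $\mathcal{C}$, and $\sum_{k}\|\bm{v}^{k}\|$.)
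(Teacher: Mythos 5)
Your proposal is correct and follows essentially the same route as the paper: Jensen's inequality for $z\mapsto\|z-c\|^{2}$, the split of the weighted sum over $\IvecsEpsAllInvolving{x_{\epsilon}^{k}}$ versus $\IvecsEpsNotInvolving{x_{\epsilon}^{k}}$ handled by Corollaries~\ref{cor-triv-bound5} and \ref{cor-upperbound5}, and the uniform-in-$k$ bound from Lemma~\ref{lem-bound5} to absorb the perturbation into $A\|\bm{v}^{k}\|$. The only difference is cosmetic: you peel off $\bm{v}^{k}$ before averaging and expand the cross term explicitly, whereas the paper keeps $\bm{v}^{k}$ inside the averaged squares and uses the triangle inequality termwise, yielding the slightly smaller constant $A=2B$ instead of your $3B$, which is immaterial.
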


\begin{corollary} \label{cor:bound on rate of decrease} Let $c$
be any point in $\bigcap\mathcal{C}$, Then there exists a number
$A$ that is independent of $\kappa$ such that ${\|x_{\epsilon}^{\kappa}-c\|^{2}\leq\|x_{\epsilon}^{0}-c\|^{2}-\epsilon^{2}\sum_{k=0}^{\kappa-1}\sum_{\ixv\in\IvecsEpsNotInvolving{x_{\epsilon}^{k}}}w^{k}(\ixv)+A\sum_{k=0}^{\infty}\|\bm{v}^{k}\|}$~~for
every iteration number $\kappa$. \hfill{}\qedsymbol \end{corollary}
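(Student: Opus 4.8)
The plan is to write $x_{\epsilon}^{k+1}=u^{k}+\bm{v}^{k}$, where $u^{k}$ denotes the weighted average $\sum_{\ixv\in\Ivecs}w^{k}(\ixv)P^{\epsilon}[\ixv](x_{\epsilon}^{k})$, first bound $\|u^{k}-c\|^{2}$ by exactly the right-hand side of the asserted inequality minus the $A\|\bm{v}^{k}\|$ term, and then show that passing from $u^{k}$ to $x_{\epsilon}^{k+1}$ costs at most a $k$-independent constant times $\|\bm{v}^{k}\|$.

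For the first bound I would apply Jensen's inequality to the convex function $z\mapsto\|z-c\|^{2}$, exactly as in the derivation of (\ref{fin-alg}), to obtain $\|u^{k}-c\|^{2}\leq\sum_{\ixv\in\Ivecs}w^{k}(\ixv)\|P^{\epsilon}[\ixv](x_{\epsilon}^{k})-c\|^{2}$, and then split the sum over $\Ivecs$ into the part indexed by $\IvecsEpsAllInvolving{x_{\epsilon}^{k}}$ and the part indexed by $\IvecsEpsNotInvolving{x_{\epsilon}^{k}}$. On the first part each summand equals $\|x_{\epsilon}^{k}-c\|^{2}$, because $P^{\epsilon}[\ixv](x_{\epsilon}^{k})=x_{\epsilon}^{k}$ when $\ixv\in\IvecsEpsAllInvolving{x_{\epsilon}^{k}}$; on the second part Corollary~\ref{cor-upperbound5} bounds each summand by $\|x_{\epsilon}^{k}-c\|^{2}-\epsilon^{2}$. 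Since $\sum_{\ixv\in\Ivecs}w^{k}(\ixv)=1$, this gives $\|u^{k}-c\|^{2}\leq\|x_{\epsilon}^{k}-c\|^{2}-\epsilon^{2}\sum_{\ixv\in\IvecsEpsNotInvolving{x_{\epsilon}^{k}}}w^{k}(\ixv)$.

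Next I would expand $\|x_{\epsilon}^{k+1}-c\|^{2}=\|u^{k}-c\|^{2}+2(u^{k}-c)\cdot\bm{v}^{k}+\|\bm{v}^{k}\|^{2}$ and, by Cauchy--Schwarz, bound the last two terms by $(2\|u^{k}-c\|+\|\bm{v}^{k}\|)\|\bm{v}^{k}\|$. The one point that needs care---and really the only one---is that $2\|u^{k}-c\|+\|\bm{v}^{k}\|$ is bounded by a constant independent of $k$: applying Corollary~\ref{cor-triv-bound5} termwise (exactly as in the proof of Lemma~\ref{lem:M-bound}) gives $\|u^{k}-c\|\leq\|x_{\epsilon}^{k}-c\|$, Lemma~\ref{lem-bound5} bounds $\|x_{\epsilon}^{k}-c\|$ by $\|x_{\epsilon}^{0}-c\|+\sum_{\ell=0}^{\infty}\|\bm{v}^{\ell}\|$, this sum is finite by hypothesis, and $\|\bm{v}^{k}\|\leq\sum_{\ell=0}^{\infty}\|\bm{v}^{\ell}\|$ as well. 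Taking $A=2\|x_{\epsilon}^{0}-c\|+3\sum_{\ell=0}^{\infty}\|\bm{v}^{\ell}\|$ (manifestly independent of $k$) and combining the two estimates yields the lemma. I do not expect a genuine obstacle here; the only thing to avoid is re-deriving a bound on $\|u^{k}-c\|$ from scratch at each step instead of invoking the uniform bounds already in hand.
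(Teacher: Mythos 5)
Your argument is correct and is essentially the paper's route: you re-derive the one-step decrease estimate of Lemma~\ref{lem-upperbound6} (with a uniform constant $A$ obtained from Corollary~\ref{cor-triv-bound5} and Lemma~\ref{lem-bound5}, and the $\epsilon^{2}$ drop from Corollary~\ref{cor-upperbound5}), differing only cosmetically in that you isolate $u^{k}$ and expand $\|u^{k}-c+\bm{v}^{k}\|^{2}$ with Cauchy--Schwarz instead of applying convexity of $z\mapsto\|z+\bm{v}^{k}-c\|^{2}$ and the triangle inequality inside the weighted sum. The only thing to add is the final (trivial) step the corollary actually asserts, which the paper likewise leaves implicit: sum the one-step bound over $k=0,\dots,\kappa-1$ and use $\sum_{k=0}^{\kappa-1}\|\bm{v}^{k}\|\leq\sum_{k=0}^{\infty}\|\bm{v}^{k}\|$.
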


\begin{corollary} \label{cor:convergence4} The series $\sum_{k=0}^{\infty}\sum_{\ixv\in\IvecsEpsNotInvolving{x_{\epsilon}^{k}}}w^{k}(\ixv)$
converges. \hfill{}\qedsymbol \end{corollary}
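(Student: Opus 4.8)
The plan is to obtain Corollary~\ref{cor:convergence4} directly from Corollary~\ref{cor:bound on rate of decrease} by the standard principle that a bounded nondecreasing sequence of real numbers converges. First I would fix any point $c\in\bigcap\mathcal{C}$; such a point exists because $\bigcap\mathcal{C}\neq\emptyset$ by our standing assumptions. Applying Corollary~\ref{cor:bound on rate of decrease} with this $c$ and rearranging to isolate the double sum gives, for every iteration number $\kappa$,
\[
\epsilon^{2}\sum_{k=0}^{\kappa-1}\sum_{\ixv\in\IvecsEpsNotInvolving{x_{\epsilon}^{k}}}w^{k}(\ixv)\ \leq\ \|x_{\epsilon}^{0}-c\|^{2}-\|x_{\epsilon}^{\kappa}-c\|^{2}+A\sum_{k=0}^{\infty}\|\bm{v}^{k}\|.
\]

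Next I would discard the nonpositive term $-\|x_{\epsilon}^{\kappa}-c\|^{2}$ on the right, which is legitimate since $\|x_{\epsilon}^{\kappa}-c\|^{2}\geq 0$, to obtain
\[
\epsilon^{2}\sum_{k=0}^{\kappa-1}\sum_{\ixv\in\IvecsEpsNotInvolving{x_{\epsilon}^{k}}}w^{k}(\ixv)\ \leq\ \|x_{\epsilon}^{0}-c\|^{2}+A\sum_{k=0}^{\infty}\|\bm{v}^{k}\|.
\]
The right-hand side is a finite constant that does not depend on $\kappa$: the number $A$ is independent of $\kappa$ by Corollary~\ref{cor:bound on rate of decrease}, and $\sum_{k=0}^{\infty}\|\bm{v}^{k}\|$ is finite by the hypothesis of the Fifth Convergence Theorem that $\sum_{k=0}^{\infty}\|\bm{v}^{k}\|$ converges. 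Dividing through by $\epsilon^{2}>0$ therefore yields a uniform upper bound, valid for all $\kappa$, on the partial sums $\sum_{k=0}^{\kappa-1}\sum_{\ixv\in\IvecsEpsNotInvolving{x_{\epsilon}^{k}}}w^{k}(\ixv)$.

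Finally I would note that each summand $\sum_{\ixv\in\IvecsEpsNotInvolving{x_{\epsilon}^{k}}}w^{k}(\ixv)$ is nonnegative, being a sum of the nonnegative weights $w^{k}(\ixv)$, so the sequence of partial sums indexed by $\kappa$ is nondecreasing. A nondecreasing sequence of real numbers that is bounded above has a finite limit, and hence the series $\sum_{k=0}^{\infty}\sum_{\ixv\in\IvecsEpsNotInvolving{x_{\epsilon}^{k}}}w^{k}(\ixv)$ converges, as claimed. I do not anticipate any genuine difficulty here, since the substantive work has already been carried out in establishing Corollary~\ref{cor:bound on rate of decrease}; what remains is only the bookkeeping of using $\epsilon>0$, the nonnegativity of the weights, and the convergence of $\sum_{k=0}^{\infty}\|\bm{v}^{k}\|$ to invoke the bounded-monotone-sequence principle.
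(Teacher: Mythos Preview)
Your proposal is correct and matches the paper's approach: the paper presents Corollary~\ref{cor:convergence4} as an immediate consequence of Corollary~\ref{cor:bound on rate of decrease} (indicated by the \qedsymbol\ after the statement), and your argument supplies precisely the routine details of that deduction.
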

\begin{proof}
[Proof of Lemma~\ref{lem-upperbound6}] Let $A=2(\|x_{\epsilon}^{0}-c\|+{\textstyle {\sum_{\ell=0}^{\infty}\|\bm{v}^{\ell}\|})}$,
and let $k$ be any iteration number. Then: 
\begin{alignat}{2}
\|x_{\epsilon}^{k+1}-c\|^{2} & \ =\ {\textstyle {\left\Vert \left(\sum_{\ixv\in\Ivecs}w^{k}(\ixv)P^{\epsilon}[\ixv](x_{\epsilon}^{k})\right)+\bm{v}^{k}-c\right\Vert ^{2}}\notag}\nonumber \\
 & \ \leq\ {\textstyle {\sum_{\ixv\in\Ivecs}w^{k}(\ixv)\|P^{\epsilon}[\ixv](x_{\epsilon}^{k})+\bm{v}^{k}-c\|^{2}}} & \ \  & \text{as\,\ \ensuremath{z\mapsto\|z+\bm{v}^{k}-c\|^{2}}\,\,\ is convex}\nonumber \\
 & \ \leq\ {\textstyle {\sum_{\ixv\in\Ivecs}w^{k}(\ixv)(\|\bm{v}^{k}\|+\|P^{\epsilon}[\ixv](x_{\epsilon}^{k})-c\|)^{2}}} & \ \  & \text{by the triangle inequality}\nonumber \\
 & \ =\ {\textstyle {\sum_{\ixv\in\Ivecs}w^{k}(\ixv)(\|\bm{v}^{k}\|^{2}+2\|\bm{v}^{k}\|\|P^{\epsilon}[\ixv](x_{\epsilon}^{k})-c\|)}\notag}\nonumber \\
 & \qquad+{\textstyle {\sum_{\ixv\in\Ivecs}w^{k}(\ixv)\|P^{\epsilon}[\ixv](x_{\epsilon}^{k})-c\|^{2}}}\label{eq:47}
\end{alignat}
The first of the two sums on the right side of (\ref{eq:47}) satisfies
\begin{alignat}{2}
{\textstyle {\sum_{\ixv\in\Ivecs}w^{k}(\ixv)}(\|\bm{v}^{k}\|^{2}} & +2\|\bm{v}^{k}\|\|P^{\epsilon}[\ixv](x_{\epsilon}^{k})-c\|)\nonumber \\
 & \ \leq\ {\textstyle {\sum_{\ixv\in\Ivecs}w^{k}(\ixv)(\|\bm{v}^{k}\|^{2}+2\|\bm{v}^{k}\|\|x_{\epsilon}^{k}-c\|})} & \quad & \text{by Corollary~\ref{cor-triv-bound5}}\nonumber \\
 & \ =\ \|\bm{v}^{k}\|^{2}+2\|\bm{v}^{k}\|\|x_{\epsilon}^{k}-c\| & \quad & \text{as \ensuremath{{\textstyle {\sum_{\ixv\in\Ivecs}w^{k}(\ixv)}=1}}}\nonumber \\
 & \ =\ (\|\bm{v}^{k}\|+2\|x_{\epsilon}^{k}-c\|)\|\bm{v}^{k}\|\nonumber \\
 & \ \leq\ \left(\|\bm{v}^{k}\|+2(\|x_{\epsilon}^{0}-c\|+{\textstyle {\sum_{\ell=0}^{k-1}\|\bm{v}^{\ell}\|})}\right)\|\bm{v}^{k}\| & \quad & \text{by (\ref{eq:2nd-leq})}\nonumber \\
 & \ \leq\ A\|\bm{v}^{k}\|\label{eq:48}
\end{alignat}
The second of the two sums on the right side of (\ref{eq:47}) satisfies
\begin{alignat*}{2}
{\textstyle {\sum_{\ixv\in\Ivecs}w^{k}(\ixv)\|P^{\epsilon}[\ixv](x_{\epsilon}^{k})-c\|^{2}}} & \ =\ {\textstyle {\sum_{\ixv\in\IvecsEpsAllInvolving{x_{\epsilon}^{k}}}w^{k}(\ixv)\|P^{\epsilon}[\ixv](x_{\epsilon}^{k})-c\|^{2}}\notag}\\
 & \qquad+{\textstyle {\sum_{\ixv\in\IvecsEpsNotInvolving{x_{\epsilon}^{k}}}w^{k}(\ixv)\|P^{\epsilon}[\ixv](x_{\epsilon}^{k})-c\|^{2}}\notag}\\
 & \ \leq\ {\textstyle {\sum_{\ixv\in\IvecsEpsAllInvolving{x_{\epsilon}^{k}}}w^{k}(\ixv)\|x_{\epsilon}^{k}-c\|^{2}}\notag}\\
 & \qquad+{\textstyle {\sum_{\ixv\in\IvecsEpsNotInvolving{x_{\epsilon}^{k}}}w^{k}(\ixv)(\|x_{\epsilon}^{k}-c\|^{2}-\epsilon^{2})}} & \ \  & \text{by Corollary~\ref{cor-upperbound5}}\\
 & \ =\ {\textstyle {\|x_{\epsilon}^{k}-c\|^{2}-\epsilon^{2}\sum_{\ixv\in\IvecsEpsNotInvolving{x_{\epsilon}^{k}}}w^{k}(\ixv)}} & \ \  & \text{as \ensuremath{{\textstyle {\sum_{\ixv\in\Ivecs}w^{k}(\ixv)=1}}}}
\end{alignat*}
The lemma follows from this, (\ref{eq:47}), and (\ref{eq:48}). 
\end{proof}

\subsubsection*{Completion of the Proof of the Fifth Convergence Theorem}

As $\sum_{k=0}^{\infty}\sum_{\ixv\in\IvecsEpsNotInvolving{x_{\epsilon}^{k}}}w^{k}(\ixv)$
converges (by Corollary~\ref{cor:convergence4}), it follows from
Lemma~\ref{lem-newer} that $x_{\epsilon}^{0},x_{\epsilon}^{1},x_{\epsilon}^{2},\dots$
is a Cauchy sequence and must therefore converge.

Let $p$ be the limit of the sequence, and let $j$ be any index such
that $d(p,C_{j})>\epsilon$. To complete the proof of the theorem,
we show that the series $\sum_{k=0}^{\infty}\sum_{\ixv\in\Ivecs\langle j\rangle}w^{k}(\ixv)$
converges.

Since $p$ is the limit of $x_{\epsilon}^{0},x_{\epsilon}^{1},x_{\epsilon}^{2},\dots$,
we see from the definition of $j$ that $d(x_{\epsilon}^{k},C_{j})>\epsilon$
for all sufficiently large iteration numbers $k$. Hence $\Ivecs\langle j\rangle\subseteq\IvecsEpsNotInvolving{x_{\epsilon}^{k}}$
for all sufficiently large $k$. So the fact that $\sum_{k=0}^{\infty}\sum_{\ixv\in\IvecsEpsNotInvolving{x_{\epsilon}^{k}}}w^{k}(\ixv)$
converges implies $\sum_{k=0}^{\infty}\sum_{\ixv\in\Ivecs\langle j\rangle}w^{k}(\ixv)$
also converges.\hfill{}\qedsymbol

\section{Discussion and Conclusion}

For any infinite sequence of closed convex sets $(C_{i})_{i=1}^{\infty}$
in $\R^{n}$ such that $\bigcap_{i=1}^{\infty}C_{i}\neq\emptyset$,
a \emph{string-averaging algorithm} based on the corresponding projection
operators $(P_{i}:\R^{n}\rightarrow C_{i})_{i=1}^{\infty}$ generates
a sequence of iterates $(x^{k})_{k=0}^{\infty}$ in which each iterate
$x^{k+1}$ is a weighted average of the results of applying certain
finite sequences of those projections (i.e., certain finite sequences
of $P_{i}$s) to $x^{k}$. This paper addresses the problem of finding
sufficient conditions (on $(C_{i})_{i=1}^{\infty}$ and/or on the
averaging weights and the sequences of $P_{i}$s that are used) for
$(x^{k})_{k=0}^{\infty}$ to converge to a point in $\bigcap{}_{i=1}^{\infty}C_{i}$.

In most prior work, string-averaging algorithms have been defined
for a \emph{finite} sequence $(C_{i})_{i=1}^{K}$ of closed convex
sets rather than an infinite sequence. (Such algorithms can be understood
as algorithms for the infinite sequence $(C_{i})_{i=1}^{\infty}$
if we define $C_{i}=\R^{n}$ for all $i>K$.) In this finite case
it is known\textemdash and follows from \cite[Thm. 4.1]{ReZa16}\textemdash that
the iterates $(x^{k})_{k=0}^{\infty}$ converge to a point in $\bigcap_{i=1}^{K}C_{i}$
if the three hypotheses stated as H1 \textendash{} H3 in sec. \ref{subsec:Outline}
are all satisfied. 

The main focus of this paper is on the (countably) infinite case\textemdash i.e.,
the case where there are infinitely many indices $i$ for which $C_{i}\neq\R^{n}$.
In this case it is impossible to satisfy H3, and the hypotheses H1
and H2 would be quite restrictive. Without assuming any of these additional
hypotheses, we have proved three theorems that give sufficient conditions
for the iterates $(x^{k})_{k=0}^{\infty}$ to converge and for the
iterates to converge to a point in $\bigcap_{i=1}^{\infty}C_{i}$. 

From these results we have deduced a fourth theorem that generalizes
the first three theorems to sequences of perturbed iterates for which
the sum of the magnitudes of the perturbations converges. Such perturbed
iterates are produced when an algorithm that generates $(x^{k})_{k=1}^{\infty}$
is superiorized \cite{Cens15,CHJ17} to improve its efficacy. The
fourth theorem implies that those algorithms whose iterates are shown
by our first three theorems to converge to a point in $\bigcap_{i=1}^{\infty}C_{i}$
are also resilient to bounded perturbations, so that they are candidates
for superiorization. When there are only finitely many indices $i$
for which $C_{i}\neq\R^{n}$ and the above-mentioned hypotheses H1
\textendash{} H3 are satisfied, our fourth theorem is a known result
that follows from \cite[Thm. 4.5]{ReZa16}.

Our fifth theorem deals with (possibly perturbed) sequences of iterates
computed by algorithms that use operators $(P_{i}^{\epsilon})_{i=1}^{\infty}$
rather than the operators $(P_{i})_{i=1}^{\infty}$, where $P_{i}^{\epsilon}$
is defined as follows for any $\epsilon>0$: 
\[
P_{i}^{\epsilon}(x)=\begin{cases}
P_{i}(x)\  & \text{if \ensuremath{\|x-P_{i}(x)\|\geq\epsilon}}\\
x & \text{if \ensuremath{\|x-P_{i}(x)\|<\epsilon}}
\end{cases}
\]
Assuming that the sum of the magnitudes of any perturbations converges,
this theorem implies that if the averaging weights satisfy a mild
condition\textemdash namely that, for every index $i$ such that $C_{i}\neq\R^{n}$,
the sum (over all iterations) of the weights of the index vectors
that contain index $i$ is infinite\textemdash then the iterates converge
to a point that is within distance $\epsilon$ of each of the $C_{i}$s.
We can gradually reduce $\epsilon$\textemdash e.g., as explained
at the end of sec.~\ref{subsec:Outline}\textemdash to obtain a sequence
of iterates that can be shown (using Lemma~\ref{lem-bound5}) to
converge to a point in $\bigcap_{i=1}^{\infty}C_{i}$ if the sum of
the magnitudes of any perturbations used in computing this sequence
converges.

\section*{Acknowledgments}

The authors are grateful to a referee for detailed and helpful suggestions
regarding the presentation of the material of sec.~\ref{sec:Fourth-Convergence-Theorem}.
We are also grateful to Elias Helou for drawing our attention to the
fact that one of our mathematical claims needed an additional hypothesis,
and to Yair Censor, Touraj Nikazad, and two of the referees for bringing
a number of references to our attention and for corrections to certain
of our references.

\end{document}